\newcommand*{\rom}[1]{\expandafter\@slowromancap\romannumeral #1@}
\theoremstyle{definition}
\newtheorem{fact}{fact}
\newtheorem{thm}[fact]{Theorem}
\newtheorem{lemma}[fact]{Lemma}
\newtheorem{prop}[fact]{Proposition}
\newtheorem{corollary}[fact]{Corollary}
\newtheorem{defini}[fact]{Definition}
\title{Infinite Time Recognizability from Generic Oracles and the Recognizable Jump Operator}
\author{Merlin Carl}
\date{}
\begin{document}

\maketitle

\begin{abstract}
By a theorem of Sacks, if a real $x$ is recursive relative to all elements of a set of positive Lebesgue measure, $x$ is recursive. This statement, and the analogous statement for non-meagerness instead of positive Lebesgue measure, 
have been shown to carry over to many models of transfinite computations in \cite{CS}. Here, we start exploring another analogue concerning recognizability rather than computability. For a notion of relativized
recognizability (introduced in \cite{Ca1} for ITRMs and generalized here to various other machine types), we show that, for Infinite Time Turing Machines (ITTMs),
if a real $x$ is recognizable relative to all elements of a non-meager Borel set $Y$, then $x$ is recognizable. We also show that a relativized version of this statement holds for Infinite Time Register Machines (ITRMs). 
This extends our work in \cite{Ca3} where we obtained the (unrelativized) result for ITRMs. We then introduce a jump operator for recognizability, 
examine its set-theoretical content and show that the recognizable jumps
for ITRMs and ITTMs are primitive-recursively equivalent, even though these two models are otherwise of vastly different strength. Finally, we introduce degrees of recognizability by considering the transitive closure
of relativized recognizability and connect it with the recognizable jump operator to obtain a solution to Post's problem for degrees of recognizability.

\end{abstract}

\section{Introduction}

It is well-known (see e.g. \cite{BL} or Proposition 2.3 of \cite{Lo}) that, if $x$ is a non-recursive real number, the Turing upper-cone of $x$ is meager. Intuitively, randomly choosing an oracle is not likely to increase the chance of solving
the problem of computing a certain real fixed in advance. 
In a similar spirit, by a theorem of Sacks (see e.g. \cite{DH}), if a real $x$ is recursive relative to all elements of a set $Y$ of positive Lebesgue measure, $x$ is recursive. 
 These statement continue to hold for many machine 
models of infinitary computations as demonstrated in \cite{CS} (for some it is currently still open, while for others it turns out to be
independent of $ZFC$).

Besides computability, there is another way in which an infinitary machine can `determine' a real $x$: $x$ is recognizable if and only if there is a program that halts on all oracles and outputs $1$ when run on the oracle $x$ and otherwise
outputs $0$. Recognizability is known to be a strictly (and in fact much) weaker property than computability. In \cite{Ca1}, a notion of relativized recognizability for ITRMs was considered, resembling computations with oracles.
 This motivates us to ask whether the `random oracles are not informative'-intuition is sufficiently stable to still hold in
this context, i.e. whether recognizability relative to all oracles in some `large' set of reals (i.e. a set of positive Lebesgue measure or a non-meager Borel set) implies recognizability.
In an earlier paper (\cite{Ca3}), we treated the simplest non-trivial case of this question, namely Infinite Time Register Machines (ITRMs) and recognizability from all oracles in a Borel set that is not meager. A strengthening of this result is
proved in section $2$.
A quick inspection of the proof reveals that it makes crucial use of a quite special and convenient property of ITRMs, namely that one can bound the halting time of a program using $n$ registers and running in the oracle $x$
by $\omega_{n+1}^{\text{CK},x}$ from above, an ordinal which in turn has a code computable in the oracle $x$ by an ITRM-program using more registers. As there is, by the work of Hamkins and Lewis (\cite{HL}) a universal Infinite Time Turing Machine (ITTM), we cannot expect this to work for
ITTMs. Hence, we proceed in section $3$ by treating the considerably more delicate case of Infinite Time Turing Machines (ITTMs).

We then strengthen the analogy between computability and recognizability by introducing a jump operator for recognizability and exhibiting some of its basic properties. Finally, we introduce degrees of recognizability and show that,
for ITRMs and ITTMs, there are recognizability degrees strictly between $0$ and its recognizable jump.\\

In particular, we prove the following results: 

\begin{enumerate}
 \item (Theorem \ref{ittrecogcomporstrong}) Assume that $x$ is ITTM-recognizable. Then $x$ is computable or $0^{\prime}_{ITTM}$, the halting real for ITTMs, is ITTM-computable from $x$.
\end{enumerate}
Roughly, this shows that real numbers that are `unique' from the perspective of ITTMs are never incomparable with the halting real for ITTMs.

\begin{enumerate}
\setcounter{enumi}{1}
\item (Theorem \ref{ITTMcomeager}) Let $x$ be uniformly ITTM-recognizable from all elements $y$ of a comeager set $Y$. Then $x$ is ITTM-recognizable.
\end{enumerate}

This is an analogue for ITTM-recognizability of a result by Kleene and Post, according to which the Turing cone above a non-recursive degree is always a meager set (see e.g. Proposition 2.3 of \cite{Lo}). 
By Corollary \ref{relrecogindep}, the analogue for ordinal Turing machines with parameters is independent of ZFC.
(The corresponding statement for ITRMs is the main result of \cite{Ca3}, but also follows from the slightly more general Theorem \ref{ITRMcomeager} below.)

\begin{enumerate}
\setcounter{enumi}{2}

 \item (Theorem \ref{recjumpnonrec}) The recognizable jump $0^{r}$ of $0$ is not recognizable for ITTMs and OTMs without or with a fixed parameter.

 \item (Theorem \ref{intermedrecogdeg}) For $M\in\{\text{ITRM},\text{ITTM}\}$, there is $x$ such that $[0]^{r}_{M}\prec_{M}[x]^{r}_{M}\prec_{M}[0^{r}_{M}]^{r}_{M}$.
\end{enumerate}
This is an analogue of Post's problem for recognizability.

\begin{enumerate}
\setcounter{enumi}{4}
 \item (Theorem \ref{recdeginL}) Let $M\in\{\text{ITRM},\text{ITTM},\text{OTM}\}$. Assume that $V=L$. Then the $M$-recognizability degrees are linearly ordered by the ordering induced by the canonical well-ordering $<_{L}$ of $L$.
\end{enumerate}

Together with Proposition \ref{recdeggenext}, this implies that the degree structure for recognizability is highly dependent on the set-theoretical background. Not even the existence of incomparable degrees (which follows in classical
computability theory from a theorem of Kleene and Post, see e.g. Theorem $1.2$ in chapter VI of \cite{So}) is absolute between transitive class-models of ZFC.

\bigskip

\textbf{Notation}: If $X$ is a set, $\mathfrak{P}(X)$ denotes its power set. 
We fix some natural enumeration $(P_{i}|i\in\omega)$ of the ITRM-programs. For $P$ an ITRM-program, $x\subseteq\omega$ and $i,j\in\omega$, we write $P^{x}(i)\downarrow=j$ for the statement that
$P$, when run in the oracle $x$ with $i$ in its first register and $0$ in all other registers, halts with $j$ in its first register. $P^{x}\downarrow$ abbreviates the statement that the computation of $P$ in the oracle $x$ on the input $0$
halts. The same notation will be used for the other models of computation considered here.

For notions and results on admissible set theory see \cite{Ba} or \cite{Sa}, for descriptive set theory see \cite{Ka}, concerning forcing \cite{Ku}. 
KP denotes Kripke-Platek set theory. $\omega_{i}^{\text{CK},x}$ denotes the $i$th $x$-admissible infinite ordinal, $\omega_{\omega}^{\text{CK},x}:=\text{sup}\{\omega_{i}^{\text{CK},x}:i\in\omega\}$. $\delta$ is the Kronecker symbol, i.e. for
 $x,y\subseteq\omega$, let $\delta(x,y)=1$ if and only if $x=y$ and $\delta(x,y)=0$ otherwise. We say that $A\subseteq[0,1]$ is non-meager if and only if $A$ is Borel and not meager. 
When $(A,\in)$ is a transitive $\in$-structure and $f:\omega\rightarrow A$ is a bijection, then $c:=\{p(i,j):f(i)\in f(j)\}$ is called a code for $(A,\in)$, where $p$ is Cantor's pairing function.

For $M\in\{ITRM, ITTM, OTM\}$, $\text{RECOG}_{M}$ denotes the set of $M$-recognizables (to be defined below).


\section{Infinite Time Register Machines}


Infinite Time Register Machines (ITRMs), introduced in \cite{ITRM} and further studied in \cite{KoMi}, work similarly to the classical unlimited register machines ($URM$s) described in \cite{Cu}. 
In particular, they use finitely many registers each of which can store a single natural number. The difference is that ITRMs use transfinite ordinal running time: The state of an ITRM
at a successor ordinal is obtained as for $URM$s. At limit times, the program line is the inferior limit of the earlier program lines and there is a similar limit rule for the register contents. 
If the inferior limit of the earlier register contents is infinite, the register is reset to $0$.\\

For details on ITRMs, we refer to \cite{Ko}, \cite{KoMi} and \cite{ITRM}. Here, we briefly review some standard notions and facts concerning ITRMs that will be used below.

\begin{defini}
 $x\subseteq\omega$ is ITRM-computable in the oracle $y\subseteq\omega$ if and only if there exists an ITRM-program $P$ such that, for $i\in\omega$, $P$ with oracle $y$ stops for every natural number 
$j$ in its first register at the start of the computation and returns
$1$ if and only if $j\in x$ and otherwise returns $0$. A real ITRM-computable in the empty oracle is simply called ITRM-computable. 
\end{defini}

It is not hard to see that any ITRM-computation either stops or eventually cycles. Moreover, it can be shown (see \cite{KoMi}) that an ITRM-computation eventually cycles if and only if some
state of the computation, consisting of the active program line index $l$ and the register contents $(r_{1},...,r_{n})$ appears at two different times $\alpha<\beta$ such that neither
the active program line index nor any of the register contents drops below their corresponding value at time $\alpha$. This halting criterion can be effectively tested by an ITRM, which leads to the following crucial property
of ITRMs, due to Koepke and Miller:

\begin{thm}{\label{hp}}
Let $\mathbb{P}_{n}$ denote the set of ITRM-programs using at most $n$ registers, and let $(P_{i,n}\mid i\in\omega)$ enumerate $\mathbb{P}_{n}$ in some natural way. 
Then the bounded halting problem $H_{n}^{x}:=\{i\in\omega\mid P_{i,n}^{x}(0)\downarrow\}$ is computable uniformly in the oracle $x$ by an ITRM-program (using more than $n$ registers, of course).\\
\indent
Moroever, if $P\in \mathbb{P}_{n}$, $i\in\omega$, $x\subseteq\omega$ and $P^{x}(i)\downarrow$, then the computation takes less than $\omega_{n+1}^{CK,x}$ many steps.
Consequently, if $P$ is an ITRM-program and $i\in\omega$, $x\subseteq\omega$ are such that $P^{x}(i)\downarrow$, then $P^{x}(i)$ stops in less than $\omega_{\omega}^{CK,x}$ many steps.
\end{thm}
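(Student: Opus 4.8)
The three assertions are best handled separately, with the third falling out of the second; the plan is to prove computability of $H_{n}^{x}$ by direct simulation, to prove the ordinal bound $\omega_{n+1}^{CK,x}$ by induction on $n$ using admissibility theory, and then to read off the last assertion as an immediate corollary. For the first assertion I would build an ITRM-program $Q$ that, on input $i$ in oracle $x$, simulates $P_{i,n}^{x}(0)$ step by step --- this requires $n$ registers for the simulated contents together with a fixed finite block of auxiliary registers for the program counter of $P_{i,n}$, for decoding $i$, and for the bookkeeping described next --- while testing the halting criterion recalled just before the theorem. Concretely, $Q$ maintains one candidate ``snapshot'' consisting of a program line $l$ and register values $(r_{1},\dots,r_{n})$, together with flags recording, since the snapshot was taken, whether the active line or any register content has ever dropped strictly below its snapshot value. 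Whenever such a drop occurs, $Q$ resets the snapshot to the current state and clears the flags; whenever the current state equals the snapshot with all flags still indicating non-decrease, $Q$ has detected a monotone recurrence and halts with output $0$ (so $i\notin H_{n}^{x}$); if instead the simulation reaches a halting state first, $Q$ outputs $1$. The point is that the criterion is local --- it compares the current state to a single recorded snapshot --- so a fixed finite overhead of registers suffices, and correctness is exactly the cycling characterization stated above. Uniformity in $i$ and $x$ is built in, since $i$ merely selects the simulated program and $x$ is passed through as the oracle.

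For the second assertion I would argue by induction on $n$. The heart of the matter is a representation lemma: the step function $t\mapsto(\text{state of }P^{x}(i)\text{ at time }t)$ of an $n$-register computation is uniformly $\Delta_{1}$-definable over $L_{\alpha}[x]$ for every $x$-admissible $\alpha$, so that every proper initial segment of the computation of length below $\omega_{n+1}^{CK,x}$ is an element of, and the whole segment is definable over, $L_{\omega_{n+1}^{CK,x}}[x]$. Granting this, suppose toward a contradiction that $P^{x}(i)$ had not halted by time $\omega_{n+1}^{CK,x}$. Since the liminf rule resets to $0$ any register whose values grow unbounded, the componentwise inferior limit of the state sequence along $\omega_{n+1}^{CK,x}$ is a genuine state $(l,r_{1},\dots,r_{n})$ with finite entries; using admissibility of $\omega_{n+1}^{CK,x}$ one finds a stage $\alpha<\omega_{n+1}^{CK,x}$ at which this liminf state is attained and beyond which no coordinate ever drops below its value there, and a later stage $\beta$ at which the same state recurs. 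The pair $\alpha<\beta$ is precisely a monotone recurrence, so by the halting criterion the computation cycles and never halts, contradicting $P^{x}(i)\downarrow$. Hence the halting time is strictly below $\omega_{n+1}^{CK,x}$. The inductive structure enters in the representation lemma, where passing from $n$ to $n+1$ registers raises the relevant admissibility bound by exactly one level, matched to the single extra register the simulator $Q$ of the first assertion consumes to decide cycling.

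The third assertion is then immediate: an arbitrary ITRM-program uses some finite number $n$ of registers, hence lies in $\mathbb{P}_{n}$, so by the second assertion a halting run stops in fewer than $\omega_{n+1}^{CK,x}<\omega_{\omega}^{CK,x}$ steps. I expect the genuine difficulty to lie entirely in the second assertion --- specifically in making the representation lemma and the ``one extra register costs one admissible level'' bookkeeping precise, since this is exactly where the special arithmetic of ITRMs (finitely many program lines, the liminf reset rule at limits, and the resulting correspondence with iterated $x$-admissibility) must be exploited. The simulation of the first assertion and the corollary of the third are comparatively routine once the cycling criterion and the representation lemma are in hand.
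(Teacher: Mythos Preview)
The paper itself does not prove this; it simply cites Koepke--Miller \cite{KoMi} and notes that their arguments relativize. Your sketch of the first assertion (simulate and watch for a monotone recurrence via a maintained snapshot) and of the third (immediate from the second) are in the right spirit. The second assertion, however, has a genuine gap.

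The problem is the sentence ``using admissibility of $\omega_{n+1}^{CK,x}$ one finds a stage $\alpha<\omega_{n+1}^{CK,x}$ at which this liminf state is attained and beyond which no coordinate ever drops below its value there''. Nothing in your argument actually uses $n$: the representation lemma you state (the step function is $\Delta_{1}$ over every $x$-admissible $L_{\alpha}[x]$) is correct but uniform in $n$, and the remainder of the argument would go through verbatim with $\omega_{1}^{CK,x}$ in place of $\omega_{n+1}^{CK,x}$. That would ``prove'' that every halting $n$-register computation halts before $\omega_{1}^{CK,x}$, which is false for $n\ge 1$ since the bound $\omega_{n+1}^{CK,x}$ is optimal. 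The step that fails is precisely the claim that the componentwise liminf state is \emph{attained} before the limit: each of the $n+1$ coordinates (program line and $n$ registers) hits its own liminf cofinally, but there is no reason they do so simultaneously, so the liminf state may well be new at the limit. The Koepke--Miller argument handles this by an honest induction on $n$ located in the cycle-finding, not in the representation lemma: one coordinate is stabilized (consuming one admissible level), attention is restricted to the club of stages where it sits at its liminf, and the inductive hypothesis is applied to the remaining coordinates over the next admissible. That is where the ``one register per admissible level'' accounting belongs.
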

\begin{proof}
 The corresponding results from \cite{KoMi} easily relativize. 
\end{proof}

\begin{thm}{\label{relITRM}}
 Let $x,y\subseteq\omega$. Then $x$ is ITRM-computable in the oracle $y$ if and only if $x\in L_{\omega_{\omega}^{CK,y}}[y]$. Moroever, there is a function $g:\omega\rightarrow\omega$
such that any $x\in L_{\omega_{n}^{CK,y}}[y]$ is computable in the oracle $y$ by some ITRM-program $P$ using at most $g(n)$ registers.
\end{thm}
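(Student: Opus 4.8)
The plan is to prove both implications by relativizing the Koepke--Miller analysis of ITRM-computability, carrying $y$ along as a fixed oracle throughout, and to extract the register bound $g$ from the quantitative part of Theorem \ref{hp}.

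For the forward direction, suppose $x$ is ITRM-computable in the oracle $y$ by a program $P$ using $n$ registers. For each $i\in\omega$, Theorem \ref{hp} gives that the computation $P^{y}(i)$ halts in fewer than $\omega_{n+1}^{CK,y}$ steps. I would then observe that the snapshot function of such a computation --- assigning to each ordinal time the active line together with the register contents --- is obtained by a $\Sigma$-recursion whose successor clause is the (absolute) $URM$ transition and whose limit clause is the liminf/reset rule. Since $L_{\omega_{n+1}^{CK,y}}[y]$ is $y$-admissible, this recursion can be carried out inside it, so each individual computation is an element of $L_{\omega_{n+1}^{CK,y}}[y]$. By $\Sigma$-collection the halting times are uniformly bounded below $\omega_{n+1}^{CK,y}$, whence the map $i\mapsto P^{y}(i)$, and therefore $x=\{i:P^{y}(i)\downarrow=1\}$, is $\Sigma_{1}$ over $L_{\omega_{n+1}^{CK,y}}[y]$. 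Consequently $x\in L_{\omega_{n+2}^{CK,y}}[y]\subseteq L_{\omega_{\omega}^{CK,y}}[y]$.

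For the converse together with the register bound, I would argue by induction on $n$ that an ITRM can, in the oracle $y$, compute a real coding the structure $(L_{\omega_{n}^{CK,y}}[y],\in)$ using at most $g(n)$ registers for a suitable function $g$. The key engine is the bounded halting problem: by Theorem \ref{hp} the set $H_{k}^{y}$ is computable in $y$ with finitely many registers, and a code for the next $y$-admissible level $L_{\omega_{n+1}^{CK,y}}[y]$ can be produced from a code for $L_{\omega_{n}^{CK,y}}[y]$ by an ITRM that repeatedly consults such a halting problem to test which $\Sigma$-statements, and hence which new elements, are added; this costs only a fixed number of additional registers, which yields the recursion defining $g$. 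Once a code $c$ for $L_{\omega_{n}^{CK,y}}[y]$ is available and $x\in L_{\omega_{n}^{CK,y}}[y]$, the machine locates $x$ in $c$ and, on input $i$, reads off from $c$ whether $i\in x$, so $x$ is ITRM-computable in $y$ with $g(n)$ registers. Taking the union over $n$ gives the equivalence with membership in $L_{\omega_{\omega}^{CK,y}}[y]$.

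The main obstacle is the inductive construction in the converse direction: one must verify that passing from a code for one $y$-admissible level to a code for the next can genuinely be performed by an ITRM with only a bounded increase in register usage, and that every ingredient --- the relativized constructible hierarchy $L[y]$, the coding of transitive structures, and the stage-comparison tests --- relativizes uniformly in $y$. I expect the arguments of \cite{KoMi} to go through essentially verbatim once $y$ is inserted as an oracle, so the work lies in checking uniformity and in bookkeeping the register counts rather than in any new idea.
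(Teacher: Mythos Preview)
Your proposal is correct and aligns with the paper's approach: the paper's proof consists of the single sentence ``This is a relativization of the main results of \cite{Ko},'' and what you have written is precisely a sketch of how that relativization goes, invoking Theorem~\ref{hp} for the halting-time bounds in the forward direction and the inductive computation of codes for the levels $L_{\omega_{n}^{CK,y}}[y]$ in the converse. Your identification of the main work --- checking that the passage from one admissible level to the next is uniform in $y$ with bounded register increase --- is exactly the content that the citation to \cite{Ko} is meant to discharge.
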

\begin{proof}
This is a relativization of the main results of Koepke's 
\cite{Ko}.
\end{proof}

\begin{lemma}{\label{ITRMfeatures}}
There are ITRM-programs $(Q_{n}:n\in\omega)$ and $Q$ such that, for every $x\subseteq\omega$:\\
(1) $Q_{n}^{x}$ computes a real number coding $L_{\omega_{n+1}^{CK,x}+2}[x]$.\\
(2) Given a natural number $n$ and a natural number $m$ coding a finite set $p$ of natural numbers, $Q^{x}(m)$ computes a real number $y\supseteq p$ that is Cohen-generic over $L_{\omega_{n+1}^{CK,x}+1}[x]$.
\end{lemma}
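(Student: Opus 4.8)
The plan is to derive both parts from the ITRM-machinery recorded in Theorems \ref{hp} and \ref{relITRM}; the guiding point is that everything we need to produce lives well below $\omega_\omega^{CK,x}$ and is therefore ITRM-computable from $x$, so the only real work is to hit the levels $\omega_{n+1}^{CK,x}+2$ and $\omega_{n+1}^{CK,x}+1$ on the nose and, for (2), to make the construction uniform in $n$ and $m$.

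For (1), first I would use Theorem \ref{hp} to compute, from the oracle $x$, the bounded halting problem $H_{n+1}^{x}$, which is the resource that drives the relativized constructibility process underlying Theorem \ref{relITRM}: from $H_{n+1}^{x}$ an ITRM can compute a real coding $L_{\beta}[x]$ for ordinals $\beta$ below the next admissible $\omega_{n+2}^{CK,x}$, and in particular can locate $\omega_{n+1}^{CK,x}$ as the relevant admissible and compute a code for $L_{\omega_{n+1}^{CK,x}}[x]$. Since $\omega_{n+1}^{CK,x}+2$ is itself below the limit ordinal $\omega_{n+2}^{CK,x}$, the same process directly yields a code for $L_{\omega_{n+1}^{CK,x}+2}[x]$; equivalently, one extends the $L_{\omega_{n+1}^{CK,x}}[x]$-code by two applications of the definable-power-set (G\"odel) operation, which is uniformly computable on codes of countable transitive $\in$-structures because deciding the subsets given by a fixed formula is arithmetic in the code. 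One may also simply note that such a code is an element of $L_{\omega_{n+2}^{CK,x}}[x]\subseteq L_{\omega_\omega^{CK,x}}[x]$ and invoke the ``moreover'' clause of Theorem \ref{relITRM}, the content of the explicit argument being exactly that this membership is witnessed uniformly in $x$. This defines $P_{n}$, and the construction is in fact uniform in $n$ as well, which is what (2) will need.

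For (2), the idea is to run the construction of (1) inside $Q$ to obtain a real code $c$ for $L_{\omega_{n+1}^{CK,x}+2}[x]$ and then carry out the standard finite-extension construction of a Cohen generic over the \emph{countable} model $L_{\omega_{n+1}^{CK,x}+1}[x]$, using $c$ as our handle on that model. Concretely, from $c$ one identifies the code-point representing $L_{\omega_{n+1}^{CK,x}+1}[x]$, enumerates its elements, and picks out those that are dense subsets $D_{0},D_{1},\dots$ of Cohen forcing $2^{<\omega}$; membership and density tests are arithmetic in $c$ and hence ITRM-computable from $x$. Starting from the finite condition $p$ coded by $m$, I would then extend step by step, at stage $i$ performing a terminating $\omega$-search (terminating since $D_{i}$ is dense) for a condition in $D_{i}$ extending the current one, and let $y$ be the union. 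By construction $y\supseteq p$ is Cohen-generic over $L_{\omega_{n+1}^{CK,x}+1}[x]$; and since $y$ is arithmetic in $c$, it lies in $L_{\omega_\omega^{CK,x}}[x]$, so Theorem \ref{relITRM} guarantees it is ITRM-computable from $x$. As every step is uniform in $n$, $m$ and $x$, a single program $Q$ works.

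The main obstacle is concentrated in (1): pinning down the admissible ordinal $\omega_{n+1}^{CK,x}$ from the halting data and stopping the relativized $L$-recursion at exactly the level $\omega_{n+1}^{CK,x}+2$, uniformly in $n$. Once a code for this level is in hand, part (2) is a routine effective forcing-over-a-countable-model argument, the ``$+2$'' in (1) being precisely what makes the dense sets of the ``$+1$''-model enumerable from the computed code.
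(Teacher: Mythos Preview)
Your argument is correct, and for part (2) it is essentially the paper's: compute the code $c$ from part (1), then run Rasiowa--Sikorski. The only cosmetic difference is that the paper first observes (via a Skolem-hull argument) that $L_{\omega_{n+1}^{CK,x}+1}[x]$ is countable in $L_{\omega_{n+1}^{CK,x}+2}[x]$, so a suitable generic already lives inside the coded structure and can simply be \emph{found} by searching $c$; you instead enumerate the dense sets from $c$ and build the generic externally. Both work.

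For part (1) the routes genuinely diverge. You propose a \emph{direct} construction: compute $H_{n+1}^{x}$, use it to drive the $L[x]$-recursion up to ordinals below $\omega_{n+2}^{CK,x}$, locate $\omega_{n+1}^{CK,x}$ by counting admissibles, and stop at level $+2$. This is valid, but it leans on the internal uniformity of the proof of Theorem \ref{relITRM}, which you invoke rather than verify. The paper instead uses a \emph{search-and-verify} trick that makes uniformity self-evident: by Theorem \ref{relITRM} there is a bound $k$ (depending only on $n$) on the number of registers needed to compute \emph{some} code for $L_{\omega_{n+1}^{CK,x}+2}[x]$; then, uniformly in $x$, one loops through the programs $P_{i,k}$, uses the bounded halting solver of Theorem \ref{hp} to check that $P_{i,k}^{x}$ outputs a real, and tests via the ITRM truth-evaluation machinery whether that real codes an $L_{\beta+2}[x]$ with $L_{\beta}[x]\models\text{KP}$ and exactly $n$ admissible levels below $\beta$. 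Your approach is more constructive and closer to how one would actually implement the machine; the paper's approach buys uniformity in $x$ for free without opening up the proof of Theorem \ref{relITRM}.
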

\begin{proof}
(1) By standard fine-structural considerations, such a code is contained in $L_{\omega_{n+1}^{CK,x}+3}[x]$ and hence computable by some ITRM-program $P_{x}$ by Theorem \ref{relITRM}. Moroever, there is some $k\in\omega$
such that for each $x$, $P_{x}$ uses at most $k$ many registers. To compute a code for $L_{\omega_{n+1}^{CK,x}+2}[x]$ uniformly in the oracle $x$, we search, starting with $i=0$, through $\omega$ in the following way:
Given $i\in\omega$, first determine, using Theorem \ref{hp}, whether $\forall{j\in\omega}P_{i,k}^{x}(j)\downarrow\{0,1\}$, i.e. whether $P_{i,k}^{x}$ computes a real. If so, determine, using the techniques for evaluating first-order predicates with ITRMs 
from the proof of the lost melody theorem for ITRMs in \cite{ITRM}, whether the real computed by $P_{i,k}^{x}$ is a code for a well-founded $\in$-structure of the form $L_{\alpha}[x]$ such that $\alpha$ is of the form $\beta+2$,
$L_{\beta}[x]\models$KP and $L_{\beta}[x]$ contains exactly $n$ elements of the form $L_{\gamma}[x]$ such that $L_{\gamma}[x]\models KP$. If this holds, then a code as desired has been found; otherwise, proceed with $i+1$.
As we observed, some program in $\mathbb{P}_{k}$ computes a code as desired, so this procedure will terminate for some finite value of $i$.\\

(2) As $L_{\omega_{n+1}^{CK,x}+1}[x]$ is isomorphic (via the Levy collapsing map) to its own $\Sigma_{1}$-Skolem hull of $\{x\}$, it follows that $L_{\omega_{n+1}^{CK,x}+1}[x]$ is countable
in $L_{\omega_{n+1}^{CK,x}+2}[x]$. Hence the proof of the Rasiowa-Sikorski-lemma shows that a real extending $p$ and Cohen-generic over $L_{\omega_{n+1}^{CK,x}+1}[x]$ is contained in $L_{\omega_{n+1}^{CK,x}+2}[x]$.
Use the program $P_{n}$ from (1) to compute a real number $c$ coding $L_{\omega_{n+1}^{CK,x}+2}[x]$. Then search through $\omega$ to determine, again using the techniques for evaluating first-order statements with ITRMs, some $i\in\omega$
 that codes a real with the desired properties in $c$. From $i$ and $c$, the desired real is now easily computable.
\end{proof}

We now define relativized recognizability and then proceed with stating and proving our theorem.


\begin{defini}
Let $x,y\subseteq\omega$. We say that $x$ is ITRM-recognizable from $y$, written $x\leq^{r}_{\text{ITRM}}y$, if and only if there is an ITRM-program $P$ such that $P^{z}\downarrow\in\{0,1\}$ 
for every $z\subseteq\omega$ and, for all $z\subseteq\omega$, we have $P^{z\oplus y}\downarrow=\delta(x,z)$.
For a set $Y\subseteq\mathfrak{P}(\omega)$, we say that $x$ is uniformly recognizable from $Y$ if and only if there is an ITRM-program $P$ such that, for every $y\in Y$ and every $z\subseteq\omega$,
 we have $P^{z\oplus y}\downarrow=\delta(z,x)$. In this case, we say that $x$ is recognized from $Y$ via $P$. We say that $x$ is recognizable if and only if $x\leq^{r}_{\text{ITRM}}0$. 
We denote the set of reals recognizable relative to $y\subseteq\omega$ by RECOG$_{y}$ and abbreviate RECOG$_{0}$ by RECOG.
\end{defini}

\textbf{Remark}: As we are only concerned with ITRMs in this section, we will usually drop the prefix `ITRM' here.\\

\textbf{Remark}: The condition that $P^{z}$ stops with output $0$ or $1$ for every input is introduced merely for the sake of the simplification of further arguments; if $P$ is a program using $n$ registers, we can always use 
the solvability of the bounded halting problem for ITRMs using at most $n$ registers given by Theorem \ref{hp} to produce another program $P^{\prime}$ that, given $z\subseteq\omega$, first tests whether $P^{z}\downarrow$ with output
$0$ or $1$ and returns the output of $P^{z}$ if that is the case and otherwise outputs $0$. $P^{\prime x}$ and $P^{x}$ will hence produce the same output wherever the output of $P$ is of the required form and $P^{\prime}$ will satisfy
our extra condition.\\

A typical phenomenon for models of infinitary computations is the existence of reals that are recognizable, but not computable. As computability is easily seen to imply recognizability, it follows that recognizability is a strictly weaker notion than computability.
 This was first shown in \cite{HL} for Infinite Time Turing Machines.
Detailed treatments of recognizability for ITRMs and for infinitary machines in general can be found in \cite{ITRM}, \cite{Ca}, \cite{Ca1}, \cite{Ca2}.\\

We note here that recognizability is computably stable for ITRMs, i.e. preserved under ITRM-computable equivalence:

\begin{defini}
For $x,y\subseteq\omega$, we write $x\equiv_{ITRM}y$ and say that $x$ and $y$ are ITRM-computably equivalent if and only if there are ITRM-programs $P$ and $Q$ such that $P^{x}$ computes $y$ and $Q^{y}$ computes $x$.
\end{defini}

\begin{prop}{\label{recogstable}}
Let $x\equiv_{ITRM}y$ be real numbers. Then $x\in$ RECOG if and only if $y\in$ RECOG.
\end{prop}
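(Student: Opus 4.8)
The plan is to show that recognizability is closed under $\equiv_{ITRM}$ by building, from a recognizer for $x$, an explicit recognizer for $y$. By definition of $\equiv_{ITRM}$ there are ITRM-programs $P,Q$ with $P^{x}$ computing $y$ and $Q^{y}$ computing $x$; since the statement is symmetric in $x$ and $y$ (just interchange $P$ and $Q$), it suffices to prove the implication $x\in\text{RECOG}\Rightarrow y\in\text{RECOG}$. So I would fix a program $R$ recognizing $x$, which by the simplification remark we may assume halts with output in $\{0,1\}$ on every oracle $z\subseteq\omega$ and satisfies $R^{z}\downarrow=\delta(x,z)$. The key observation driving the construction is the uniqueness of ITRM-outputs on a fixed oracle, which gives the equivalence $z=y\iff\bigl(Q^{z}\text{ computes }x\text{ and }P^{x}\text{ computes }z\bigr)$: the backward direction uses that $P^{x}$ computes the single real $y$, so $P^{x}=z$ forces $z=y$, while the forward direction is immediate from $Q^{y}=x$, $R^{x}=1$, $P^{x}=y$.

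The recognizer $S$ for $y$ would, on oracle $z$, first try to read off the real $w$ computed by $Q^{z}$ and simulate $R$ on $w$: whenever the simulated run of $R$ queries bit $j$ of its oracle, $S$ first uses the bounded halting problem of Theorem \ref{hp} to decide whether $Q^{z}(j)\downarrow$; if $Q^{z}(j)$ diverges, or halts with a value outside $\{0,1\}$, then $Q^{z}$ fails to compute a real, so $z\neq y$ and $S$ outputs $0$; otherwise it feeds the bit $Q^{z}(j)$ to the simulation. Because $R$ halts on every oracle, this simulation terminates whenever all queried values $Q^{z}(j)$ are defined, and it then computes $R^{w}$ for the real $w$ determined by those answers. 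If the simulation returns $0$, then $S$ outputs $0$; if it returns $1$, we have learned $w=x$. In that case $S$ verifies the round trip $P^{w}=z$ by searching $j=0,1,2,\dots$, computing each bit $P^{w}(j)$ through the same nested simulation (now answering oracle calls to $w$ by running $Q^{z}$) and comparing it with $z(j)$, outputting $0$ at the first mismatch and $1$ if no mismatch exists.

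Correctness would then follow directly from the equivalence above. If $z=y$, then $Q^{z}=Q^{y}$ computes $x$, every $Q^{z}(j)$ is defined, $w=x$, $R^{w}=1$, and $P^{w}=P^{x}$ computes $y=z$, so no mismatch is found and $S^{z}=1$. Conversely, if $S^{z}=1$, then $Q^{z}$ computes a real $w$ with $R^{w}=1$, forcing $w=x$, and $P^{x}=P^{w}$ computes a real equal to $z$; since $P^{x}$ computes $y$, we get $z=y$. Thus $S^{z}\downarrow=\delta(y,z)$ for all $z$, witnessing $y\in\text{RECOG}$, and the converse implication is obtained by the symmetric construction.

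\textbf{Main obstacle.} I expect the delicate point to be not the logical equivalence but ensuring that $S$ halts on \emph{every} oracle, as recognizability demands. Two issues must be dealt with: for a general $z$ the subcomputations $Q^{z}(j)$ may diverge or return junk, which is why each oracle call is guarded by the bounded halting problem of Theorem \ref{hp}; and the round-trip test $P^{w}=z$ is a universal statement over $\omega$ whose naive bit-by-bit verification would loop forever, so its decision must be delegated to the cycling/halting machinery of Theorem \ref{hp} (equivalently, the predicate-evaluation technique from the Lost Melody Theorem for ITRMs in \cite{ITRM}) rather than to an unbounded search. A secondary, more routine point is that the argument relies on the closure of ITRM-computation under the nested simulation of a program whose oracle calls are themselves resolved by subcomputations; this state-saving simulation is standard for ITRMs and already underlies the solution of the bounded halting problem in \cite{KoMi} and \cite{ITRM}.
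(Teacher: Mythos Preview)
Your proof is correct and follows the same three-step strategy as the paper: given $z$, form $a:=Q^{z}$, test whether $R^{a}\downarrow=1$, and if so verify $P^{a}=z$, invoking the bounded halting solver of Theorem~\ref{hp} to guarantee termination throughout. The only difference is that the paper first checks $Q^{z}(i)\downarrow\in\{0,1\}$ for \emph{every} $i$ before running $R$, whereas you check bits lazily as $R$ requests them; your assertion that the simulation returning $1$ means ``$Q^{z}$ computes a real $w$ with $R^{w}=1$'' then tacitly relies on the easy observation that a recognizer must query every oracle bit along an accepting run (otherwise two distinct oracles would be accepted), which is worth making explicit.
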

\begin{proof}
Assume that $x\in$ RECOG. Let $P$ and $Q$ be ITRM-programs such that $P^{x}\downarrow=y$ and $Q^{y}\downarrow=x$, and let $R$ be a program for recognizing $x$, i.e. such that
$\forall{z\subseteq\omega}R^{z}\downarrow=\delta(x,z)$. To recognize $y$, we proceed as follows: Assume that $z$ is given in the oracle.\\
\indent
Step $1$: Check, using a halting problem solver (see Theorem \ref{hp}) for $Q$, whether $Q^{z}(i)\downarrow$ for all $i\in\omega$. If not, then $z\neq y$, as $Q$ computes $x$ from $y$ and hence $Q^{y}(i)\downarrow$
for every $i\in\omega$. So in that case, output $0$ and stop. Then check whether $Q^{z}(i)\downarrow\in\{0,1\}$ for all $i\in\omega$ by an exhaustive search. If not, then $z\neq y$, again since $Q^{y}\downarrow=x$, so in that case,
output $0$ and stop. Otherwise, proceed with step $2$.\\
\indent
Step $2$: Let $Q^{z}\downarrow=a$. Check whether $R^{a}\downarrow=1$. If not, then $a\neq x$ as $R$ recognizes $x$, and hence $z\neq y$ as $Q^{y}\downarrow=x$. In that case, output $0$ and stop. Otherwise, proceed with step $3$.\\
\indent
Step $3$: At this point, we know that $Q^{z}\downarrow=a=x$. Check whether $P^{a}\downarrow=z$ (using a halting problem solver as in step $1$). If not, then $z\neq y$ as $P^{x}\downarrow=y$. In this case, output $0$ and stop.
Otherwise, $z=y$, so output $1$ and stop.\\
\indent
Hence $x\in$ RECOG implies $y\in$ RECOG. The reverse direction follows analogously.
\end{proof}

\textbf{Remark}: Note that, however, relativized recognizability is not transitive (see \cite{Ca1}).


\begin{lemma}{\label{genhaltingtimes}}
 Let $P$ be an ITRM-program using $n$ registers, let $x\subseteq\omega$ and suppose that $g$ is Cohen-generic over $L_{\omega_{n+1}^{CK,x}+1}[x]$.
Then $\omega_{i}^{CK,x\oplus g}=\omega_{i}^{CK,x}$ for $i\leq n+1$.
Consequently, $P^{x\oplus g}$ halts in less than $\omega_{n+1}^{CK,x}$ many steps or does not halt at all.
\end{lemma}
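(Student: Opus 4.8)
The plan is to reduce the final assertion to the claim about admissible ordinals and then establish that claim by a two-sided forcing argument. The `Consequently' part is immediate once we know $\omega_{n+1}^{CK,x\oplus g}=\omega_{n+1}^{CK,x}$: since $P$ uses $n$ registers, Theorem~\ref{hp} tells us that if $P^{x\oplus g}$ halts at all then it halts in fewer than $\omega_{n+1}^{CK,x\oplus g}$ steps, and by the equality this bound is $\omega_{n+1}^{CK,x}$. Everything therefore comes down to showing that, for $i\le n+1$, the $i$-th infinite $(x\oplus g)$-admissible ordinal equals the $i$-th infinite $x$-admissible ordinal. Writing $\alpha:=\omega_{n+1}^{CK,x}$, it suffices to prove that for $\omega\le\beta\le\alpha$ the ordinal $\beta$ is $x$-admissible if and only if it is $(x\oplus g)$-admissible; granting this, the infinite $x$-admissibles and infinite $(x\oplus g)$-admissibles that are $\le\alpha$ coincide, and since $\alpha$ is the $(n+1)$-th such ordinal on the $x$-side, the two increasing enumerations agree up to index $n+1$.

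For the direction from $x$-admissibility to $(x\oplus g)$-admissibility I would invoke the classical fact (see \cite{Ba}, \cite{Sa}) that set forcing preserves admissibility: if $A$ is admissible, $\mathbb{P}\in A$ is a forcing notion, and $h$ is $\mathbb{P}$-generic over $A$, then $A[h]$ is again admissible. Cohen forcing is coded by an element of $L_{\omega}$ and hence lies in every $L_{\beta}[x]$ with $\beta>\omega$; moreover, genericity of $g$ over $L_{\alpha+1}[x]$ entails genericity of $g$ over each $L_{\beta}[x]$ with $\beta\le\alpha$, since a filter meeting every dense set lying in $L_{\alpha+1}[x]$ a fortiori meets every dense set lying in the smaller model. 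Consequently, for each $x$-admissible $\beta\le\alpha$ the structure $L_{\beta}[x\oplus g]=L_{\beta}[x][g]$ is admissible, i.e.\ $\beta$ is $(x\oplus g)$-admissible. Thus the first $n+1$ $x$-admissibles survive as $(x\oplus g)$-admissibles.

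For the converse — that no new admissibles appear below $\alpha$ — I would argue, this time without using genericity, that $L_{\beta}[x]$ is a $\Sigma_{1}$-definable (with parameter $x$) subclass of $L_{\beta}[x\oplus g]$, because the relative constructible hierarchy $\eta\mapsto L_{\eta}[x]$ is uniformly $\Sigma_{1}$ and $x$ is the even part of $x\oplus g$, hence an element of $L_{\beta}[x\oplus g]$. Suppose $\beta\le\alpha$ were $(x\oplus g)$-admissible but not $x$-admissible. Failure of admissibility of $L_{\beta}[x]$ yields an ordinal $\gamma<\beta$ together with a map $f\colon\gamma\to\beta$ that is cofinal in $\beta$ and $\Sigma_{1}$-definable over $L_{\beta}[x]$ from a parameter in $L_{\beta}[x]$. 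Relativizing the defining $\Sigma_{1}$ formula to the subclass $L_{\beta}[x]$ inside $L_{\beta}[x\oplus g]$ — and using that $\Sigma_{0}$ formulas are absolute between the transitive class $L_{\beta}[x]$ and $L_{\beta}[x\oplus g]$, so that the relativized definition picks out exactly the old values — exhibits the very same $f$ as a $\Sigma_{1}$-definable cofinal map over $L_{\beta}[x\oplus g]$, contradicting its admissibility. Combining the two directions gives that the infinite $x$-admissibles and infinite $(x\oplus g)$-admissibles $\le\alpha$ coincide, as required. The main obstacle is precisely this reflection step: I must ensure the cofinal map stays $\Sigma_{1}$ after transport into $L_{\beta}[x\oplus g]$, which is why it is essential that $L_{\beta}[x]$ be a $\Sigma_{1}$-definable subclass rather than something of higher complexity — were the lifted map only $\Sigma_{2}$, it would no longer contradict KP. On the preservation side the only delicate point is the definability of the Cohen forcing relation inside the admissible ground model $L_{\beta}[x]$, which is standard.
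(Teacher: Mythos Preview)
Your approach is correct and matches the paper's: preservation of admissibility under Cohen forcing shows the first $n+1$ $x$-admissibles remain $(x\oplus g)$-admissible, the easy observation that $(x\oplus g)$-admissible implies $x$-admissible (which you spell out in more detail than the paper) gives the converse, and Theorem~\ref{hp} yields the halting-time bound. Two places where the paper is more careful than your sketch: for the preservation step it invokes Mathias's Theorem~10.11, which requires the generic to meet all $\Sigma_1\cup\Pi_1$-definable dense \emph{classes} of $L_\beta[x]$ rather than merely the dense sets in $L_\beta[x]$ --- your hypothesis of genericity over $L_{\alpha+1}[x]$ does guarantee this since such classes become sets in $L_{\beta+1}[x]\subseteq L_{\alpha+1}[x]$, but the ``classical fact'' as you phrase it is not literally correct for admissible ground models --- and it cites Mathias's Theorem~9.0 to justify the identification $L_\beta[x][g]=L_\beta[x\oplus g]$, which you use without comment.
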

\begin{proof}
By Theorem $10.11$ of \cite{Ma}, if $M$ is admissible, $\mathbb{P}$ is a forcing in $M$ and $G$ is $\mathbb{P}$-generic over $M$ such that $G$ intersects
every subclass of $M$ that is a union of a $\Sigma_{1}(M)$ and a $\Pi_{1}(M)$ class, then $M[G]$ is also admissible.
Clearly, as $L_{\alpha+1}[x]$ contains all subclasses of $L_{\alpha}[x]$ definable over $L_{\alpha}[x]$, we have that,
 when $M$ is of the form $L_{\alpha}[x]$ with $x$-admissible $\alpha$ and $g\subseteq\omega$ Cohen-generic over $L_{\alpha+1}[x]$, then
$L_{\alpha}[x][g]$ is admissible.\\
\indent
As admissible ordinals are indecomposable, it follows from Theorem $9.0$ of \cite{Ma} that the forcing extension $L_{\omega_{i}^{CK,x}}[x][g]$ agrees with
the relativized $L$-level $L_{\omega_{i}^{CK,x}}[x\oplus g]$ for all $i\leq n+1$.
Consequently, if $g$ is as in the assumption of the lemma, then $L_{\omega_{i}^{CK,x}}[x][g]=L_{\omega_{i}^{CK,x}}[x\oplus g]$ is admissible for $i\leq n+1$:
so $\omega_{i}^{CK,x}$ is $x\oplus g$-admissible for $i\leq n+1$. Certainly, every $x\oplus g$-admissible ordinal is also $x$-admissible, so that first $(n+1)$ many
$x$-admissible ordinals agree with the first $(n+1)$ many $x\oplus g$-admissible ordinals. Hence $\omega_{n+1}^{CK,x}=\omega_{n+1}^{CK,x\oplus g}$.\\
\indent
The second claim now follows from Theorem \ref{hp}.
\end{proof}

In \cite{Ca3}, we showed that every real $x$ that is uniformly recognizable from all elements of a non-meager Borel set is recognizable. Here, we demonstrate a relativized version of this result (though the proof pretty much remains the same).

\begin{defini}{\label{extr}}
A real number $z$ is an $r$-extracting real if and only if there are a comeager set of reals $Y$ and a real number $x$ such that $x\not\leq^{r}_{\text{ITRM}}z$, but $x$ is uniformly recognizable from $\{z\}\oplus Y:=\{z\oplus y:y\in Y\}$. 

\end{defini}

Intuitively, a real $x$ is $r$-extracting when addition of a typical oracle to $x$ allows to extract more information than one gets from $x$ itself. We will now show that $r$-extracting reals do not exist; the statement
that recognizability from all oracles in a comeager set implies recognizability is in this language the special case that $0$ is not $r$-extracting. This special case was proved as Theorem $7$ of \cite{Ca3}.

\begin{thm}{\label{ITRMcomeager}}
There are no uniformly $r$-extracting reals. I.e.: If $z\subseteq\omega$, $Y\subseteq[0,1]$ is comeager, and $x\subseteq\omega$ is uniformly recognizable in $\{z\}\oplus Y$, then $x$ is recognizable from $z$.
\end{thm}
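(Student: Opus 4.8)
The plan is to build, from the program $P$ uniformly recognizing $x$ from $\{z\}\oplus Y$, a single program $P'$ witnessing $x\leq^r_{\text{ITRM}}z$ (the case $z=0$ then yields plain recognizability, which in the language of Definition \ref{extr} says that no real is uniformly $r$-extracting). The idea is that the extra oracle $y\in Y$ can be replaced by an \emph{internally generated} Cohen-generic real, which — because $Y$ is comeager — behaves like a typical element of $Y$. Concretely, suppose $P$ uses $n$ registers. On oracle $w\oplus z$, the program $P'$ first invokes the program $Q$ from Lemma \ref{ITRMfeatures}(2) to compute a real $g$ that is Cohen-generic over $L_{\omega_{n+1}^{CK,w\oplus z}+1}[w\oplus z]$, and then simulates $P$ on the oracle $w\oplus(z\oplus g)$, answering each of $P$'s queries to the $g$-part by running $Q$ on demand, and outputs whatever $P$ outputs. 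Since $Q$ halts on every oracle and $P$ halts with output in $\{0,1\}$ on every oracle, $P'$ halts with output in $\{0,1\}$ on every oracle; only finitely many extra registers are needed for the bookkeeping.

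First I would verify that the simulated computation is short. By Lemma \ref{genhaltingtimes} applied with $x:=w\oplus z$, genericity of $g$ over $L_{\omega_{n+1}^{CK,w\oplus z}+1}[w\oplus z]$ gives $\omega_{n+1}^{CK,(w\oplus z)\oplus g}=\omega_{n+1}^{CK,w\oplus z}$, so (noting $w\oplus(z\oplus g)\equiv_{ITRM}(w\oplus z)\oplus g$) the computation $P^{w\oplus(z\oplus g)}$ halts in fewer than $\omega_{n+1}^{CK,w\oplus z}$ steps. Hence the entire run, together with its output bit, lies inside the admissible model $M[g]$, where $M:=L_{\omega_{n+1}^{CK,w\oplus z}}[w\oplus z]$ and $M[g]=L_{\omega_{n+1}^{CK,w\oplus z}}[(w\oplus z)\oplus g]$ by the proof of Lemma \ref{genhaltingtimes}.

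The heart of the argument is to show that the generic $g$ yields the correct answer, i.e. $P^{w\oplus(z\oplus g)}\downarrow=\delta(w,x)$. Write $b:=\delta(w,x)$ and $D_c:=\{g':P^{w\oplus(z\oplus g')}\downarrow=c\}$ for $c\in\{0,1\}$. Because $P$ recognizes $x$ uniformly from $\{z\}\oplus Y$, every $g'\in Y$ lies in $D_b$; as $Y$ is comeager, $D_b$ is comeager and $D_{1-b}$ is meager. Now I would argue by the forcing theorem over the admissible structure $M$: since the halting run is captured in $M[g]$, the statement ``$P^{w\oplus(z\oplus \dot g)}\downarrow=c$'' is decided by Cohen forcing over $M$, so for a generic $g$ there is a condition in the generic filter forcing its output value. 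If some condition $p$ forced the wrong value $1-b$, then, generics being comeager, the relatively comeager set of generics extending $p$ would all land in $D_{1-b}$, making $D_{1-b}$ non-meager inside the basic open set determined by $p$ — contradicting that $D_{1-b}$ is meager. Hence no condition forces $1-b$, so every generic $g$ — in particular the one produced by $Q$ — gives output $b=\delta(w,x)$. Therefore $P'^{\,w\oplus z}\downarrow=\delta(w,x)$ for all $w$, i.e. $x\leq^r_{\text{ITRM}}z$.

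The main obstacle I anticipate is the justification that Cohen forcing over the admissible (but non-ZFC) model $M$ genuinely decides the computation's output, i.e. that the relevant truth/definability lemma holds for $M$. This is exactly where Lemma \ref{genhaltingtimes} is essential: it guarantees that the computation terminates below the height of $M$ and that $M[g]$ remains admissible, so that ``$P$ halts with output $c$'' is a $\Sigma_{1}$-over-$M[g]$ property to which the forcing apparatus for admissible sets applies. The remaining points — the on-demand oracle simulation of $g$ via $Q$ and the associativity $w\oplus(z\oplus g)\equiv_{ITRM}(w\oplus z)\oplus g$ — are routine.
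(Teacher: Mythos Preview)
Your proof is correct and follows the same core strategy as the paper: replace the external oracle $y\in Y$ by an internally computed Cohen generic over $L_{\omega_{n+1}^{CK,w\oplus z}+1}[w\oplus z]$, and use the forcing theorem over the admissible structure together with the comeagerness of $Y$ to argue correctness.

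There is one small but genuine difference worth noting. The paper first fixes, once and for all, a condition $p$ obtained from the actual computation $P^{x\oplus(z\oplus y)}$ (for some $y\in Y$ generic over $L_{\omega_{n+1}^{CK,x\oplus z}+1}[x\oplus z]$) that forces output $1$; this $p$ is hardcoded into the recognizing program, which then always produces a generic extending $p$. The case $a=x$ is then handled directly by the choice of $p$, while the case $a\neq x$ is handled by your forcing/non-meagerness argument. You instead dispense with the hardcoded $p$ entirely and treat both cases symmetrically: for every $w$, no condition can force the wrong bit $1-\delta(w,x)$, since the comeager-in-$[p]$ set of generics extending such a condition would lie in the meager set $D_{1-\delta(w,x)}$. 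This is a clean simplification; it avoids the need to single out $x$ in advance and makes the program slightly more uniform. Conversely, the paper's version makes the positive case $a=x$ completely explicit without appealing to the dichotomy ``some condition forces $0$ or some condition forces $1$'', which you implicitly use when you say the output is decided by a condition in the generic filter. Both routes are valid and rest on the same forcing machinery for admissible sets.
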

\begin{proof}
Suppose that $Y$ is comeager, $z\subseteq\omega$ and $x\subseteq\omega$ is uniformly recognized from all elements of $\{z\}\oplus Y$ by the ITRM-program $P$. Assume that $P$ uses $n$ registers.
Let $C$ be the set of real numbers that are Cohen-generic over $L_{\omega_{n+1}^{\text{CK},x\oplus z}+1}[x\oplus z]$. As $C$ is comeager, so is $Y\cap C$. We can hence assume without loss 
of generality that $Y\subseteq C$. Pick $y\in Y$. By assumption, we have $P^{x\oplus (z\oplus y)}\downarrow=1$. By Lemma \ref{genhaltingtimes}, we have
$\omega_{n+1}^{\text{CK},x\oplus (z\oplus y)}=\omega_{n+1}^{\text{CK},(x\oplus z)\oplus y}=\omega_{n+1}^{\text{CK},x\oplus z}$, so $P^{x\oplus (z\oplus y)}$ runs for $<\omega_{n+1}^{\text{CK},x\oplus z}$ many steps
and hence halts inside $L_{\omega_{n+1}^{\text{CK},x\oplus z}}[x\oplus (z\oplus y)]$. By the forcing theorem for admissible sets (see Lemma 10.10 of \cite{Ma}), there must be some forcing condition $p\subseteq y$
such that $p\Vdash P^{x\oplus (z\oplus y)}\downarrow=1$ over $L_{\omega_{n+1}^{\text{CK},x\oplus z}}[x\oplus z]$. Hence, we have $P^{x\oplus(z\oplus g)}\downarrow=1$ for every $g\supseteq p$ that is Cohen-generic over 
$L_{\omega_{n+1}^{\text{CK},x\oplus z}+1}[x\oplus z]$.

We claim that the following procedure recognizes $x$ relative to $z$: Given a real $a$ in the oracle, compute (a code for) $L_{\omega_{n+1}^{\text{CK},a\oplus z}+1}[a\oplus z]$, using (1) of Lemma \ref{ITRMfeatures}.
From that code, compute a real $g_{a}\supseteq p$ that is Cohen-generic over $L_{\omega_{n+1}^{\text{CK},a\oplus z}+1}[a\oplus z]$, using (2) of Lemma \ref{ITRMfeatures}. Then run $P^{a\oplus(z\oplus g_{a})}$,
which must, by assumption, halt with output $0$ or $1$, and return its output. Let $Q$ be an ITRM-program that carries out this procedure when given the oracle $a\oplus z$.

We need to see that $Q^{a\oplus z}\downarrow=1$ if and only if $a=x$, and otherwise $Q^{a\oplus z}\downarrow=0$. That $Q^{x\oplus z}\downarrow=1$ is clear since the generic $g$ picked in the procedure extends $p$,
which forces $Q^{x\oplus (z\oplus g)}$ to converge to $1$ in $L_{\omega_{n+1}^{\text{CK},x\oplus z}}[x\oplus(z\oplus g)]$ and the computation is absolute between $L_{\omega_{n+1}^{\text{CK},x\oplus z}}[x\oplus(z\oplus g)]$ and
the real world.

Now suppose that $a\neq x$, but that $Q^{a\oplus z}\downarrow=1$. This implies that, for some $g_{a}$ Cohen-generic over $L_{\omega_{n+1}^{\text{CK},a\oplus z}+1}[a\oplus z]$,
we have $P^{a\oplus(z\oplus g_{a})}\downarrow=1$. By the forcing theorem for admissible sets again, there is some forcing condition $q\subseteq g_{a}$ such that
$q\Vdash P^{a\oplus(z\oplus g_{a})}\downarrow=1$. Consequently, $P^{a\oplus(z\oplus \bar{g})}\downarrow=1$ holds for every $\bar{g}\supseteq q$ that is Cohen-generic over
$L_{\omega_{n+1}^{\text{CK},a\oplus z}+1}[a\oplus z]$. But the set $\hat{C}$ of all such $\bar{g}$ is not meager and must hence intersect $Y$; so let $g\in Y\cap\hat{C}$.
Then $P^{a\oplus(z\oplus g)}\downarrow=1$, but $a\neq x$, which contradicts the assumption that $P$ recognizes $x$ from $\{z\}\oplus Y$.
\end{proof}

We can relax the condition of $Y$ being comeager to $Y$ merely being Borel and not meager.

\begin{corollary}{\label{ITRMnonmeager}}
Let $Y\subseteq[0,1]$ be non-meager, $z\subseteq\omega$, and let $x\subseteq\omega$ be uniformly recognizable from $\{z\}\oplus Y$. Then $x\leq^{r}_{\text{ITRM}}z$.
\end{corollary}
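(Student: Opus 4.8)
The plan is to reduce the non-meager case to the comeager case already settled in Theorem \ref{ITRMcomeager} by localizing $Y$ inside a basic clopen set where it is comeager. Since $Y$ is Borel, it has the Baire property, so it is either meager or comeager in some nonempty basic open set; as $Y$ is not meager, there is a finite condition $s$ (a finite partial function from $\omega$ to $\{0,1\}$ with domain an initial segment of length $|s|$) such that $Y$ is comeager in the basic open set $[s]$ of all reals extending $s$, i.e. $[s]\setminus Y$ is meager.

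First I would introduce the shift homeomorphism $\phi:[0,1]\to[s]$ that prepends $s$: $\phi(w)$ extends $s$ on the coordinates below $|s|$ and satisfies $\phi(w)(|s|+j)=w(j)$ for all $j\in\omega$. This $\phi$ is a computable homeomorphism of the full space onto the cylinder $[s]$ with computable inverse, and it induces an isomorphism between Cohen forcing and Cohen forcing below $s$; in particular, for every transitive model $M$ with $s\in M$ (automatic, as $s$ is finite and hence lies in every level $L_{\alpha}[\cdot]$), $w$ is Cohen-generic over $M$ if and only if $\phi(w)$ is. Setting $Y':=\phi^{-1}(Y\cap[s])=\{w:\phi(w)\in Y\}$, the comeagerness of $Y\cap[s]$ in $[s]$ together with $\phi$ being a homeomorphism onto $[s]$ yields that $Y'$ is comeager in $[0,1]$.

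Next I would transfer the recognizing program. Let $P$ uniformly recognize $x$ from $\{z\}\oplus Y$. I define an ITRM-program $P'$ that, on oracle $c\oplus(z\oplus w)$, first splits off $w$, computes $\phi(w)$ (a trivial finite shift, with the fixed string $s$ hard-coded into $P'$), and then simulates $P$ on the oracle $c\oplus(z\oplus\phi(w))$, returning its output. For $w\in Y'$ we have $\phi(w)\in Y$, whence $P^{c\oplus(z\oplus\phi(w))}\downarrow=\delta(c,x)$ for every real $c$, and therefore $P'^{c\oplus(z\oplus w)}\downarrow=\delta(c,x)$; moreover $P'$ halts with output in $\{0,1\}$ on every oracle because, by the earlier WLOG-remark, $P$ may be assumed to do so. Thus $P'$ uniformly recognizes $x$ from $\{z\}\oplus Y'$ with $Y'$ comeager, and Theorem \ref{ITRMcomeager} applied to $Y'$, $z$ and $x$ gives $x\leq^{r}_{\text{ITRM}}z$, as desired.

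The only genuinely non-routine ingredient is the localization step: passing from ``non-meager'' to ``comeager in some $[s]$'' via the Baire property, and verifying that the shift $\phi$ preserves Cohen-genericity so that the comeager hypothesis of Theorem \ref{ITRMcomeager} is actually met by $Y'$. Everything else --- the computability of $\phi$ and its inverse, the splitting of joins, and the hard-coding of the finite condition $s$ into $P'$ --- is routine ITRM bookkeeping, and no genericity or halting-time estimate beyond those already used in Theorem \ref{ITRMcomeager} is needed.
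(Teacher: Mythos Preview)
Your proposal is correct and follows essentially the same route as the paper: localize $Y$ to a basic clopen set $[s]$ where it is comeager, pull back via the prefix shift to obtain a comeager $Y'$, hard-code $s$ into a modified program $P'$, and invoke Theorem~\ref{ITRMcomeager}. The only superfluous detail is your remark that $\phi$ preserves Cohen-genericity: since Theorem~\ref{ITRMcomeager} is applied as a black box and only needs $Y'$ comeager (which you already obtained from $\phi$ being a homeomorphism onto $[s]$), no genericity argument is required at this stage.
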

\begin{proof}
As $Y$ is non-meager, there is an interval $I=(a,b)\subseteq[0,1]$ such that $Y$ is comeager in $I$. By shortening $I$ if necessary, we may assume without loss of generality that $I$ 
is of the form $\{tx|x\in^{\omega}2\}$ for $t\in^{<\omega}2$ (where $tx$ denotes the concatenation of $t$ and $x$) and (passing to $Y\cap I$ if necessary) that $Y\subseteq I$.
 Suppose that $P$ recognizes $x$ relative to all elements of $\{z\}\oplus Y$. We define a program $P^{\prime}$ that recognizes $x$ from all elements of $\{z\}\oplus Y^{\prime}$ where $Y^{\prime}:=\{x:tx\in I\}$, which is obviously a comeager set.
$P^{\prime a\oplus (z\oplus y)}$ works by simply running $P^{a\oplus (z\oplus ty)}$. Clearly, $P^{\prime}$ has the desired properties: For $y\in Y^{\prime}$, we have $P^{\prime a\oplus (z\oplus y)}\downarrow=1$ if and only if $P^{a\oplus (z\oplus ty)}\downarrow=1$
which, as $ty\in Y$ by definition of $Y^{\prime}$, is equivalent with $a=x$. So $P^{\prime}$ recognizes $x$ from all elements of $\{z\oplus Y\}$ for a comeager set $Y$. By Theorem \ref{ITRMcomeager}, $x$ is then uniformly recognizable from $z$.
\end{proof}

We have so far worked with uniform recognizability, i.e. the program recognizing $x$ from $z$ and some $y\in Y$ has to be the same for all elements of $Y$. If one wants to drop this assumption and allow $x$
to be recognized from $y$ by different programs $P$ for different $y\in Y$, the problem arises that the corresponding subsets $Y_{P}^{z}:=\{y\in Y:P$ recognizes $x$ from $z\oplus y\}$ might not have the property
of Baire and hence not be comeager in some interval so that Theorem \ref{ITRMcomeager} is not applicable. At least under some (standard) set-theoretical extra assumption, however, we can strengthen the claim to
drop the uniformity condition:

\begin{corollary}{\label{MAnonuniform}}
Assume that every $\bf\Sigma^{1}_{2}$-set of reals has the Baire property. Let $Y$ be a non-meager set, $x,z\subseteq\omega$ and assume that, for every $y\in Y$, there is some ITRM-program $P_{y}$
such that for all $a\subseteq\omega$, $P_{y}^{a\oplus(z\oplus y)}\downarrow=1$ if and only if $a=x$ and otherwise $P_{y}^{a\oplus(z\oplus y)}\downarrow=0$. Then $x$ is ITRM-recognizable from $z$.
\end{corollary}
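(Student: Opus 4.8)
The plan is to exploit that there are only countably many ITRM-programs, so that the non-uniform hypothesis can be reduced to countably many instances of the uniform situation already handled by Theorem \ref{ITRMcomeager}. First I would use the Remark following the definition of relativized recognizability to replace each $P_{y}$ by a program that halts with output in $\{0,1\}$ on \emph{every} oracle; this does not alter its behaviour on oracles of the form $a\oplus(z\oplus y)$, so the modified program still witnesses recognition of $x$ from $z\oplus y$. Since there are only countably many such total programs $P$, I set $Y_{P}:=\{y\in Y:\forall a\subseteq\omega\ (P^{a\oplus(z\oplus y)}\downarrow=\delta(x,a))\}$ and obtain $Y=\bigcup_{P}Y_{P}$. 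As a countable union of meager sets is meager and $Y$ is non-meager, some $Y_{P_{0}}$ is non-meager.

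The heart of the argument is a descriptive-complexity estimate showing that each $Y_{P}$ lies in $\bf\Pi^{1}_{2}$ (with $x$ and $z$ as real parameters). By Theorem \ref{hp} a halting ITRM-computation using $n$ registers terminates before $\omega_{n+1}^{\text{CK},w}$, and by Theorem \ref{relITRM} and Lemma \ref{ITRMfeatures}(1) the relevant admissible level is coded by a real; since the computation together with its halting state and output value is $\Delta_{1}$ over admissible $L_{\alpha}[w]$, the predicate ``$P^{w}\downarrow=j$'' is equivalent to ``there is a real $c$ coding a well-founded model of the form $L_{\alpha}[w]\models\text{KP}$ in which $P^{w}$ halts with output $j$''. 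Well-foundedness of the coded structure is $\Pi^{1}_{1}(c)$ and the remaining clauses are arithmetic in $(c,w)$, so ``$P^{w}\downarrow=j$'' is $\Sigma^{1}_{2}$ in $w$. For a \emph{total} $P$, ``$P^{w}\downarrow=0$'' is the negation of ``$P^{w}\downarrow=1$'', whence both are $\Delta^{1}_{2}$ in $w$. Consequently the matrix ``$a=x\vee P^{a\oplus(z\oplus y)}\downarrow=0$'' is $\bf\Pi^{1}_{2}$ in $(a,y)$, and since $\bf\Pi^{1}_{2}$ is closed under universal real quantification, the recognition condition cutting out $Y_{P}$ inside the Borel set $Y$ is $\bf\Pi^{1}_{2}$. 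Because the sets with the Baire property form a $\sigma$-algebra closed under complementation, the assumption that every $\bf\Sigma^{1}_{2}$-set has the Baire property yields that every $\bf\Pi^{1}_{2}$-set does as well; in particular $Y_{P_{0}}$ has the Baire property.

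Finally I would convert this into comeagerness and invoke the earlier results. A non-meager set with the Baire property is comeager in some interval $I$: writing $Y_{P_{0}}=U\triangle M$ with $U$ open and $M$ meager, non-meagerness forces $U\neq\emptyset$, and for any interval $I\subseteq U$ one computes $Y_{P_{0}}\cap I=I\setminus(M\cap I)$, which is comeager in $I$. Since the single program $P_{0}$ uniformly recognizes $x$ from $z\oplus y$ for every $y\in Y_{P_{0}}$, I would run the interval-shift reduction from the proof of Corollary \ref{ITRMnonmeager} (pulling $I$ back to a comeager subset $Y'$ of ${}^{\omega}2$ and composing $P_{0}$ with the appropriate prepending of the stem $t$) and then apply Theorem \ref{ITRMcomeager} to conclude $x\leq^{r}_{\text{ITRM}}z$. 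It is harmless that $Y_{P_{0}}$ is merely $\bf\Pi^{1}_{2}$ rather than Borel, since neither the reduction of Corollary \ref{ITRMnonmeager} nor the proof of Theorem \ref{ITRMcomeager} uses Borelness of the parameter set: they use only comeagerness together with the fact that a comeager set meets every non-meager set.

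The main obstacle I expect is the complexity bookkeeping in the second paragraph. The quantifier ``$\forall a$'' in the recognition condition naively pushes the complexity to $\bf\Pi^{1}_{3}$, which the hypothesis cannot control; the decisive point is that passing to total programs makes the halting-and-output predicate $\bf\Delta^{1}_{2}$, so the outer universal real quantifier keeps $Y_{P}$ inside $\bf\Pi^{1}_{2}$. Getting this estimate exactly right — and verifying that the witnessing $L$-level code is uniformly available as in Lemma \ref{ITRMfeatures} and that the computation is absolute to it — is where the real work lies.
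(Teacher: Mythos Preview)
Your proposal is correct and follows essentially the same strategy as the paper: partition $Y$ according to which program does the recognizing, show each piece is $\bf\Pi^{1}_{2}$ so that the Baire-property hypothesis applies, pick a non-meager piece, and feed it into Corollary~\ref{ITRMnonmeager}/Theorem~\ref{ITRMcomeager}. The only difference is in how the $\bf\Pi^{1}_{2}$ bound is obtained: the paper encodes the (countable) halting computation itself by a real $b$, notes that ``$b$ codes the computation of $P$ in the oracle $c$'' is $\Pi^{1}_{1}$, and reads off directly that $\forall a\,\forall b\,(a=x\vee b\text{ does not code the computation})$ is $\Pi^{1}_{2}$ --- whereas you route through admissible-set codes to get $\Sigma^{1}_{2}$ for the halting predicate and then invoke totality to upgrade to $\Delta^{1}_{2}$ before quantifying. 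The paper's route is shorter and does not need the totality trick, but yours is equally valid; your worry about accidentally landing in $\bf\Pi^{1}_{3}$ is well-placed, and both arguments resolve it.
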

\begin{proof}
$Y_{P}^{z}$ is $\Pi^{1}_{2}$ in $x$ and $z$ for every ITRM-program $P$: Namely, the set of these $y$ is definable by a formula $\phi$
expressing `For all $a,b\subseteq\omega$: If $b$ codes the computation of $P$ in the oracle $a\oplus (z\oplus y)$ and this computation stops with output $1$, then $a=x$'. 
(Recall that, by the choice of $P$, the computation of $P$ any oracle always terminates
and hence is a countable set codable by a real.) As `$b$ codes the computation of $P$ in the oracle $c$' is $\Pi_{1}^{1}$ in $c$, the negation is $\Sigma_{1}^{1}$, so the statement
`For all $a,b\subseteq\omega$: $a=x$ or $b$ does not code the computation of $P$ in the oracle $a\oplus (z\oplus y)$', which is equivalent to $\phi$, is $\Pi^{1}_{2}$ in $z$ and $x$.

Thus, for each ITRM-program $P$, $Y_{P}$ has the Baire property (as its complement is $\bf\Sigma^{1}_{2}$ and hence Baire by assumption
and as complements of Baire sets are again Baire). Now $Y=\bigcup_{i\in\omega}Y_{P_{i}}$. As $Y$ is not meager,
it cannot be the union of countably many meager sets. So there is some $k\in\omega$ such that $\bar{Y}:=Y_{P_{k}}$ is not meager. As $\bar{Y}$ also has the Baire property, there is an interval such that $\bar{Y}$ is
comeager relative to that interval. As in the proof of Corollary \ref{ITRMnonmeager}, it follows that $x$ is uniformly ITRM-recognizable from $z\oplus y$ for all elements $y$ of a comeager set of oracles, and hence, by Theorem \ref{ITRMcomeager},
$x$ is ITRM-recognizable from $z$.
\end{proof}

\textbf{Remark}: The assumption that every $\bf\Sigma^{1}_{2}$-set of reals has the Baire property follows for example from the existence of a measurable cardinal (see e.g. Corollary $14.3$ \cite{Ka}).
By Proposition $13.7$ of \cite{Ka}, every $\bf\Sigma_{2}^{1}$-set is a union of $\aleph_{1}$ many Borel sets. By Theorem $2.20$ of chapter II of \cite{Ku}, MA$_{\omega_{1}}$ implies that a union of $\aleph_{1}$ many
meager sets is meager (and hence that a union of $\aleph_{1}$ many sets with the Baire property has the Baire property). As Borel sets have the Baire property, it thus also follows from MA$_{\omega_{1}}$ that
all $\bf\Sigma^{1}_{2}$-sets have the Baire property.

Moreover, the statement that all $\bf\Sigma^{1}_{2}$-sets of reals have the Baire property is equivalent to the statement that the set of reals that are Cohen-generic over $L[x]$ is comeager for every $x\subseteq\omega$ (see Theorem $14.2$ of 
\cite{Ka}).

It well known that $L$ contains $\bf\Sigma^{1}_{2}$-sets of reals that fail to have the Baire property (see e.g. Corollary $13.10$ of \cite{Ka} or observe that in $L$, the set of reals Cohen-generic over $L$ is empty).
 On the other hand, MA$_{\omega_{1}}$ holds in a forcing extension of $L$ (see Theorem $10.11$ of \cite{Ka}). Thus, the statement
that every $\bf\Sigma^{1}_{2}$-set of reals has the Baire property is independent of $ZFC$.


\indent

\textbf{Question}: Is this non-uniform version of Corollary \ref{ITRMnonmeager} is provable in $ZFC$ alone?

\section{Infinite Time Turing Machines}

Since there is in the case of ITTMs no analogue for the stratification of halting times as given by Theorem \ref{hp} for ITRMs, which is a crucial ingredient of the proof of Theorem \ref{ITRMcomeager}, the treatment of ITTMs
will require a new idea. This new idea is that, for a certain $x$ recognizable from many oracles, there is a large subset of the oracles for which the full strength of $\lambda^{x}$ (the supremum of the ITTM-halting times in the oracle $x$) is unnecessary
for performing the recognition: We can limit the `relevant' halting times to a certain ordinal $\alpha<\lambda^{x}$. This serves a similar purpose as the stratification of halting times for ITRMs.


\begin{defini}{\label{ITTMdef}}
Let $x\subseteq\omega$. $\lambda^{x}$ denotes the supremum of ITTM-halting times in the oracle $x$. A real $x$ is ITTM-computable (or ITTM-writable) in the oracle $y$ if and only if there is an ITTM-program $P$ such that $P^{y}$
halts with $x$ on the output tape. A real $x$ is eventually writable in the oracle $y$ if and only if there is an ITTM-program $P$ such that $P^{y}$ does not halt, but eventually leaves the content of the output tape invariant and equal to $x$.
$\zeta^{x}$ denotes the supremum of those ordinals $\alpha$ such that $L_{\alpha+1}[x]\setminus L_{\alpha}[x]$ contains a real that is eventually writable in the oracle $x$.
A real number $x$ is accidentally writable in the oracle $y$ if and only if there is an ITTM-program $P$ such that the tape content of the computation of $P^{y}$ is equal to $x$ at some point of time.
$\Sigma^{x}$ denotes the supremum of those ordinals $\alpha$ such that $L_{\alpha+1}[x]\setminus L_{\alpha}[x]$ contains a real that is accidentally writable in $x$.
\end{defini}

\begin{thm}{\label{ITTMchar}}(Welch)
A real $y$ is ITTM-computable in the oracle $x$ if and only if $y\in L_{\lambda^{x}}[x]$, eventually writable if and only if $y\in L_{\zeta^{x}}[x]$ and accidentally writable if and only if $y\in L_{\Sigma^{x}}[x]$.
Furthermore, $(\lambda^{x},\zeta^{x},\Sigma^{x})$ is the lexically minimal triple $(\alpha,\beta,\gamma)$ of ordinals such that $L_{\alpha}[x]\prec_{\Sigma_{1}}L_{\beta}[x]\prec_{\Sigma_{2}}L_{\gamma}[x]$.
\end{thm}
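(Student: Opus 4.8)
The plan is to prove the characterization by analyzing the snapshot sequence of a universal ITTM run in the oracle $x$ and matching its behavior against the constructible hierarchy $L[x]$, following the stabilization/looping method; the entire argument is a routine relativization of the unrelativized ITTM characterization, so everything below goes through with $x$ as a fixed side parameter. First I would set up a snapshot calculus: for a program $P$, let $s^{P}_{\alpha}$ denote the full configuration (tape contents, head position, internal state) of $P^{x}$ at time $\alpha$. The successor rule is $\Delta_{0}$, the $\liminf$ limit rule is arithmetical over the earlier run, and the partial run $(\,s^{P}_{\gamma}\mid\gamma<\beta\,)$ is $\Sigma_{1}$-definable over $L_{\beta}[x]$ uniformly in $P$. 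This yields the ``upward'' containments: a halting computation halts at an $x$-writable ordinal, so its output and its entire run lie in $L_{\lambda^{x}}[x]$, giving ITTM-writable $\subseteq L_{\lambda^{x}}[x]\cap\mathfrak{P}(\omega)$; and any configuration ever attained lies in some $L_{\alpha+1}[x]$, which is consistent with the definitions of $\zeta^{x}$ and $\Sigma^{x}$.

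The core of the argument is the looping lemma. Running a universal ITTM in the oracle $x$ (dovetailing all programs), any non-halting computation is \emph{eventually periodic} in a strong sense: there are ordinals $\eta\leq\theta$ such that $s_{\eta}=s_{\theta}$ and the run on $[\eta,\theta)$ repeats cofinally, so that the output tape stabilizes exactly by stage $\eta$ while every recurring tape content is already present by stage $\theta$. I would then identify the least such pair $(\eta,\theta)$ with $(\zeta^{x},\Sigma^{x})$ by reading the loop off the elementarity hierarchy: a $\Sigma_{2}$ fact true in $L_{\Sigma^{x}}[x]$ corresponds to a tape content that recurs and is therefore already realized at the loop point $\zeta^{x}$, which is exactly $L_{\zeta^{x}}[x]\prec_{\Sigma_{2}}L_{\Sigma^{x}}[x]$; and $\Sigma_{1}$-reflection between $\lambda^{x}$ and $\zeta^{x}$ captures that every halting subcomputation that can be triggered has already halted by $\lambda^{x}$, which is $L_{\lambda^{x}}[x]\prec_{\Sigma_{1}}L_{\zeta^{x}}[x]$. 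This is the step I expect to be the main obstacle: tying the purely combinatorial looping of snapshots precisely to the syntactic relations $\prec_{\Sigma_{1}}$ and $\prec_{\Sigma_{2}}$, and in particular verifying that the sup in the definitions of $\zeta^{x}$ and $\Sigma^{x}$ is closed off (no eventually, resp. accidentally, writable real first appears at the critical level itself).

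Finally I would close the ``downward'' containments by exhibiting machines that realize every real of the respective $L[x]$-level. Since $L_{\lambda^{x}}[x]$, $L_{\zeta^{x}}[x]$ and $L_{\Sigma^{x}}[x]$ are countable with codes for their ordinals that are respectively writable, eventually writable, and accidentally writable in $x$, one builds the $L[x]$-hierarchy by an ITTM and writes (resp.\ eventually writes, resp.\ accidentally writes) any prescribed real as soon as a code for a large enough ordinal becomes available; this gives $L_{\lambda^{x}}[x]\cap\mathfrak{P}(\omega)\subseteq$ writable, and likewise for the other two. Lexical minimality of $(\lambda^{x},\zeta^{x},\Sigma^{x})$ then follows from the three equalities: decreasing the first coordinate would omit some writable real, decreasing the second some eventually writable real, and decreasing the third some accidentally writable real, each contradicting the characterization just established. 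As all constructibility-theoretic inputs and the $\liminf$ snapshot analysis relativize verbatim to an arbitrary oracle $x$, the stated relativized form follows; alternatively one may simply invoke the relativization of Welch's theorem.
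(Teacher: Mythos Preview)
The paper does not actually prove this theorem: its entire proof is the single line ``See \cite{We3}.'' In other words, this is quoted as a known result of Welch, and your closing remark (``alternatively one may simply invoke the relativization of Welch's theorem'') is precisely what the paper does and nothing more.

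Your sketch goes well beyond this and is essentially a correct outline of the argument in the cited reference: the $\liminf$ snapshot calculus, the eventual-periodicity (looping) lemma for the universal machine, the identification of the loop endpoints with $(\zeta^{x},\Sigma^{x})$, and reading off $\prec_{\Sigma_{1}}$ and $\prec_{\Sigma_{2}}$ from the looping behaviour. One small point of phrasing: your lexical-minimality step (``decreasing the first coordinate would omit some writable real'') is not by itself a contradiction to the elementarity condition. What one actually argues is that if $\alpha<\lambda^{x}$ then some program $P$ has $P^{x}\downarrow$ with halting time in $[\alpha,\lambda^{x})$, so the $\Sigma_{1}$-in-$x$ sentence ``$P^{x}\downarrow$'' fails in $L_{\alpha}[x]$ but holds in $L_{\zeta^{x}}[x]$; hence $L_{\alpha}[x]\not\prec_{\Sigma_{1}}L_{\zeta^{x}}[x]$, and one rules out smaller $\beta,\gamma$ by first establishing that $(\zeta^{x},\Sigma^{x})$ is the lexically least $\Sigma_{2}$-pair. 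This is a matter of wording rather than substance, and does not affect the correctness of your plan.
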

\begin{proof}
 See \cite{We3}.
\end{proof}

\begin{thm}{\label{lambdalimit}}(Hamkins-Lewis)
For each real $x$, $\lambda^{x}$ is $x$-admissible, a limit of $x$-admissible ordinals and a limit of $x$-admissible limits of $x$-admissible ordinals.
\end{thm}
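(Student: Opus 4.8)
The plan is to read everything off the characterisation in Theorem \ref{ITTMchar} together with one structural principle about ITTMs: $\lambda^{x}$ is the supremum of the $x$-clockable ordinals, and, being a limit of them without being $x$-clockable itself, it has the feature that every halting $x$-computation stops at a time $<\lambda^{x}$; equivalently (Theorem \ref{ITTMchar}) the $x$-writable ordinals are exactly those $<\lambda^{x}$, so any ordinal with a code produced by a \emph{halting} $x$-computation lies below $\lambda^{x}$. From Theorem \ref{ITTMchar} I first extract the chain $L_{\lambda^{x}}[x]\prec_{\Sigma_{1}}L_{\zeta^{x}}[x]\prec_{\Sigma_{2}}L_{\Sigma^{x}}[x]$; since $\Sigma_{2}$-elementarity implies $\Sigma_{1}$-elementarity, transitivity yields $L_{\lambda^{x}}[x]\prec_{\Sigma_{1}}L_{\zeta^{x}}[x]$ and $L_{\lambda^{x}}[x]\prec_{\Sigma_{1}}L_{\Sigma^{x}}[x]$.

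For $x$-admissibility of $\lambda^{x}$ I would invoke the transfer lemma: if $L_{\alpha}[x]\prec_{\Sigma_{1}}L_{\beta}[x]$ with $\alpha<\beta$ and $L_{\beta}[x]$ is admissible, then $L_{\alpha}[x]$ is admissible. The only non-routine axiom is $\Sigma_{1}$-collection. Given an instance $L_{\alpha}[x]\models\forall u\in c\,\exists v\,\phi(u,v)$ with $\phi\equiv\exists w\,\phi_{0}$ a $\Sigma_{1}$-formula and $c\in L_{\alpha}[x]$, upward $\Sigma_{1}$-absoluteness gives $L_{\beta}[x]\models\forall u\in c\,\exists v\,\phi(u,v)$, admissibility of $L_{\beta}[x]$ supplies a bounding set, i.e. $L_{\beta}[x]\models\exists b\,(\mathrm{Trans}(b)\wedge\forall u\in c\,\exists v\in b\,\exists w\in b\,\phi_{0}(u,v,w))$, and this last statement is itself $\Sigma_{1}$ in parameters from $L_{\alpha}[x]$, so it reflects back down to $L_{\alpha}[x]$ and bounds the witnesses there. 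Applying the lemma along the chain reduces $x$-admissibility of $\lambda^{x}$ to that of $\zeta^{x}$, and that to $x$-admissibility of the topmost ordinal $\Sigma^{x}$, i.e. of the $\Sigma_{2}$-stabilisation point of the universal computation; that last fact is the standard structural input, which I would cite from \cite{We3} (equivalently one may simply cite admissibility of $\lambda^{x}$ outright).

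For the two density clauses I would argue by contradiction using the halting principle, having first noted that the predicates $L_{\alpha}[x]\models\mathrm{KP}$ and ``$\alpha$ is an $x$-admissible limit of $x$-admissibles'' are ITTM-decidable from a writable code for $\alpha$: once a code for the countable structure $L_{\alpha}[x]$ has been written, the relevant schemes are checked by searching over its elements and over formula codes, all inside a halting computation. Fix $\delta<\lambda^{x}$; a code for $L_{\delta}[x]$ is $x$-writable, being produced by building $L[x]$ up to level $\delta$, which halts since $\delta<\lambda^{x}$. Let $N$ be the machine that, started from that code, builds $L[x]$ level by level above $\delta$ and halts at the first level satisfying a largeness predicate $\Phi$, where $\Phi$ is ``is $x$-admissible'' for the second clause and ``is an $x$-admissible limit of $x$-admissibles'' for the third. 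Since $\lambda^{x}$ itself satisfies $\Phi$ — by the first clause in the first case, and by the first two clauses in the second — there is a least $\Phi$-level $\alpha>\delta$, with $\alpha\le\lambda^{x}$. Were $\alpha=\lambda^{x}$, the machine $N$ would first detect $\Phi$ only after building $L[x]$ up to level $\lambda^{x}$, hence would halt at a time $\ge\lambda^{x}$, contradicting that every halting $x$-computation stops below $\lambda^{x}$. Thus $\alpha<\lambda^{x}$, and $\alpha$ witnesses density; as $\delta$ was arbitrary, the $x$-admissibles, and the $x$-admissible limits of $x$-admissibles, are each cofinal in $\lambda^{x}$.

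The main obstacle is the admissibility grounding in the second paragraph: because ``$\alpha$ is admissible'' is a genuinely $\Pi_{2}$ property of $\alpha$, it cannot be reflected downward through $\prec_{\Sigma_{1}}$ by brute force, and the transfer lemma merely pushes the burden up to the canonical ordinals $\zeta^{x}$ and $\Sigma^{x}$; proving that $\Sigma^{x}$ is admissible, from the eventual periodicity of the universal ITTM computation, is the load-bearing step and the point where the genuinely infinitary-computational content enters. For the two density clauses the only corresponding subtlety is the verification that $\Phi$ is an ITTM-decidable predicate on writable codes, so that the search genuinely halts; once halting is granted, the bound $\alpha<\lambda^{x}$ is forced automatically by the definition of $\lambda^{x}$ as a supremum of halting times.
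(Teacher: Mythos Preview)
Your proposal is correct. The paper's own proof is a bare citation to the Hamkins--Lewis Indescribability Theorem (Theorem 8.3 of \cite{HL}) and the remark that it relativizes; you instead unpack the argument. Your density argument---search upward through $L[x]$-levels for the first one satisfying the ITTM-decidable predicate $\Phi$, and note that a halting output must code an ordinal below $\lambda^{x}$---is precisely the mechanism behind the Indescribability Theorem, so on that part you are reproducing rather than replacing the cited proof. Where you genuinely diverge is in the admissibility clause: Hamkins and Lewis argue directly that a $\Sigma_{1}$-definable cofinal map $\omega\to\lambda$ would be ITTM-computable, making $\lambda$ writable; you instead route through Welch's $\lambda$--$\zeta$--$\Sigma$ theorem and a $\Sigma_{1}$-transfer lemma, reducing to the admissibility of $\Sigma^{x}$. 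That route is clean but trades an elementary ITTM argument for an appeal to the deeper (and chronologically later) structural characterisation. Two small remarks: your phrase ``upward $\Sigma_{1}$-absoluteness gives $L_{\beta}[x]\models\forall u\in c\,\exists v\,\phi$'' is loose, since the displayed sentence is $\Pi_{2}$; the argument you intend---transfer each instance $\exists v\,\phi(u,v)$ separately---is of course fine. And in the density step, the assertion that $N$ \emph{would} halt at stage $\lambda^{x}$ tacitly uses that an ITTM can write out $L[x]$-levels cofinally in the accidentally-writable range; this is standard, and your final paragraph shows you are aware it is the load-bearing detail.
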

\begin{proof}
This follows from the `Indescribability Theorem' $8.3$ of \cite{HL} (which in fact shows that we could iterate the `limit of'-operation as often as we wanted) whose proof easily relativizes.
\end{proof}

\begin{lemma}{\label{retractrecog}}
Let $x$ be ITTM-recognizable by the program $P$. Then $x\in L_{\lambda^{x}}$ and $x$ is the unique witness to some $\Sigma_{1}$-formula $\phi$ in $L_{\lambda^{x}}$.
\end{lemma}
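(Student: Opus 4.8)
The plan is to read the recognizer $P$ as a $\Sigma_1$-definition of $x$ and then force its witness down into $L_{\lambda^x}$ using admissibility together with Welch's characterization from Theorem \ref{ITTMchar}. First I would fix the defining formula. Let $\phi(y)$ abbreviate ``there is an ordinal $\alpha$ and a sequence $C$ of length $\alpha$ coding a halting computation of $P$ in the oracle $y$ that terminates with output $1$''. Since $C$ is required to be a genuine well-ordered run obeying the ITTM successor- and $\liminf$-rules, the matrix ``$C$ is such a computation'' is $\Delta_0$ in $C$ and $y$, so $\phi$ is $\Sigma_1$. The reason to talk about the \emph{computation} rather than the machine is absoluteness: a halting ITTM-computation is unique and, whenever a transitive admissible set $M$ contains $y$ together with the run $C$, we have $M\models\phi(y)$ iff $P^y\downarrow=1$ in $V$. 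As $P$ recognizes $x$, the latter holds iff $y=x$; hence in any such $M$ the formula $\phi$ has at most one witness, and it is $x$ precisely when $x\in M$. This already secures the uniqueness half of the statement and exhibits $\{x\}$ as a lightface $\Sigma^1_2$ singleton.

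The substantive half is $x\in L_{\lambda^x}$, and this is where I expect the main difficulty. The cheap, oracle-laden version is immediate: the true run of $P^x$ halts at some $\tau<\lambda^x$, and since this run is $\Delta_1$-definable from $x$ and of length $\tau<\lambda^x$, the $x$-admissibility of $\lambda^x$ (Theorem \ref{lambdalimit}) places it in $L_{\lambda^x}[x]$; thus $L_{\lambda^x}[x]\models\phi(x)$, and by uniqueness $x$ is the only witness there. The obstacle is to \emph{remove the oracle}, i.e. to see that $x$ is in fact constructed before stage $\lambda^x$ in the plain hierarchy rather than only in $L[x]$; the chain $L_{\lambda^x}[x]\prec_{\Sigma_1}L_{\zeta^x}[x]\prec_{\Sigma_2}L_{\Sigma^x}[x]$ of Theorem \ref{ITTMchar} transports $\exists y\,\phi(y)$ freely but always keeps the witness inside $L[x]$. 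To break this I would use the $\Sigma^1_2$-singleton property: $x$ is $\Delta^1_2$ and hence constructible by Shoenfield absoluteness, and the witnessing run together with $x$ is $\Delta_1$-definable from $x$ and bounded in length by $\tau<\lambda^x$, so all relevant objects stay below $\lambda^x$. Matching the constructibility rank of $x$ to $\lambda^x$ is the delicate point, and I would carry it out by a condensation argument on a $\Sigma_1$-hull that collapses the oracle-presentation of $x$ into a genuine element of $L_{\lambda^x}$.

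Once $x\in L_{\lambda^x}$ is in hand I would close the loop. Since $\lambda^x$ is admissible and $x\in L_{\lambda^x}$ with $\tau<\lambda^x$, the $\Delta_1$-in-$x$ run of $P^x$ of length $\tau$ is now an element of $L_{\lambda^x}$ by $\Sigma_1$-collection, so $L_{\lambda^x}\models\phi(x)$; conversely any $y\in L_{\lambda^x}$ with $L_{\lambda^x}\models\phi(y)$ yields a genuine halting computation by absoluteness, whence $y=x$. Thus $\phi$ is a $\Sigma_1$-formula of which $x$ is the unique witness in $L_{\lambda^x}$, as claimed. As a cross-check, the unique-witness property makes $n\mapsto(n\in x)$ a $\Delta_1$ predicate over $L_{\lambda^x}$, so $\Delta_1$-separation reconfirms $x\in L_{\lambda^x}$. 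Everything here is routine bookkeeping about absoluteness of computations; the genuine work is the level-matching in the previous paragraph.
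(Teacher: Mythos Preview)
You correctly isolate the real difficulty: passing from $x\in L_{\lambda^x}[x]$ to $x\in L_{\lambda^x}$. But your proposed resolution---Shoenfield to get $x\in L$, then an unspecified ``condensation argument on a $\Sigma_1$-hull''---does not do the job. Shoenfield gives you $x\in L$ with no control on the level, and a $\Sigma_1$-hull of $\emptyset$ inside any $L_\gamma$ containing $x$ collapses to some $L_\delta$ with $\delta$ bounded only by the first stable ordinal $\sigma$, not by $\lambda^x$. Taking the hull inside $L_{\lambda^x}[x]$ instead keeps $x$ as a parameter and leaves you in the relativized hierarchy. In short, nothing in your sketch pins the constructibility rank of $x$ to $\lambda^x$; you have identified the obstacle but not removed it. Your final ``cross-check'' via $\Delta_1$-separation is circular for the same reason: the $\Sigma_1$ half $\exists y(\phi(y)\wedge n\in y)$ defines $n\in x$ over $L_{\lambda^x}$ only if $L_{\lambda^x}$ already contains a witness to $\phi$, i.e.\ only if $x\in L_{\lambda^x}$.

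The paper closes the gap with the Jensen--Karp theorem: if $\alpha$ is a limit of admissible ordinals, then parameter-free $\Sigma_1$ statements are absolute between $V_\alpha$ and $L_\alpha$. Since $\lambda^x$ is a limit of admissibles (Theorem~\ref{lambdalimit}) and $x\in V_{\lambda^x}$ witnesses $\exists y\,P^y{\downarrow}=1$ there (the run has length $<\lambda^x$), Jensen--Karp yields $L_{\lambda^x}\models\exists y\,P^y{\downarrow}=1$; the witness in $L_{\lambda^x}$ is then $x$ by absoluteness of computations and the fact that $P$ recognizes $x$. This single invocation replaces your entire second paragraph. Note that Jensen--Karp is \emph{not} a routine Skolem-hull collapse (the paper's appendix builds a Henkin model with an infinitary $\omega$-rule over the ordinal constants), so the gesture toward ``condensation'' genuinely undersells what is needed.
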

\begin{proof}
By definition of $\lambda^{x}$, $P^{x}$ halts in less than $\lambda^{x}$ many steps, hence $L_{\lambda^{x}}[x]\models P^{x}\downarrow=1$. Furthermore, as $P$ recognizes $x$,
$x$ is the unique element of $L_{\lambda^{x}}[x]$ with this property. As $P^{y}\downarrow=1$ is $\Sigma_{1}$-expressible in the parameter $y$, so is $\exists{y}P^{y}\downarrow=1$.
Now, $\lambda^{x}$ is a limit of admissible ordinals by Theorem \ref{lambdalimit}. 

 By a theorem of Jensen and Karp (\cite{JK}), set theoretical $\Sigma_{1}$-formulas are absolute between 
$L_{\alpha}$ and $V_{\alpha}$ when $\alpha$ is a limit of admissible ordinals. As $x\in V_{\lambda^{x}}$, we have $V_{\lambda^{x}}\models\exists{y}P^{y}\downarrow=1$. Hence $L_{\lambda^{x}}\models\exists{y}P^{y}\downarrow=1$.
So there is $y\in L_{\lambda^{x}}$ such that $L_{\lambda^{x}}\models P^{y}\downarrow=1$. By absoluteness of computations, $P^{y}\downarrow=1$ holds in the real world.
As $P$ recognizes $x$, we must have $y=x$. So $x\in L_{\lambda^{x}}$, and $x$ is the unique witness in $L_{\lambda^{x}}$ to the $\Sigma_{1}$-formula $\exists{y}P^{y}\downarrow=1$.
\end{proof}

We note that, as a consequence, there are no recognizable intermediate degrees for ITTMs. An important question in classical recursion theory is whether there exist `natural', `specific' examples of reals
incomparable with $0^{\prime}$ in the sense of Turing reducibility (see e.g. \cite{Si}). For infinitary machines, it seems sensible to understand `specific' as `recognizable'. The following can hence be seen
as a proof that such reals do not exist for ITTMs. A similar result for ITRMs was obtained in \cite{Ca4}.

\begin{thm}{\label{ittrecogcomporstrong}}
Assume that $x$ is ITTM-recognizable. Then $x$ is computable or $0^{\prime}_{ITTM}$, the halting real for ITTMs, is ITTM-computable from $x$.
\end{thm}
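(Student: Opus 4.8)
The plan is to split on whether the oracle $x$ increases the supremum of halting times. Write $\lambda:=\lambda^{\emptyset}$ for the supremum of ITTM-halting times with empty oracle. Since every computation with empty oracle is in particular a computation in the oracle $x$ (which may simply be ignored), every $\emptyset$-clockable ordinal is $x$-clockable, and hence $\lambda\leq\lambda^{x}$ always. I would therefore distinguish the cases $\lambda^{x}=\lambda$ and $\lambda^{x}>\lambda$, showing that the first forces $x$ to be computable and the second forces $0'_{ITTM}$ to be ITTM-computable from $x$.

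Consider first the case $\lambda^{x}=\lambda$. By Lemma \ref{retractrecog}, recognizability of $x$ gives $x\in L_{\lambda^{x}}=L_{\lambda}$. By Theorem \ref{ITTMchar} applied to the empty oracle, the ITTM-computable reals are exactly the reals lying in $L_{\lambda^{\emptyset}}[\emptyset]=L_{\lambda}$; hence $x$ is ITTM-computable and we are done in this case.

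Now suppose $\lambda^{x}>\lambda$. The key point is that the $x$-writable ordinals form a downward-closed class whose supremum is $\lambda^{x}$ (the oracle-relativized form of Welch's theorem that the suprema of the writable and of the clockable ordinals coincide). Consequently every ordinal below $\lambda^{x}$, and in particular $\lambda$ itself, is $x$-writable. Fix an ITTM-program that, in the oracle $x$, writes a real $c$ coding a well-order of order type $\lambda$; by carrying out the relative $L$-construction along this well-order one can then write, still from $x$, a code for the countable structure $L_{\lambda}$. I would next observe that a program $P_{e}^{\emptyset}$ halts if and only if its halting time is below $\lambda$, i.e.\ if and only if a (code of a) halting computation of $P_{e}^{\emptyset}$ already occurs in $L_{\lambda}$; since this last condition can be tested by searching through a code for $L_{\lambda}$, the real $0'_{ITTM}=\{e:P_{e}^{\emptyset}\downarrow\}$ is ITTM-computable from any code for $L_{\lambda}$, and hence, composing the reductions, ITTM-computable from $x$. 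Transitivity here is unproblematic: a computation in the oracle $c$ can be performed in the oracle $x$ by first writing $c$ and then simulating, since $c$ is $x$-writable and $\lambda^{x}$ is admissible (so $L_{\lambda^{c}}[c]\subseteq L_{\lambda^{x}}[x]$).

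The main obstacle is entirely in the case $\lambda^{x}>\lambda$, and within it the two facts that (i) every ordinal below $\lambda^{x}$ is genuinely $x$-writable and (ii) the halting problem can be read off from a code for $L_{\lambda}$. Point (i) rests on the nontrivial coincidence of the suprema of the writable and clockable ordinals together with their downward closure, for which I would appeal to \cite{We3}; point (ii) requires checking that a halting computation of length $<\lambda$ is really an element of $L_{\lambda}$ and that the search for it is an honest (halting) ITTM-computation, which holds because $\lambda$ is admissible and a limit of admissibles by Theorem \ref{lambdalimit}. The easy case, by contrast, is essentially immediate from Lemma \ref{retractrecog} and Theorem \ref{ITTMchar}.
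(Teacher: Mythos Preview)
Your proof is correct and follows the same overall architecture as the paper: reduce to the case $\lambda^{x}>\lambda$ via Lemma \ref{retractrecog}, obtain an $x$-computable code for $L_{\lambda}$, and read off $0'_{\text{ITTM}}$ from that code.

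The one genuine difference is in how you obtain the code for $L_{\lambda}$. You invoke the (relativized) coincidence of the suprema of writable and clockable ordinals together with downward closure to conclude that $\lambda$ is $x$-writable, then build $L_{\lambda}$ along an $x$-written well-order of type $\lambda$. The paper instead argues directly that the $\Sigma_{1}$-Skolem hull of $\emptyset$ in $L_{\lambda}$ is all of $L_{\lambda}$, so $\lambda$ is an index, and then cites Boolos--Putnam to get a real code for $L_{\lambda}$ already in $L_{\lambda+1}\subseteq L_{\lambda^{x}}[x]$. The paper's route is a bit more self-contained (it avoids appealing to the writable/clockable supremum theorem and the associated no-gaps machinery), while yours is more operational and makes the ITTM computation explicit. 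Either way the remaining steps---evaluating ``$P_{e}\downarrow$'' inside a code for $L_{\lambda}$ and the transitivity of the reduction---are handled the same, and your justification of transitivity via admissibility of $\lambda^{x}$ is fine.
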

\begin{proof}
Assume that $x$ is a lost melody, i.e. recognizable, but not computable. So, by Lemma \ref{retractrecog}, we have $x\in L_{\lambda^{x}}-L_{\lambda}$.
So $\lambda^{x}>\lambda$. If $P$ is a halting ITTM-program, then $P$ will also halt inside the $\Sigma_{1}$-Skolem hull $\mathcal{H}$ of $\emptyset$ in $L_{\lambda}$,
so $\mathcal{H}$ must contain the computation of $P$ for each halting $P$. 
Thus, $\mathcal{H}$ is isomorphic to $L_{\lambda}$, so that a surjection of $\omega$ onto $L_{\lambda}$
is definable over $L_{\lambda}$. Hence $\lambda$ is an index, i.e. $(L_{\lambda+1}-L_{\lambda})\cap\mathbb{R}\neq\emptyset$. By a theorem of Boolos and Putnam (see \cite{BP}),
$L_{\lambda+1}$ contains an arithmetical copy of $L_{\lambda}$, i.e. $\text{cc}(L_{\lambda})\in L_{\lambda+1}$. Hence $\text{cc}(L_{\lambda})\in L_{\lambda^{x}}[x]$,
so $\text{cc}(L_{\lambda})$ is computable from $x$. However, from $\text{cc}(L_{\lambda})$, it is easy to obtain $0^{\prime}_{ITTM}$ by using $\text{cc}(L_{\lambda})$
for evaluating statements of the form $P_{i}\downarrow$ in $L_{\lambda}$.
\end{proof}


\begin{lemma}{\label{genericlambda}}
Let $x\subseteq\omega$, and let $y$ be Cohen-generic over $L_{\Sigma^{x}+1}[x]$. Then $\lambda^{x\oplus y}=\lambda^{x}$.
\end{lemma}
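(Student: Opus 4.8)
The plan is to prove the two inequalities $\lambda^{x}\le\lambda^{x\oplus y}$ and $\lambda^{x\oplus y}\le\lambda^{x}$ separately, the first being routine and the second being the substance, routed through the Welch characterization of Theorem \ref{ITTMchar}.

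First I would dispose of $\lambda^{x}\le\lambda^{x\oplus y}$. Since $x$ is read off the even bits of $x\oplus y$, any program $P$ transforms into a program $P'$ that simulates $P^{x}$ while fetching bit $n$ of $x$ from bit $2n$ of its oracle; if $P^{x}$ halts in exactly $\alpha$ steps, so does $P'^{x\oplus y}$. Hence every clockable ordinal for $x$ is clockable for $x\oplus y$, and taking suprema yields $\lambda^{x}\le\lambda^{x\oplus y}$. For the reverse inequality I would use that, by Theorem \ref{ITTMchar}, the triple $(\lambda^{x\oplus y},\zeta^{x\oplus y},\Sigma^{x\oplus y})$ is the lexicographically least $(\alpha,\beta,\gamma)$ with $L_{\alpha}[x\oplus y]\prec_{\Sigma_{1}}L_{\beta}[x\oplus y]\prec_{\Sigma_{2}}L_{\gamma}[x\oplus y]$. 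It therefore suffices to show that the ground-model triple $(\lambda^{x},\zeta^{x},\Sigma^{x})$ already witnesses this elementarity chain for the oracle $x\oplus y$: lex-minimality then forces $\lambda^{x\oplus y}\le\lambda^{x}$, and combined with the first inequality we obtain equality.

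Two preliminary observations make this tractable. First, since $\lambda^{x},\zeta^{x},\Sigma^{x}$ are each additively indecomposable (for $\lambda^{x}$ this is Theorem \ref{lambdalimit}; for $\zeta^{x}$ and $\Sigma^{x}$ it is a standard feature of these ordinals), the argument already used in Lemma \ref{genhaltingtimes} via Theorem $9.0$ of \cite{Ma} shows $L_{\alpha}[x][y]=L_{\alpha}[x\oplus y]$ for $\alpha\in\{\lambda^{x},\zeta^{x},\Sigma^{x}\}$; I may thus freely replace the relativized levels $L_{\alpha}[x\oplus y]$ by the generic extensions $L_{\alpha}[x][y]$. Second, because $y$ is Cohen-generic over $L_{\Sigma^{x}+1}[x]$ and Cohen forcing is absolute, $y$ is simultaneously generic over $L_{\lambda^{x}+1}[x]$ and $L_{\zeta^{x}+1}[x]$, so the forcing theorem for admissible sets (Lemma $10.10$ of \cite{Ma}) is available at each of the three levels.

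The core of the argument, and the step I expect to be the main obstacle, is the preservation of the elementarity chain under the generic extension. For the lower link $L_{\lambda^{x}}[x][y]\prec_{\Sigma_{1}}L_{\zeta^{x}}[x][y]$ I would argue downward: a $\Sigma_{1}$-fact with parameters in $L_{\lambda^{x}}[x][y]$ holding in $L_{\zeta^{x}}[x][y]$ is forced by some condition, the forcing relation for $\Sigma_{1}$-formulas is $\Sigma_{1}$-definable over the ground model $L_{\zeta^{x}}[x]$, so the ground-level elementarity $L_{\lambda^{x}}[x]\prec_{\Sigma_{1}}L_{\zeta^{x}}[x]$ reflects the forced existential down to $L_{\lambda^{x}}[x]$, where genericity over $L_{\lambda^{x}+1}[x]$ realizes a witness inside $L_{\lambda^{x}}[x][y]$. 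The genuinely delicate part is the upper link $L_{\zeta^{x}}[x][y]\prec_{\Sigma_{2}}L_{\Sigma^{x}}[x][y]$, since a $\Sigma_{2}$-statement carries an inner universal quantifier ranging over the whole extension and is not captured by a single condition. Here I would push the same force-and-reflect scheme through the $\exists\forall$ prefix, using that the $\Pi_{1}$ forcing relation is again definable over the ground model and invoking $L_{\zeta^{x}}[x]\prec_{\Sigma_{2}}L_{\Sigma^{x}}[x]$; this is exactly the point at which the generic must be taken over $L_{\Sigma^{x}+1}[x]$ rather than merely over $L_{\zeta^{x}+1}[x]$, so that it meets every dense set needed to control the $\Pi_{1}$-part all the way up to $\Sigma^{x}$. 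Verifying that Cohen genericity at this level transfers the full $\Sigma_{2}$-elementarity is the crux; the remainder is bookkeeping around Theorem \ref{ITTMchar} and the indecomposability translation.
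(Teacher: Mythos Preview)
Your plan is correct and is precisely the standard argument that the cited references unpack: reduce to Welch's lex-minimal characterization (Theorem~\ref{ITTMchar}) and show that Cohen forcing over $L_{\Sigma^{x}+1}[x]$ preserves the chain $L_{\lambda^{x}}[x]\prec_{\Sigma_{1}}L_{\zeta^{x}}[x]\prec_{\Sigma_{2}}L_{\Sigma^{x}}[x]$ via the definability of the forcing relation at the appropriate level. The paper itself gives no argument beyond pointing to Theorem~3.1 of \cite{We} and Lemma~33 of \cite{CS} and asserting that the proof relativizes; what you have written is a faithful sketch of that relativized proof, including the correct identification of the $\Sigma_{2}$-link as the place where genericity over the full $L_{\Sigma^{x}+1}[x]$ (rather than just $L_{\zeta^{x}+1}[x]$) is actually needed.
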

\begin{proof}
This is the relativized version of a fact used in the proof of Theorem $3.1$ of \cite{We}.
The proof given in Lemma $33$ of \cite{CS} relativizes.
\end{proof}

%

We will need the following Theorem due to A. Mathias, which implies that, under rather mild assumptions on the closedness of $L_{\alpha}[x]$, the forcing extensions some of some $L_{\alpha}[x]$ by a generic
real $y$ is the same as $L_{\alpha}[x\oplus y]$. Here, $P_{\theta}^{e}$ denotes the provident hierachy relativized to $e$ and $(P_{\theta}^{e})^{\mathbb{P}}[G]$ denotes the generic extension of $P_{\theta}^{e}$ by $G$, where
$G$ is a $\mathbb{P}$-generic filter over $P_{\theta}^{e}$. For the definition of the provident hierarchy and of a provident set, see \cite{Ma}. 

\begin{thm}{\label{math}}
 Let $\theta$ be an indecomposable ordinal strictly greater than the rank of a transitive set $e$ which contains the notion of forcing $\mathbb{P}$.
Let $G$ be $(P_{\theta}^{e},\mathbb{P})$-generic. Then $(P_{\theta}^{e})^{\mathbb{P}}[G]=P_{\theta}^{e\cup\{G\}}$.
\end{thm}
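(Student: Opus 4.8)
The plan is to establish the two inclusions $(P_{\theta}^{e})^{\mathbb{P}}[G]\subseteq P_{\theta}^{e\cup\{G\}}$ and $P_{\theta}^{e\cup\{G\}}\subseteq(P_{\theta}^{e})^{\mathbb{P}}[G]$ separately, each by transfinite induction along the stages of the provident hierarchy. The conceptual point is that both the map sending a $\mathbb{P}$-name $\tau$ to its interpretation $\tau_{G}$ and the step operation generating the provident hierarchy are built from rudimentary recursions, and the hypotheses $\mathrm{rank}(e)<\theta$ together with $\theta$ indecomposable guarantee that these recursions stay inside level $\theta$. First I would fix notation, writing $P_{\xi}^{e}$ for the $\xi$-th stage of the provident hierarchy over $e$ so that $P_{\theta}^{e}=\bigcup_{\xi<\theta}P_{\xi}^{e}$, and recall from \cite{Ma} that for indecomposable $\theta$ this union is closed under the rudimentary functions and under the relevant rud recursions.

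For the inclusion $(P_{\theta}^{e})^{\mathbb{P}}[G]\subseteq P_{\theta}^{e\cup\{G\}}$, recall that every element of the extension has the form $\tau_{G}$ for some name $\tau\in P_{\theta}^{e}$, where $\tau_{G}=\{\sigma_{G}:\exists p\in G\,(\sigma,p)\in\tau\}$. The assignment $(\tau,G)\mapsto\tau_{G}$ is given by an $\in$-recursion whose step is rudimentary, so I would verify that it is among the recursions under which the provident hierarchy is closed. Since $\tau\in P_{\theta}^{e}\subseteq P_{\theta}^{e\cup\{G\}}$ and $G\in P_{\theta}^{e\cup\{G\}}$, and since the rank of $\tau_{G}$ is bounded in terms of the rank of $\tau$, closure of $P_{\theta}^{e\cup\{G\}}$ under this recursion (again using indecomposability to absorb the bounded rank increase) yields $\tau_{G}\in P_{\theta}^{e\cup\{G\}}$.

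The reverse inclusion $P_{\theta}^{e\cup\{G\}}\subseteq(P_{\theta}^{e})^{\mathbb{P}}[G]$ is the substantial direction, and I would prove it by showing that every element of $P_{\theta}^{e\cup\{G\}}$ possesses a name lying in $P_{\theta}^{e}$. Inductively, if the sets produced at earlier stages all have names in $P_{\theta}^{e}$, then an object generated at the next stage by a rud operation applied to such sets again admits a canonically constructed name, provided one can internally decide which interpretations the operation forces into the result. This is exactly where a \emph{forcing theorem for provident sets} is needed: one must know that the forcing relation $p\Vdash\varphi$, for the atomic and rudimentary formulas driving the construction, is itself definable by a rud function over $P_{\theta}^{e}$. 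Granting such definability, genericity of $G$ converts the forced statements into true statements about the interpretations, so that the canonical names interpret correctly and the induction closes.

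The main obstacle is precisely this provident forcing theorem, which plays the role that the admissible forcing theorem (Lemma $10.10$ of \cite{Ma}) plays in the admissible setting but must be re-proved in the weaker rudimentary context: one has to show that $\Vdash$ restricted to the relevant class of formulas is uniformly rud over each stage and that its complexity does not grow with the stage, so that it is captured inside $P_{\theta}^{e}$. Here the indecomposability of $\theta$ is essential, as it provides the closure under pairing of stages required to define the forcing relation internally and keeps all rank bounds below $\theta$. Since all of this machinery is developed in \cite{Ma}, I would ultimately appeal to the construction there to supply the rud-definability of forcing and conclude the desired equality.
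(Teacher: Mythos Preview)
The paper does not actually prove this statement: its entire proof reads ``See \cite{Ma}, Theorem $9$.'' The result is quoted from Mathias, not established in the paper. Your proposal is therefore not being compared against a proof but against a bare citation.

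That said, your sketch is a reasonable outline of how Mathias's argument proceeds: the two inclusions, the observation that $(\tau,G)\mapsto\tau_{G}$ is a rud $\in$-recursion and hence preserved by provident closure, and the need for a rud-definable forcing relation (the ``provident forcing theorem'') to manufacture names for the reverse inclusion. You correctly identify indecomposability of $\theta$ as the mechanism that keeps all the rank arithmetic below $\theta$. However, you do not actually carry out the hard step---showing that the forcing relation for the relevant formula class is uniformly rud over the stages of $P_{\theta}^{e}$---and you explicitly defer to \cite{Ma} for it. So in the end your proposal, like the paper, amounts to citing Mathias; you have simply unpacked the architecture of the cited result rather than giving an independent proof. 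If the intent was to reproduce the paper's treatment, a one-line citation suffices; if the intent was to supply a self-contained argument, the provident forcing lemma must be stated precisely and proved, which your proposal does not do.
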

\begin{proof}
 See \cite{Ma}, Theorem $9$. 
\end{proof}

\textbf{Remark}: This theorem does not immediately apply in the case of real numbers as these are in general not transitive; we can e.g. circumvent this problem by replacing $e\subseteq\omega$ by the transitive set $e^{\prime}:=\omega\cup\{\{i+1\}:i\in e\}$.
As we will only use this theorem in the context of Cohen-forcing which is contained in $L_{\omega+6}$, the condition that $e$ must contain the notion of forcing is not relevant for our purposes: The relevant information
can always be encoded in a transitive set of rank $\omega+i$ where $i\in\omega$.

Moreoever, if $x$ is a real number and $\alpha$ is $x$-admissible, then we have $P_{\alpha}[x]=J_{\alpha}[x]=L_{\alpha}[x]$. As these hierarchies are continuous by definition, the same holds when $\alpha$
is a limit of $x$-admissible ordinals. 

\begin{corollary}{\label{relforc}}
Let $x,y\subseteq\omega$. Write $L_{\alpha}^{x}$ for the $\alpha$-th level of the $L$-hierarchy relativized to $x$ and, if $y$ is generic over some $M$, write $M[y]$ for the generic extension. Suppose that
 $\alpha$ is $x$-admissible or a limit of $x$-admissible ordinals and that $y$ is Cohen-generic over $L_{\alpha}[x]$. Then $L_{\alpha}^{x\oplus y}=L_{\alpha}^{x}[y]$.
\end{corollary}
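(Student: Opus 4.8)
The plan is to derive this as a direct application of Theorem \ref{math}, using the bookkeeping supplied by the two preceding remarks to pass between real oracles and transitive bases and between the provident and constructible hierarchies. The only real work is to check that the hypotheses of Theorem \ref{math} hold and that the provident levels in its conclusion coincide with the relativized $L$-levels in the statement.

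First I would set up the transitive bases. Following the first remark, I replace the real $x$ by the transitive set $x' := \omega \cup \{\{i+1\} : i \in x\}$, which codes $x$ and from which $x$ is recoverable; since $x$ and $x'$ are interdefinable by a fixed rudimentary operation, $L_{\alpha}[x] = L_{\alpha}^{x'}$ for every $\alpha \geq \omega$, and in particular $P_{\alpha}^{x'} = L_{\alpha}[x]$ by the second remark, using that $\alpha$ is $x$-admissible or a limit of $x$-admissible ordinals. I then enlarge $x'$ to a transitive set $e$ of rank $\omega + i$ (some finite $i$) that also contains a code for Cohen forcing $\mathbb{P}$, which is possible since $\mathbb{P} \in L_{\omega + 6}$.

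Next I verify the hypotheses of Theorem \ref{math} with $\theta := \alpha$ and this $e$. The ordinal $\alpha$ is indecomposable: $x$-admissible ordinals are indecomposable, and a limit of indecomposable ordinals is again indecomposable, since for $\gamma < \alpha = \sup_{n}\alpha_{n}$ one has $\gamma + \alpha = \sup_{n}(\gamma + \alpha_{n}) = \sup_{n}\alpha_{n} = \alpha$, as $\gamma + \alpha_{n} = \alpha_{n}$ for all large $n$. Moreover $\alpha > \omega + i = \operatorname{rank}(e)$, since $\alpha \geq \omega_{1}^{\text{CK},x}$. Letting $G$ be the Cohen-generic filter corresponding to $y$ (so that $y$ Cohen-generic over $L_{\alpha}[x] = P_{\alpha}^{x'}$ yields a $(P_{\alpha}^{x'},\mathbb{P})$-generic $G$), Theorem \ref{math} gives
$$ (P_{\alpha}^{x'})^{\mathbb{P}}[G] = P_{\alpha}^{x' \cup \{G\}}. $$
The left-hand side equals $L_{\alpha}[x][y] = L_{\alpha}^{x}[y]$ by the identification $P_{\alpha}^{x'} = L_{\alpha}[x]$ above.

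Finally I identify the right-hand side with $L_{\alpha}^{x \oplus y}$. The base $x' \cup \{G\}$ codes exactly the real $x \oplus y$, so $P_{\alpha}^{x' \cup \{G\}} = P_{\alpha}^{(x \oplus y)'}$, and it remains to see that this provident level equals $L_{\alpha}^{x \oplus y}$. This is the step I expect to be the main obstacle, because the second remark delivers $P_{\beta}[w] = J_{\beta}[w] = L_{\beta}[w]$ only under the hypothesis that $\beta$ is $w$-admissible, whereas a priori $\alpha$ need not be $(x \oplus y)$-admissible. I would resolve it by noting that these equalities rest only on \emph{arithmetic} closure properties of $\alpha$ that are absolute and hence oracle-independent: the identity $P_{\beta}^{e} = J_{\beta}^{e}$ requires only that $\beta$ be indecomposable, and the index identity $J_{\beta}[w] = L_{\beta}[w]$ requires only $\beta = \omega \cdot \beta$; both hold for our $\alpha$ by virtue of its being $x$-admissible (or a limit thereof) and are preserved when the oracle is changed to $x \oplus y$. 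As a robust fallback in the limit case, genericity of $y$ over $L_{\alpha}[x]$ entails genericity over $L_{\beta+1}[x]$ for every $x$-admissible $\beta < \alpha$, so Lemma \ref{genhaltingtimes} makes each such $\beta$ an $(x \oplus y)$-admissible ordinal and hence $\alpha$ a limit of $(x \oplus y)$-admissible ordinals, which is precisely the hypothesis needed to apply the second remark directly to $x \oplus y$. Combining the two identifications yields $L_{\alpha}^{x}[y] = L_{\alpha}^{x \oplus y}$, as required.
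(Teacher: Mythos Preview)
Your proposal is correct and follows exactly the approach the paper intends: the corollary is stated without proof, as an immediate consequence of Theorem~\ref{math} together with the two remarks, and you have carefully unpacked this derivation. Your explicit treatment of the identification $P_{\alpha}^{x'\cup\{G\}}=L_{\alpha}[x\oplus y]$, in particular the observation that the equalities $P_{\beta}=J_{\beta}=L_{\beta}$ depend only on oracle-independent arithmetic properties of $\beta$ (indecomposability, $\beta=\omega\cdot\beta$), fills in a point the paper leaves implicit; your fallback via Lemma~\ref{genhaltingtimes} in the limit case is a slight extension of that lemma's stated form (which is phrased for $\omega_{n+1}^{\text{CK},x}$ rather than arbitrary $x$-admissibles), but the lemma's proof applies verbatim to any $x$-admissible $\beta$, so this is harmless.
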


From now on, we therefore can and will use the square bracket notation in both cases.


\begin{lemma}{\label{relJK}}
Let $x\subseteq\omega$, $\phi$ a $\Sigma_{1}$-formula in the parameter $x$, possibly with real parameters in $L_{\alpha}[x]$ where $\alpha$ is a limit of $x$-admissible ordinals.
Then $\phi$ is absolute between $V_{\alpha}$ and $L_{\alpha}[x]$.
\end{lemma}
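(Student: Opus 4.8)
The plan is to prove the two directions separately: the inclusion $L_{\alpha}[x]\subseteq V_{\alpha}$ gives one direction by pure $\Sigma_{1}$-upward absoluteness, while the content lies in reflecting a witness living in $V_{\alpha}$ down into $L_{\alpha}[x]$ at some level below $\alpha$. First I would record that, since every element of $L_{\beta}[x]$ for $\beta<\alpha$ has rank below $\alpha$, we have $L_{\alpha}[x]\subseteq V_{\alpha}$ as transitive classes; hence if $\phi=\exists y\,\psi(y)$ with $\psi\in\Delta_{0}$ holds in $L_{\alpha}[x]$ with witness $y$, the same $y$ witnesses $\phi$ in $V_{\alpha}$ by $\Delta_{0}$-absoluteness. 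Next I would dispose of the auxiliary real parameters $p_{1},\dots,p_{k}\in L_{\alpha}[x]$: each $p_{j}$ lies in some $L_{\beta_{0}}[x]$ with $\beta_{0}<\alpha$, so $L_{\gamma}[x]=L_{\gamma}[x\oplus p_{1}\oplus\cdots\oplus p_{k}]$ for every limit $\gamma>\beta_{0}+\omega$; consequently the $x$-admissibles and the $(x\oplus\vec{p})$-admissibles agree cofinally below $\alpha$, so $\alpha$ is still a limit of $(x\oplus\vec{p})$-admissibles, and I may replace $x$ by $x\oplus\vec{p}$ and assume $\phi=\exists y\,\psi(y)$ has $x$ as its only parameter.

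For the nontrivial direction, suppose $V_{\alpha}\models\exists y\,\psi(y)$, witnessed by some $a\in V_{\alpha}$. Passing to a countable elementary submodel $M\prec V_{\theta}$ (for large $\theta$) with $a,x\in M$ and $\omega\subseteq M$, and taking the transitive collapse (which fixes $x$ and every real of $M$), I obtain a countable transitive set $t\ni x$ and an element $b\in t$ with $\psi(b)$ holding in $V$ by $\Delta_{0}$-absoluteness. Coding $t$ by a real $c$, i.e.\ by a wellfounded extensional relation on $\omega$, the set $S$ of all reals coding such a witnessing structure is $\Pi^{1}_{1}(x)$ and, by construction, nonempty. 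The crucial point is that $\alpha$, being a limit of $x$-admissibles, satisfies $\alpha\geq\omega_{\omega}^{\text{CK},x}$ and in particular $\alpha>\omega_{1}^{\text{CK},x}$, so a basis theorem for nonempty $\Pi^{1}_{1}(x)$ sets (via $\Pi^{1}_{1}$-uniformization, placing a member Turing-below the complete $\Pi^{1}_{1}(x)$ set $\mathcal{O}^{x}$ and hence into $L_{\omega_{1}^{\text{CK},x}+\omega}[x]$) produces a witness $c\in S$ constructed from $x$ well below $\alpha$. Choosing an $x$-admissible $\beta<\alpha$ above the level of $c$, the transitive collapse $t$ of $c$ is formed inside $L_{\beta}[x]$ by $\Sigma$-recursion, so $t\in L_{\beta}[x]$ and thus $b\in L_{\beta}[x]$; since $\psi$ is $\Delta_{0}$ and $b$ witnesses it, $L_{\beta}[x]\models\exists y\,\psi(y)$, and therefore $L_{\alpha}[x]\models\phi$.

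The main obstacle is precisely the bounding step: one must guarantee that the constructible witness appears strictly below $\alpha$. This is exactly where the hypothesis ``limit of $x$-admissibles'' is indispensable, since it forces $\alpha$ to dominate $\omega_{1}^{\text{CK},x}$ and all the effective ordinals attached to a $\Pi^{1}_{1}(x)$ search, and simultaneously supplies an admissible level $\beta<\alpha$ closed under the collapse of a coded wellorder. A secondary technical point, to be checked carefully, is that folding the real parameters into the oracle does not disturb the admissibility structure cofinally in $\alpha$, which is what the computation $L_{\gamma}[x]=L_{\gamma}[x\oplus\vec{p}]$ above is meant to secure. Alternatively, and this is the more economical route, one can simply note that the proof of the Jensen--Karp theorem invoked for Lemma~\ref{retractrecog} is entirely structural and relativizes verbatim to the $L[x]$-hierarchy once ``admissible'' is read as ``$x$-admissible'', which yields the statement directly; the reflection argument above is essentially an unpacking of that relativization tailored to the case of real parameters.
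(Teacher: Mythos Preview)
Your final remark --- that the Jensen--Karp argument relativizes verbatim once ``admissible'' is read as ``$x$-admissible'' and $L$ is replaced by $L[x]$ --- is correct, and that is precisely the paper's proof: it defers to the appendix, where the Henkin construction of Jensen and Karp is carried out with the extra parameter $x$, using the $x$-admissibility of some level below $\alpha$ to close the theory under the $\tau$-rule and thereby produce a wellfounded model of $\phi$ that is prim in $x,\tau,\alpha$ and hence lies in $L_{\alpha}[x]$. Your reduction of the auxiliary real parameters to the single oracle $x$ is also fine.

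The detailed ``descriptive'' argument you give before that remark, however, has a genuine gap. The set $S$ of codes for countable transitive witnesses is indeed $\Pi^{1}_{1}(x)$, but there is no basis theorem placing a member of a nonempty $\Pi^{1}_{1}(x)$ set Turing-below $\mathcal{O}^{x}$. Kondo's $\Pi^{1}_{1}$-uniformization only yields a $\Pi^{1}_{1}(x)$-singleton in $S$, and such singletons need not be $\leq_{T}\mathcal{O}^{x}$; you have conflated this with Gandy's basis theorem, which applies to $\Sigma^{1}_{1}$ sets. In fact your argument, if it worked, would prove too much: after the L\"owenheim--Skolem step you never again use the hypothesis that the witness lies in $V_{\alpha}$, so the same reasoning would yield a witness in $L_{\omega_{2}^{\text{CK},x}}[x]$ for \emph{every} true $\Sigma_{1}$ sentence in parameter $x$, i.e.\ that $\omega_{2}^{\text{CK},x}$ is $x$-stable --- which is false, since the least $x$-stable ordinal $\sigma^{x}$ vastly exceeds $\omega_{2}^{\text{CK},x}$. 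The content of Jensen--Karp is exactly to manufacture the constructible witness at the right level, and this genuinely requires the full ``limit of $x$-admissibles'' hypothesis, not merely $\alpha>\omega_{1}^{\text{CK},x}$.
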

\begin{proof}
This is a relativization of the Jensen-Karp theorem in section $5$ of \cite{JK}. The proof more or less relativizes, we elaborate on the necessary changes in the appendix, see Theorem \ref{parameterJK}. 
\end{proof}

\begin{lemma}{\label{ITTMrelrecog}}
Assume that a real $x$ is recognizable relative to a real $y$. Then $x\in L_{\lambda^{x\oplus y}}[y]$.
\end{lemma}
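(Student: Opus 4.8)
The plan is to mirror the proof of Lemma \ref{retractrecog}, replacing the unrelativized Jensen--Karp absoluteness by its relativized form (Lemma \ref{relJK}) with oracle $y$, and running the witness-extraction argument inside the ordinal $\alpha:=\lambda^{x\oplus y}$. Let $P$ be a program recognizing $x$ relative to $y$, so that $P^{z\oplus y}\downarrow=\delta(x,z)$ for every $z\subseteq\omega$; in particular $P^{x\oplus y}\downarrow=1$. By the definition of $\lambda^{x\oplus y}$ as the supremum of ITTM-halting times in the oracle $x\oplus y$, this computation halts in fewer than $\alpha$ steps, so the run together with the real $x$ is an element of $V_{\alpha}$; hence $V_{\alpha}\models\exists z\,(P^{z\oplus y}\downarrow=1)$. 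The formula $\exists z\,(P^{z\oplus y}\downarrow=1)$ is $\Sigma_{1}$ in the oracle parameter $y$, since `$c$ is a halting computation of $P$ on oracle $z\oplus y$ with output $1$' is arithmetic (hence $\Sigma_{0}$ over any transitive model containing the relevant objects) and the quantifiers over $z$ and over the witnessing computation are existential.

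The crucial point is to verify that $\alpha=\lambda^{x\oplus y}$ is a limit of $y$-admissible ordinals, so that Lemma \ref{relJK} applies with oracle $y$ rather than with $x\oplus y$. By Theorem \ref{lambdalimit}, $\lambda^{x\oplus y}$ is a limit of $(x\oplus y)$-admissible ordinals. I would then invoke the standard fact that every $(x\oplus y)$-admissible ordinal $\beta$ is already $y$-admissible: since $y$ is recursive in $x\oplus y$ we have $y\in L_{\beta}[x\oplus y]$ and $L_{\beta}[y]\subseteq L_{\beta}[x\oplus y]$, and a $\Sigma_{1}$-collection instance for $L_{\beta}[y]$ can be reflected by assigning to each element $u$ of the relevant set the least level of $L[y]$ containing a witness. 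This map has $\Sigma_{1}$ graph over the admissible set $L_{\beta}[x\oplus y]$ (using $y$ as a parameter and the uniform definability of the $L[y]$-hierarchy), so by $\Sigma_{1}$-collection there its range is bounded below $\beta$, which supplies the required bound inside $L_{\beta}[y]$. Cofinally many $(x\oplus y)$-admissibles below $\alpha$ are therefore cofinally many $y$-admissibles, so $\alpha$ is a limit of $y$-admissibles.

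Granting this, Lemma \ref{relJK} transfers the $\Sigma_{1}(y)$ statement from $V_{\alpha}$ down to $L_{\alpha}[y]$, giving $L_{\lambda^{x\oplus y}}[y]\models\exists z\,(P^{z\oplus y}\downarrow=1)$. Fix a witness $z\in L_{\lambda^{x\oplus y}}[y]$. Since $z\oplus y\in L_{\lambda^{x\oplus y}}[y]$ and halting ITTM-computations are absolute between transitive models containing their oracle, the statement $P^{z\oplus y}\downarrow=1$ holds in $V$ as well. But $P$ recognizes $x$ relative to $y$, so $P^{z\oplus y}\downarrow=1$ forces $z=x$. Hence $x=z\in L_{\lambda^{x\oplus y}}[y]$, as desired.

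I expect the admissibility-transfer step of the second paragraph to be the main obstacle: the remainder is a routine relativization of Lemma \ref{retractrecog}, but the correct oracle for applying relativized Jensen--Karp is $y$, not $x\oplus y$, and one must argue that $\lambda^{x\oplus y}$ retains the `limit of admissibles' property with respect to the weaker oracle $y$ in order for the absoluteness between $V_{\alpha}$ and $L_{\alpha}[y]$ to be available.
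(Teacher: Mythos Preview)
Your proof is correct and follows essentially the same route as the paper: reduce to the $\Sigma_{1}$ statement $\exists z\,(P^{z\oplus y}\downarrow=1)$, observe that $\lambda^{x\oplus y}$ is a limit of $y$-admissibles because every $(x\oplus y)$-admissible is $y$-admissible, apply the relativized Jensen--Karp lemma (Lemma~\ref{relJK}) with oracle $y$, and conclude by absoluteness of computations. The only difference is that you supply an explicit argument for the admissibility-transfer step, whereas the paper simply asserts it as a fact.
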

\begin{proof}
Here, we use Lemma \ref{relJK}: Since $\lambda^{x\oplus y}$ is a limit of $x\oplus y$-admissible ordinals and each $x\oplus y$-admissible ordinal is in particular $y$-admissible, $\lambda^{x\oplus y}$ is a limit of $y$-admissible ordinals.
As $\exists{z}P^{z\oplus y}\downarrow=1$ is $\Sigma_{1}$, it is hence absolute between $V_{\lambda^{x\oplus y}}$ and $L_{\lambda^{x\oplus y}}[y]$, as
the latter contains the necessary parameter $y$ (as $\lambda^{x\oplus y}$ is a limit of $x\oplus y$-admissibles by Theorem \ref{lambdalimit} and hence of $y$-admissibles). Consequently, it holds in $L_{\lambda^{x\oplus y}}[y]$, so this structure
contains $x$ by absoluteness of computations.
\end{proof}

\begin{lemma}{\label{ITTMmanyoraclesinL}}
Assume that a real $x$ is ITTM-recognizable relative to all $y\in M$, where $M\subseteq\mathfrak{P}(\omega)$ is Borel and non-meager. Then $x\in L_{\lambda^{x}}$.
\end{lemma}
\begin{proof}
The set will contain mutually generics $g_{1},g_{2}$ over $L_{\Sigma^{x}+1}[x]$ by Lemma 30 of \cite{CS}. We have $\lambda^{x\oplus g_{1}}=\lambda^{x\oplus g_{2}}=\lambda^{x}$ by 
Lemma \ref{genericlambda}. So, by Lemma \ref{ITTMrelrecog}, Lemma \ref{relforc} and Lemma $28$ of \cite{CS}, 
we will have $x\in L_{\lambda^{x\oplus g_{1}}}[x\oplus g_{1}]\cap L_{\lambda^{x\oplus g_{2}}}[x\oplus g_{2}]=L_{\lambda^{x}}[g_{1}]\cap L_{\lambda^{x}}[g_{2}]=L_{\lambda^{x}}$, as desired.
\end{proof}


\begin{thm}{\label{ITTMcomeager}}
Let $x$ be uniformly ITTM-recognizable from all elements $y$ of a comeager set $Y$. Then $x$ is ITTM-recognizable.
\end{thm}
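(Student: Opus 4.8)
The plan is to adapt the forcing argument of Theorem \ref{ITRMcomeager} to ITTMs, using the supremum $\lambda^x$ of halting times in place of the register-indexed bounds $\omega_{n+1}^{CK,x}$, and to supply the stratification that ITTMs lack by a boundedness argument over the admissible structure $L_{\lambda^x}[x]$.

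First I would normalise the oracle set. Since the set of reals Cohen-generic over $L_{\Sigma^{x}+1}[x]$ is comeager, I may intersect $Y$ with it and assume without loss of generality that every $y\in Y$ is Cohen-generic over $L_{\Sigma^{x}+1}[x]$; by Lemma \ref{genericlambda} this forces $\lambda^{x\oplus y}=\lambda^{x}$ for all $y\in Y$. Fixing any $y_{0}\in Y$, the computation $P^{x\oplus y_{0}}$ halts with output $1$, and because its halting time is $<\lambda^{x\oplus y_{0}}=\lambda^{x}$ it takes place inside $L_{\lambda^{x}}[x\oplus y_{0}]=L_{\lambda^{x}}[x][y_{0}]$ (Corollary \ref{relforc}, using that $\lambda^{x}$ is a limit of $x$-admissibles by Theorem \ref{lambdalimit}). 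The forcing theorem for admissible sets (Lemma $10.10$ of \cite{Ma}) then yields a finite condition $p\subseteq y_{0}$ with $p\Vdash P^{x\oplus\dot g}\!\downarrow=1$ over $L_{\lambda^{x}}[x]$.

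The crucial step — and the one I expect to be the main obstacle, since it is exactly the point where the ITRM proof used a feature that ITTMs lack — is to bound the forced halting time strictly below $\lambda^{x}$. Here I would exploit that $L_{\lambda^{x}}[x]$ is admissible ($\lambda^{x}$ is $x$-admissible): the Cohen forcing relation is $\Delta_{1}$-definable over $L_{\lambda^{x}}[x]$, and the set of conditions $q\le p$ that decide the exact halting time is dense below $p$, so $\Sigma$-replacement in $L_{\lambda^{x}}[x]$ bounds the decided times by some $\delta_{0}<\lambda^{x}$. Thus $p\Vdash$ ``$P^{x\oplus\dot g}$ halts before $\delta_{0}$ with output $1$'', and since $\delta_{0}<\lambda^{x}$ a code for $L_{\delta_{0}+1}[x]$ is writable in the oracle $x$. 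This is precisely the promised restriction of the relevant halting times to an ordinal $\alpha<\lambda^{x}$, and it is what replaces the stratification of Theorem \ref{hp}.

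Finally I would describe the recognizing program $R$. On oracle $z$, $R$ searches through the writable (in $z$) ordinals $\delta<\lambda^{z}$ and finite conditions $q$ for a witness to ``$q\Vdash_{L_{\delta}[z]}P^{z\oplus\dot g}$ halts before $\delta$ with output $1$''; this forcing relation is ITTM-computable in $z$ once a code for $L_{\delta}[z]$ has been written. $R$ outputs $1$ if a witness is found and $0$ once the search over $\delta<\lambda^{z}$ is exhausted. For $z=x$ the pair $(\delta_{0},p)$ of the previous paragraph is such a witness, so $R^{x}\!\downarrow=1$. For $z\neq x$, any witness $(\delta,q)$ would, by the forcing theorem and the absoluteness of halting computations, make $P^{z\oplus g}\!\downarrow=1$ for a comeager set of $g\supseteq q$; intersecting with the comeager $Y$ would produce some $g\in Y$ with $P^{z\oplus g}\!\downarrow=1$ despite $z\neq x$, contradicting that $P$ recognizes $x$ from $Y$. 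Hence no witness exists and $R^{z}\!\downarrow=0$. The remaining technical point is totality: I would argue that $R$ halts on every oracle because an ITTM in oracle $z$ can detect, via the stabilisation of the universal machine at the $\lambda^{z}$--$\zeta^{z}$--$\Sigma^{z}$ level, when the search through the writable ordinals below $\lambda^{z}$ has been exhausted, so that the negative branch also terminates.
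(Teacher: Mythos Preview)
The central gap is the halting of your program $R$ on oracles $z\neq x$. Your claim that an ITTM can ``detect, via the stabilisation of the universal machine at the $\lambda^{z}$--$\zeta^{z}$--$\Sigma^{z}$ level, when the search through the writable ordinals below $\lambda^{z}$ has been exhausted'' is not correct: stabilisation is an \emph{eventual}-writability phenomenon, not a halting one, and no halting ITTM can certify that it has enumerated all writable ordinals in its oracle (if it could, it would decide its own relativised halting problem). So as written, $R^{z}$ diverges whenever no witness $(\delta,q)$ exists, and you have not produced a recognizer. The paper's procedure is structured precisely to avoid this: it searches for a pair $(L_{\xi}[z],g)$ with $g\supseteq p$ generic over $L_{\xi+1}[z]$ and $\xi$ a $z$-admissible limit of $z$-admissibles above the halting time of $P^{z\oplus g}$, and it proves that such a pair exists and is writable from \emph{every} $z$. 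The search therefore always terminates, and the discrimination between $z=x$ and $z\neq x$ is done afterwards by two decidable checks on the found pair (whether $z\in L_{\xi}$ and whether $z$ is the unique $y\in L_{\xi}$ with $P^{y\oplus g}\downarrow=1$ in fewer than $\xi$ steps), analysed via a case split on $\xi\geq\alpha$ versus $\xi<\alpha$.

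There are also secondary divergences worth noting. Your route to a bound $\delta_{0}<\lambda^{x}$ is $\Sigma$-replacement inside $L_{\lambda^{x}}[x]$; the paper instead uses a Baire-category pigeonhole: the halting-time map $\tau\colon Y\to\lambda^{x}$ has comeager domain and countable range, so some fibre $\tau^{-1}[\zeta]$ is non-meager, and one takes $\alpha$ to be the least admissible limit of admissibles above that $\zeta$. The paper then needs (and proves, via \emph{mutually} generic $g_{1},g_{2}$ in that fibre together with Lemma~\ref{relJK}) that $x\in L_{\alpha}$, not merely $x\in L_{\lambda^{x}}[x]$; this unrelativised membership is exactly what makes the check ``$z\in L_{\xi}$'' meaningful in the recognition procedure. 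Your sketch never leaves $L_{\delta}[z]$-levels relativised to the input, and you would also need to justify why a condition forcing over $L_{\lambda^{x}}[x]$ still forces the same $\Sigma_{1}$ statement over the much smaller $L_{\delta_{0}}[z]$ that your program actually inspects.
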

\begin{proof}
 Let $P$ be an ITTM-program that recognizes $x$ relative to every element $y\in Y$. By Lemma \ref{ITTMmanyoraclesinL}, we have $x\in L_{\lambda^{x}}$. The set $C_{\beta}^{z}$ of reals Cohen-generic over $L_{\beta}[z]$ is comeager for every countable ordinal $\beta$
and every real $z$. We may hence assume without loss of generality that $Y\subseteq C_{\Sigma^{x}+1}^{x}$. By Lemma \ref{genericlambda} then, $\lambda^{x\oplus y}=\lambda^{x}$ holds for all $y\in Y$.
Hence $P^{x\oplus y}\downarrow=1$ in less than $\lambda^{x}$ many steps for every $y\in Y$. For $y\in Y$, let $\tau(y)$ be the halting time of $P^{x\oplus y}$. Then $\tau$ (as a function from $\mathbb{R}$ to $\lambda^{x}$)
has comeager pre-image and countable domain, hence there is some
$\zeta<\lambda^{x}$ such that $\tau^{-1}[\zeta]$ is not meager (since otherwise, $\mathbb{R}$ was a countable union of meager sets, i.e. meager). Let $\zeta<\lambda^{x}$ be minimal with this property, and let $\bar{Y}=\tau^{-1}[\zeta]$.\\ 
Let $\alpha$ be the smallest admissible limit of admissible ordinals greater than $\zeta$. Then $\alpha+1<\lambda^{x}$ by Theorem \ref{lambdalimit}.
 
We claim that $x\in L_{\alpha}$. To see this, let $g_{1},g_{2}$ be mutually\\ (Cohen-)generic over $L_{\Sigma^{x}}$ and elements of $\bar{Y}$, which exist by
Lemma \ref{genericlambda}: First, as $\bar{Y}$ is not meager and $\Sigma^{x}$ is countable, $\bar{Y}$ contains a real $g_{1}$ generic over $L_{\Sigma^{x}}$. Again by Lemma \ref{genericlambda},
$\bar{Y}$ contains a real $g_{2}$ generic over $L_{\Sigma^{x}}[g_{1}]$. By standard facts on Cohen-forcing (see e.g. Lemma $30$ of \cite{CS}), $g_1$ and $g_2$ are mutually generic over $L_{\Sigma^{x}}$.

 So $P^{x\oplus g_{1}}\downarrow=1$ and $P^{x\oplus g_{2}}\downarrow=1$. As therefore $V_{\alpha}\models\exists{z}P^{z\oplus g_{1}}\downarrow=1\wedge P^{z\oplus g_{2}}\downarrow=1$, we have
$L_{\alpha}[g_{1}]\models\exists{z}P^{z\oplus g_{1}}\downarrow=1$ and $L_{\alpha}[g_{2}]\models\exists{z}P^{z\oplus g_{2}}\downarrow=1$ by Lemma \ref{relJK}. 

As $g_{1},g_{2}\in Y$ (so $P$ recognizes $x$ relative to $g_1$ and $g_2$) and by absoluteness of computations, the elements $z_{1}\in L_{\alpha}[g_{1}]$ and $z_{2}\in L_{\alpha}[g_{2}]$ witnessing 
$\exists{z}P^{z\oplus g_{1}}\downarrow=1$ and $L_{\alpha}[g_{2}]\models\exists{z}P^{z\oplus g_{2}}\downarrow=1$ must both be equal to $x$, so 
we have $x\in L_{\alpha}[g_{1}]$ and $x\in L_{\alpha}[g_{2}]$, so that finally $x\in L_{\alpha}[g_{1}]\cap L_{\alpha}[g_{2}]=L_{\alpha}$ by mutual genericity of $g_{1},g_{2}$ over $L_{\lambda^{x}}$
and hence over $L_{\alpha}\subseteq L_{\lambda^{x}}$.

$P^{z\oplus g_{1}}$ may not stop in less than $\alpha$ many steps for each $z\in L_{\alpha}$; however, by absoluteness of computations and since $P$ recognizes $z$ from $g_{1}$, it only does so with output $1$ if $z=x$. 
So we have that $L_{\alpha}[g_{1}]\models\forall{z}P^{z\oplus g_{1}}\downarrow=1\leftrightarrow x=z$. 
By the forcing theorem for admissible sets (see e.g. Lemma 32 of \cite{CS}), 
there is a finite $p\subseteq\omega$ such that $p\Vdash\forall{z}P^{z\oplus g_{1}}\downarrow=1\leftrightarrow x=z$ over $L_{\alpha}$.
Consequently, the same holds for every real $g\supseteq p$ which is Cohen-generic over $L_{\alpha+1}$.\\

We can now recognize $x$ by the following procedure: Given a real $z$ in the oracle, we let all ITTM-programs run simultaneously in the oracle $z$ and check the output whenever a computation stops
 until we find a pair $(L_{\xi}[z],g)$\footnote{This is a slight abuse of notation. The first element should
of course actually be a real number coding $L_{\xi}[z]$.} such that $g\supseteq p$, $g$ is generic over $L_{\xi+1}[z]$ and
$\xi$ is a $z$-admissible limit of $z$-admissible ordinals greater than the halting time of $P^{z\oplus g}$. 
(We shall see below that such a pair exists for every $z$ and is an element of $L_{\lambda^{z}}[z]$ and thus
computable by some ITTM in the oracle $z$ so that the search always terminates.) 
Now check (1) whether $z\in L_{\xi}$ and 
(2) whether $L_{\xi}\models\forall{y\subseteq\omega}P^{y\oplus g}\downarrow=1\leftrightarrow y=z$, i.e. whether $z$ is the unique element $y$ of $L_{\xi}$
such that $P^{y\oplus g}\downarrow=1$ in less than $\xi$ many steps. This can be done by evaluating a recursive truth predicate in $L_{\xi}$. We claim that, if either fails, then $z\neq x$, otherwise $z=x$.\\

To see that this procedure works, we first observe that such a pair $(L_{\xi}[z],g)$ always exists and is ITTM-computable from $z$:\\
Given the oracle $z$, pick some $\hat{g}\supseteq p$ Cohen-generic over $L_{\Sigma^{z}+1}[z]$. Then $P^{z\oplus\hat{g}}\downarrow$ in less than $\lambda^{z}$ many steps by Lemma \ref{genericlambda}.
Let $\hat{\zeta}$ be the halting time of $P^{z\oplus\hat{g}}\downarrow$,
 and let $\hat{\alpha}$ be the smallest $z$-admissible limit of $z$-admissible ordinals greater than $\hat{\zeta}$. By Theorem \ref{lambdalimit}, we have $\hat{\alpha}<\lambda^{z}$.
Then $L_{\hat{\alpha}}[z][\hat{g}]\models P^{z\oplus\hat{g}}\downarrow$.\footnote{We may use the generic extension and the relativized constructibility equivalently by Theorem \ref{relforc}.} 
 Again by the forcing theorem over admissible sets, there is $q\subseteq\hat{g}$ such that $q\Vdash P^{z\oplus \hat{g}}\downarrow$ over $L_{\hat{\alpha}}[z]$.
 As $\hat{g}\supseteq p$, $q$ and $p$ are compatible; let $s=q\cup p$. 
Now, if $\hat{g}^{\prime}\supseteq s$ is another generic over $L_{\hat{\alpha}+1}[z]$,
then we still have $L_{\hat{\alpha}}[z][\hat{g}]\models P^{z\oplus\hat{g}^{\prime}}\downarrow$; so the halting time of $P^{z\oplus\hat{g}^{\prime}}$ is less than $\hat{\alpha}$. 
As the projectum in $L[z]$ will drop 
to $\omega$ between $\hat{\alpha}$ and $\lambda^{z}$ (it drops at every halting time, of which $\lambda^{z}$ is the supremum), making $L_{\hat{\alpha}}[z]$ countable in $L_{\lambda^{z}}[z]$, so that the Rasiowa-Sikorski-construction
can be carried out inside $L_{\lambda^{z}}[z]$ with the result that 
such a real $\hat{g}$ will (along with a real coding $L_{\hat{\alpha}}[z]$) be contained in $L_{\lambda^{z}}[z]$.

Now, if $z=x$, then, as $g$ is generic and extends $p$ which forces $P^{x\oplus g}$ to converge to $1$, the procedure will clearly halt with output $1$.
 So assume towards a contradiction that our procedure stops with output $1$ in the oracle $z\neq x$. 
Let $(L_{\xi}[z],g)$ be the pair found in the execution of the procedure. In particular, this means that $z\in L_{\xi}$. We distinguish two cases:\\

\textbf{Case 1}: $\xi\geq\alpha$. Then $g\supseteq p$, being generic over $L_{\xi+1}$, is also generic over $L_{\alpha+1}$. 
Hence $P^{x\oplus g}\downarrow=1$ in less than $\alpha\leq\xi$ many steps by the choice of $p$ and the fact that $x\in L_{\alpha}\subseteq L_{\xi}$.
So $z$ is not the only element $y$ of $L_{\xi}$ with $P^{y\oplus g}\downarrow=1$ in less than $\xi$ many steps, contradicting the assumption that our procedure stopped with output $1$.\\

\textbf{Case 2}: $\xi<\alpha$. As $z\in L_{\xi}$ and $\xi$ is admissible, we have $L_{\xi}[z]=L_{\xi}$
By the forcing theorem over admissible sets once more, there is a finite $q\subseteq g$ such that $q\Vdash P^{z\oplus g}\downarrow=1$ over $L_{\xi}$ (thus in less than $\xi$ many steps).
As $g\supseteq p$, $q$ and $p$ are compatible; let $s=q\cup p$. 
Now pick $\bar{g}\supseteq s$ generic over $L_{\alpha+1}$.
 Then, as $\bar{g}\supseteq p$, $x$ should be the only element $y$ of $L_{\alpha}$ with $P^{y\oplus g}\downarrow=1$ in less than $\alpha$ many steps; 
but as $\bar{g}\supseteq q$, we also have that $P^{z\oplus\bar{g}}\downarrow=1$ in less than $\xi<\alpha$ many steps, contradicting the assumption that $z\neq x$.\\

Hence the procedure identifies $x$, as desired.
\end{proof}

As in the ITRM-part, we can deduce:

\begin{corollary}{\label{ITTMnonmeager}}
\begin{enumerate}
\item Let $Y\subseteq[0,1]$ be non-meager, and let $x\subseteq\omega$ be uniformly ITTM-recognizable in $Y$. Then $x$ is ITTM-recognizable.
\item Assume that every $\bf\Sigma^{1}_{2}$-set of reals has the Baire property. Let $Y$ be a non-meager set, $x\subseteq\omega$ and assume that, for every $y\in Y$, there is some ITTM-program $P$
such that $P^{z\oplus y}\downarrow=1$ if and only if $z=x$. Then $x$ is ITTM-recognizable.
\end{enumerate}
\end{corollary}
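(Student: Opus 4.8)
The plan is to derive both statements from Theorem \ref{ITTMcomeager} by reproducing, almost verbatim, the two reductions carried out for ITRMs in Corollary \ref{ITRMnonmeager} and Corollary \ref{MAnonuniform}; the only substantive change is that the ITTM comeager theorem is unrelativized, so no auxiliary oracle $z$ must be threaded through the argument. Part (1) mirrors Corollary \ref{ITRMnonmeager} and part (2) mirrors Corollary \ref{MAnonuniform}.

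For part (1), I would first use that $Y$ is Borel and non-meager, hence has the Baire property, to find a basic clopen set $I=\{tw:w\in{}^{\omega}2\}$ (for some finite binary string $t$) in which $Y$ is comeager; passing to $Y\cap I$ I may assume $Y\subseteq I$. Under the canonical homeomorphism $w\mapsto tw$ the preimage $Y':=\{w:tw\in Y\}$ is comeager. I would then define an ITTM-program $P'$ that, on oracle $z\oplus w$, internally prepends the fixed string $t$ to $w$ and simulates $P$ on $z\oplus tw$; since $P$ halts on every oracle, so does $P'$. For $w\in Y'$ we have $tw\in Y$, whence $P'^{z\oplus w}\downarrow=1$ iff $P^{z\oplus tw}\downarrow=1$ iff $z=x$, so $P'$ uniformly recognizes $x$ from the comeager set $Y'$. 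Theorem \ref{ITTMcomeager} then yields that $x$ is ITTM-recognizable.

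For part (2), I would, for each ITTM-program $P_i$, let $Y_{P_i}$ be the set of $y$ for which $P_i$ fully recognizes $x$ from $y$, i.e. $P_i^{z\oplus y}\downarrow=1\leftrightarrow z=x$ for all $z$. Every $y\in Y$ lies in some $Y_{P_i}$, so $Y\subseteq\bigcup_{i\in\omega}Y_{P_i}$. Assuming each $Y_{P_i}$ is $\Pi^{1}_{2}$ in $x$, its complement is $\bf\Sigma^{1}_{2}$ and hence, by hypothesis, has the Baire property, so $Y_{P_i}$ is Baire as well. As $Y$ is non-meager it cannot be covered by countably many meager sets, so some $Y_{P_k}$ is non-meager; being Baire it is comeager in some interval, and the shift construction of part (1) turns this into a comeager set of oracles from which $P_k$ uniformly recognizes $x$. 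A final appeal to Theorem \ref{ITTMcomeager} completes the argument.

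The one place where the ITTM argument genuinely departs from the ITRM one, and the step I expect to cost the most care, is the pointclass estimate for $Y_{P}$. For ITRMs a computation is coded by a real whose validity predicate is $\Pi^{1}_{1}$; for ITTMs a halting computation is a transfinite but countable sequence of tape configurations, so a code $b$ must encode an ordinal length together with the configurations, and checking that $b$ records a run of $P$ obeying the successor rule and the $\liminf$ limit rule and halting with output $1$ still amounts to a well-foundedness test plus arithmetic conditions, hence remains $\Pi^{1}_{1}$ in $b$ and the oracle. Consequently the no-false-positive clause ``$\forall a\,\forall b$: if $b$ codes the halting-with-$1$ run of $P$ on $a\oplus y$ then $a=x$'' has a $\Sigma^{1}_{1}$ matrix behind two universal real quantifiers and is $\Pi^{1}_{2}$ in $x$; and since $P$ halts on every oracle, the positive clause ``$P^{x\oplus y}\downarrow=1$'' is both $\Sigma^{1}_{2}$ (a code exists) and $\Pi^{1}_{2}$ (every code outputs $1$), hence $\Delta^{1}_{2}$ and absorbed into $\Pi^{1}_{2}$. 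I would write out this coding and the resulting bound explicitly, as it is the only ingredient not already supplied by the ITRM development.
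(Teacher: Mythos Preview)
Your proposal is correct and follows precisely the approach the paper indicates, which is nothing more than the remark ``As in the ITRM-part, we can deduce''; in fact your treatment is more detailed than the paper's, since the paper supplies no proof for this corollary and your discussion of the pointclass estimate for $Y_P$ in the ITTM case fills a gap the paper leaves implicit. One small point: in part~(2) you write ``since $P$ halts on every oracle'' to obtain $\Delta^{1}_{2}$ for the positive clause, but unlike the ITRM situation there is no halting-problem solver for ITTMs that makes this assumption harmless, and the statement of the corollary does not require it; however this does not damage the argument, because $Y_P$ is then $\Sigma^{1}_{2}\cap\Pi^{1}_{2}$, and under the hypothesis any Boolean combination of $\boldsymbol{\Sigma}^{1}_{2}$ sets still has the Baire property, which is all you need.
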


\subsection{Other Machines}

Other notable machine models of infinitary computability include 
$\alpha$-Turing machines (see \cite{KS}), $\alpha$-register machines (see \cite{Ko2} and \cite{Ca}), and ordinal Turing machines (OTMs) (see \cite{Ko1}) with and without ordinal parameters. 

Concerning OTMs without parameters, we have that, by \cite{Ca}, recognizability
equals computability, and the proof relativizes: Roughly, there is a non-halting OTM-program $Q$ that, given the oracle $x$, enumerates $L[x]$. By Shoenfield absoluteness, if $P$ is a program
that parameter-freely recognizes $y$ relative to $x$, then, as `There is a real $z$ such that $P^{z\oplus x}\downarrow=1$' is a $\Sigma_{1}$-statement which, as it holds in $V$ by assumption, must also hold in $L[x]$. One can thus
compute $y$ in the oracle $x$ by a parameter-free OTM by enumerating $L[x]$, running $P^{z\oplus x}$ whenever a new real $z$ is produced and halting and outputting $z$ once $P^{z\oplus x}\downarrow=1$.
As the claim that parameter-free OTM-computability from all elements of a non-meager or a positive set of oracles implies OTM-computability is independent of ZFC by section 2.1 of \cite{CS}, the same holds for the recognizable analogue.

Recognizability for OTMs with ordinal parameters is a more delicate issue.
It is shown in \cite{Ca} that $0^{\sharp}$ is OTM-recognizable in the parameter $\omega_{1}$, and the same argument can be applied to show the recognizability of reals even more remote from $L$. One of the results
of \cite{CS1} is that, under the assumption that $M_{1}^{\sharp}$ exists, the closure of $\emptyset$ under relativized parameter-OTM recognizability (for real numbers) coincides with $\mathbb{R}^{M_{1}}$,
where $M_{1}$ is the mouse for a Woodin cardinal. Trivially, as every constructible real is computable and hence recognizable by some parameter-OTM, the claim that for parameter-OTMs, recognizability
from all elements of a non-meager set implies recognizability holds in $L$. 
On the other hand, we have: 

\begin{lemma}{\label{whomgennonrec}}
 Let $\mathbb{P}$ be a weakly homogenous notion of forcing (i.e. for
any two conditions $p,q$, there is an automorphism $\pi$ of $\mathbb{P}$ such that $\pi(p)$ and $q$ are compatible ), $M\models\text{ZFC}$ a transitive model containing $\mathbb{P}$, $G$ a $\mathbb{P}$-generic filter over $M$ and $x\subseteq\omega$
such that $x\in M[G]\setminus M$. Then $x$ is not recognizable.
\end{lemma}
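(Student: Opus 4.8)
The plan is to show that a recognizable real is definable, over any transitive model of $ZFC$ to which it belongs, by an absolute formula using only the recognizing program together with its ordinal parameter, and then to use weak homogeneity to push that definition down into $M$. So suppose, towards a contradiction, that $x$ is recognized by an OTM-program $P$ with ordinal parameter $\rho$; writing $P^{z}(\rho)$ for the run of $P$ on the oracle $z$ with parameter $\rho$, this means $P^{z}(\rho)\downarrow=\delta(x,z)$ for every real $z$, and in particular $P$ halts on every oracle. The argument needs $\rho$ to be a ground-model object, $\rho\in M$; this is automatic in the main case of interest, where $M$ is an inner model (so that $\mathrm{Ord}\subseteq M$), such as $M=L$, and I return to the general case at the end.

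The first step is an absoluteness transfer. Since $M[G]\subseteq V$ and halting OTM-computations, together with their outputs, are absolute between transitive models of $ZFC$ containing the relevant ordinals, every real $z\in M[G]$ satisfies $M[G]\models P^{z}(\rho)\downarrow=\delta(x,z)$. Consequently $x$ is still the unique real recognized by $P$ with parameter $\rho$ inside $M[G]$: downward absoluteness gives $M[G]\models P^{x}(\rho)\downarrow=1$, and if some $z'\neq x$ in $M[G]$ were recognized there, upward absoluteness would contradict recognition in $V$. Hence
\[
x=\{\,n\in\omega: M[G]\models\exists z\,(P^{z}(\rho)\downarrow=1\wedge n\in z)\,\},
\]
so that $x$ is definable over $M[G]$ from the ground-model parameters $\rho$ and $P$.

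The second step invokes weak homogeneity. For each $n\in\omega$ let $\psi_{n}$ be the formula $\exists z\,(P^{z}(\check\rho)\downarrow=1\wedge\check n\in z)$, whose only parameters are the check-names of $\rho$, $P$ and $n$. A standard conjugation argument shows that the weakest condition $1_{\mathbb{P}}$ decides each $\psi_{n}$: given $p\Vdash\psi_{n}$ and $q\Vdash\neg\psi_{n}$, pick an automorphism $\pi$ of $\mathbb{P}$ with $\pi(p)$ compatible with $q$; since $\pi$ fixes every check-name, $\pi(p)\Vdash\psi_{n}$ while $\pi(p)$ is compatible with a condition forcing $\neg\psi_{n}$, a contradiction. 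Therefore
\[
x=\{\,n\in\omega: 1_{\mathbb{P}}\Vdash^{M}_{\mathbb{P}}\psi_{n}\,\},
\]
and, as the forcing relation is definable over $M$ and $x\subseteq\omega$, Separation in $M$ yields $x\in M$, contradicting $x\in M[G]\setminus M$.

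The step I expect to be the main obstacle is the handling of the parameter $\rho$. Weak homogeneity only neutralizes parameters that are check-names of ground-model sets, so the argument genuinely requires $\rho\in M$; this holds whenever $M$ is an inner model, but it can fail for a transitive set $M$ with $\rho\ge\mathrm{Ord}^{M}$, where $\check\rho$ is simply unavailable over $M$ and the recognition of $x$ cannot be localized within $M[G]$. For the intended applications (weakly homogeneous real forcings such as Cohen forcing over $L$ or over an inner model) the parameter always lies in $M$, so this difficulty does not arise; and for the machine models without ordinal parameters, such as ITTMs and ITRMs, there is no parameter at all, so the definability-plus-homogeneity argument goes through verbatim.
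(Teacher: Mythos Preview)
Your argument is correct. It uses the same two ingredients as the paper---absoluteness of halting OTM-computations and weak homogeneity---but packages them differently.

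The paper works with a name $\dot{x}$ for $x$: a condition $p\in G$ forces that $\check{P}$ recognizes $\dot{x}$ in the parameter $\check{\alpha}$; since $p$ cannot decide every bit of $\dot{x}$ (else $x\in M$), one finds extensions $q_{0},q_{1}\leq p$ deciding some bit of $\dot{x}$ differently, uses an automorphism $\pi$ making $\pi(q_{1})$ compatible with $q_{0}$, and in a generic $G'$ containing both obtains two distinct reals $\dot{x}^{G'}\neq\pi(\dot{x})^{G'}$, each accepted by $P$ in $M[G']$---contradicting that $P$ recognizes at most one real. You instead observe that $x$ is definable in $M[G]$ by a formula with only ground-model parameters and invoke directly the standard consequence of weak homogeneity that $1_{\mathbb{P}}$ decides all such statements, so $x\in M$. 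Your route is a bit shorter and appeals to a known lemma; the paper's route is more hands-on, producing the offending pair of reals explicitly rather than via a definability transfer.

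You are also more explicit than the paper about the hypothesis $\rho\in M$: the paper silently writes $\check{\alpha}$ for the parameter, which presupposes exactly this. Both arguments genuinely need it (and need the relevant halting computations to live inside $M[G]$), and both are unproblematic in the intended application $M=L$.
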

\begin{proof}
 Assume otherwise, and fix a weakly homogenous forcing notion $\mathbb{P}$, a transitive $M\models\text{ZFC}$ containing $\mathbb{P}$, a $\mathbb{P}$-generic filter $G$ over $M$ and
a real $x\in M[G]\setminus M$ such that, for some OTM-program $P$ and some ordinal $\alpha$, $P$ recognizes $x$ in the parameter $\alpha$. By absoluteness of computations, this means 
in particular that $P$ recognizes $x$ in the parameter $\alpha$ in $M[G]$. 

By the forcing theorem, there is then a condition $p\in G$ such that
$p$ forces that $\check{P}$ recognizes $\dot{x}$ in the parameter $\check{\alpha}$,
where we denote by $\check{z}$ the canonical name for $z$.
If $p$ would decide every bit of $x$, then we would have $x\in M$, contradicting our assumption. Let $q_{0},q_{1}$ be two strengthenings of $p$ that decide some bit differently, say $q_{0}\Vdash\dot{x}(i)=0$ and
$q_{1}\Vdash\dot{x}(i)=1$ with $i\in\omega$, and let $\pi$ be an automorphism of $\mathbb{P}$ such that $\pi(q_{1})$ and $q_{0}$ are compatible. 
Let $G^{\prime}$ be a filter containing $q_{0}$ and $\pi(q_{1})$. Then, as $q_{0}$ and $q_{1}$ strengthen $p$ which forces that $\dot{x}$ is recognized by $P$ in the parameter $\alpha$, we have 
$q_{0}\Vdash\check{P}^{\dot{x}}(\check{\alpha})\downarrow=1$, $\pi(q_{1})\Vdash \check{P}^{\pi(\dot{x})}(\check{\alpha})\downarrow=1$ and
$\pi(q_{1})\Vdash\pi(\dot{x})(\pi(i))=\pi(1)$, i.e. $\pi(q_{1})\Vdash\pi(\dot{x})(i)=1$. Hence in $M[G^{\prime}]$, we have $x_{0}:=\dot{x}^{G^{\prime}}\neq\pi(\dot{x})^{G^{\prime}}=:x_{1}$,
but also both $P^{x_{0}}(\alpha)\downarrow=1$ and $P^{x_{1}}(\alpha)\downarrow=1$. On the other hand, as $G^{\prime}$ contains $p$, $P$ recognizes some real number in the parameter $\alpha$, a contradiction.
This shows that no real that is added by a weakly homogenous forcing is recognizable. 
\end{proof}

\begin{defini}{\label{laverdef}}
 Let $\mathbb{L}$ denote Laver forcing (see e.g. Definition 28.15 of \cite{Je}). Thus, $\mathbb{L}$ consists of trees $p$ of finite sequences of natural numbers with the properties that
(i) there is a maximal element $t_{p}\in p$, called the `stem' of $p$, such that every $s\in p$ extends $t_{p}$ or is an initial segment thereof and (ii) every $t\in p$ that extends $t_{p}$ has infinitely many successors that are by
exactly one element longer than $t$. The partial ordering on $\mathbb{L}$ is just the subset relation.
\end{defini}

\begin{lemma}{\label{laverwh}}
 Laver forcing is weakly homogenous.
\end{lemma}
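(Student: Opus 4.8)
The plan is to produce, for any two conditions $p,q\in\mathbb{L}$, an automorphism $\pi$ of $\mathbb{L}$ such that $\pi(p)$ and $q$ are compatible, using the automorphisms induced by a coherent system of permutations of successor nodes. Given a family $h=(h_t)_{t\in\omega^{<\omega}}$ of permutations of $\omega$, define $\Phi_h\colon\omega^{<\omega}\to\omega^{<\omega}$ recursively by $\Phi_h(\langle\rangle)=\langle\rangle$ and $\Phi_h(u\,{}^\frown\langle n\rangle)=\Phi_h(u)\,{}^\frown\langle h_{\Phi_h(u)}(n)\rangle$, and set $\pi_h(p):=\Phi_h[p]=\{\Phi_h(t):t\in p\}$. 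Since each $h_t$ is a bijection, $\Phi_h$ is a bijection of $\omega^{<\omega}$ preserving the initial-segment relation, so it carries Laver trees to Laver trees (the image of an infinite successor set is infinite) and its inverse is of the same form; hence $\pi_h$ is an automorphism of $(\mathbb{L},\subseteq)$. I will choose $h$ so that $\pi_h(p)$ and $q$ admit a common Laver refinement. Writing $s_p,s_q$ for the stems, I may assume $|s_p|\le|s_q|$: the relation ``some automorphism makes the conditions compatible'' is symmetric, because if $\pi(p)$ and $q$ share a refinement $r$ then $\pi^{-1}(r)$ refines both $p$ and $\pi^{-1}(q)$, so solving the case with the shorter stem first and applying the inverse automorphism handles the other case.

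First I would align the stems. Proceeding along the stem, I choose the permutations so that $\Phi_h(s_p)=s_q\restriction|s_p|$; concretely, assuming inductively $\Phi_h(s_p\restriction k)=s_q\restriction k$, I pick $h_{s_q\restriction k}$ to send $s_p(k)$ to $s_q(k)$. Then I extend $\Phi_h$ up the remaining part of $q$'s stem by selecting a branch of $p$ above $s_p$: having mapped a node of $p$ onto $s_q\restriction j$ for $|s_p|\le j<|s_q|$, I pick one of its (infinitely many) successors in $p$ and let the relevant permutation send it to $s_q(j)$, so that $\Phi_h$ maps this branch onto $s_q$. This guarantees that the whole path $s_q\restriction j$ for $j\le|s_q|$ lies in $\pi_h(p)$, while it is of course $q$'s own stem. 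Thus $\pi_h(p)$ has stem $s_q\restriction|s_p|$, an initial segment of $s_q$, and $s_q\in\pi_h(p)\cap q$.

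Next I align the branching above the common stem $s_q$. For each node $t\in q$ with $t\supseteq s_q$ let $B_t\subseteq\omega$ be its infinite successor set in $q$, and let $A_t\subseteq\omega$ be the infinite successor set in $p$ of the corresponding node $\Phi_h^{-1}(t)$; since both are infinite, I can choose the permutation $h_t$ so that $h_t[A_t]\cap B_t$ is infinite (map infinitely many elements of $A_t$ into $B_t$ and complete to a permutation). Unused permutations are set to the identity. I then define $r$ to be the tree with stem $s_q$ in which, at each node $t\supseteq s_q$, I keep exactly the successors lying in $h_t[A_t]\cap B_t$. This $r$ is everywhere infinitely branching above $s_q$, hence a Laver condition, and by construction $r\subseteq\pi_h(p)\cap q$, so $r\le\pi_h(p)$ and $r\le q$; thus $\pi_h(p)$ and $q$ are compatible, as required.

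The main obstacle is the bookkeeping in the recursive definition of $\Phi_h$, in particular making sure the permutations prescribed on the stem (step two) and on the successor sets above $s_q$ (step three) are chosen at disjoint sets of nodes and are consistent with the recursion that defines $\Phi_h$ as a genuine bijection; and verifying carefully that $\pi_h$ is an automorphism, i.e. that $\Phi_h$ and its inverse both send Laver trees to Laver trees and preserve $\subseteq$. The alignment of stems is forced rather than optional, since any common refinement of two Laver trees has a stem extending both, so comparability of stems is necessary for compatibility; this is exactly what the choice $\Phi_h(s_p)=s_q\restriction|s_p|$ arranges.
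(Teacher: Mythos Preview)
Your proof is correct. The one point that deserves a word is that the definition of the $h_t$ and of $r$ in your third paragraph must proceed together, by recursion on the level: $A_t$ is only guaranteed to be infinite once $t$ has already been admitted to $r$ (so that $\Phi_h^{-1}(t)$ lies in $p$ above its stem), not for an arbitrary $t\in q$ above $s_q$. You flag this bookkeeping yourself in the final paragraph, and once the recursion is made explicit the argument goes through cleanly.

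Your route is genuinely different from the paper's. The paper works with the smaller group of \emph{level-wise} automorphisms, applying a single permutation $\beta_i$ to the $i$th coordinate of every node. To make such a rigid map carry one condition onto another, the paper first equalizes the stem lengths and then thins $p,q$ to $p',q'$ so that on each level the last coordinates are pairwise distinct and disjoint between the two trees; after this preparation a suitable $(\beta_i)_{i\in\omega}$ can be chosen so that $\pi(p')=q'$ exactly, and compatibility of $\pi(p)$ with $q$ follows. You instead use the larger group of \emph{node-wise} automorphisms, one permutation per node, which is flexible enough that no preliminary thinning is needed: you align the stems directly, then choose the permutations so that the successor sets overlap infinitely at each node, and finally extract a common Laver refinement $r\leq\pi_h(p),q$. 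The paper's approach yields the aesthetically pleasing $\pi(p')=q'$ on the nose; yours avoids the thinning step and is closer to the standard argument one finds for tree forcings.
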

\begin{proof}
Suppose that $p$ and $q$ are conditions of Laver-forcing. We want to find Laver conditions $p^{\prime}\leq p$, $q^{\prime}\leq q$ and an automorphism
$\pi$ of $\mathbb{P}$ such that $\pi(p^{\prime})=q^{\prime}$. If the stems of $p$ and $q$ have different lengths, we can cut off branches from the condition with the shorter stem until the lengths are equal, which will strengthen this condition.
We may thus assume without loss of generality that the stems of $p$ and $q$ are of equal length.

We now thin out $p,q$ to $p^{\prime},q^{\prime}\in\mathbb{L}$ without changing the length of the stem such that for each $i\in\omega$, the $i$th levels of $p^{\prime}$ and $q^{\prime}$ have no common label and each label appears at most once.
It is easy to see that, when $\beta:\omega\rightarrow\omega$ is bijective, then $\pi^{i}_{\beta}:\mathbb{L}\rightarrow\mathbb{L}$ that applies $\beta$ to each label in the $i$th level of a tree,
is an automorphism of $\mathbb{L}$. But now, by chosing appropriate $\beta_{i}$ for each $i\in\omega$ and applying $\pi:=\bigcup_{j\in\omega}\pi^{j}_{\beta_{j}}$, we have an automorphism of $\mathbb{L}$
that maps $p^{\prime}$ to $q^{\prime}$. Thus, we have $\pi(p)\supset\pi(p^{\prime})=q^{\prime}\subseteq q$, hence $q^{\prime}$ is a common strengthening of $q$ and $\pi(p)$, so that $\pi(p)$ and $q$ are compatible.


\end{proof}

\begin{thm}{\label{laver}}
There is a generic extension $L[G]$ of $L$ such that the generic reals form a comeager sets, yet none of them is parameter-OTM-recognizable.
\end{thm}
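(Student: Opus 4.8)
The plan is to force over $L$ with a single copy of Laver forcing $\mathbb{L}$ and to read ``the generic reals'' as the set of reals \emph{added} by the extension, i.e. the members of $\mathfrak{P}(\omega)^{L[G]}\setminus\mathfrak{P}(\omega)^{L}$. First I would fix an $\mathbb{L}$-generic filter $G$ over $L$ and pass to $L[G]$. The two ingredients already in place are Lemma \ref{laverwh} (Laver forcing is weakly homogeneous) and Lemma \ref{whomgennonrec} (every real added by a weakly homogeneous forcing over a transitive model of $\mathrm{ZFC}$ fails to be recognizable). Since $L\models\mathrm{ZFC}$ and $\mathbb{L}\in L$, Lemma \ref{whomgennonrec} applies verbatim with $M=L$, $\mathbb{P}=\mathbb{L}$ and the given $G$ to \emph{every} $x\in L[G]\setminus L$, not merely to the Laver real itself, because the lemma is stated for an arbitrary new real $x\in M[G]\setminus M$. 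This immediately yields that no real in $\mathfrak{P}(\omega)^{L[G]}\setminus\mathfrak{P}(\omega)^{L}$ is parameter-OTM-recognizable, so the non-recognizability half of the theorem is essentially free.

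It then remains to show that this set is comeager, equivalently that the set of old reals $\mathfrak{P}(\omega)^{L}$ is meager in $L[G]$. Here I would use that Laver forcing adds a dominating real $\ell$: since $\ell\geq^{*}f$ for every $f\in\omega^{\omega}\cap L$, the relativization to $L\subseteq L[G]$ of the (Borel, Tukey-witnessed) inequality $\mathrm{cov}(\mathcal{M})\leq\mathfrak{d}$ produces a Borel meager set coded by $\ell$ that contains all of $\mathfrak{P}(\omega)^{L}$. Concretely, there are ground-model Borel maps $\varphi:2^{\omega}\to\omega^{\omega}$ and $\psi:\omega^{\omega}\to\mathcal{M}$ with the property that $\varphi(x)\leq^{*}d$ implies $x\in\psi(d)$ and $\psi(d)$ is meager; applying these with $d=\ell$, every $x\in\mathfrak{P}(\omega)^{L}$ satisfies $\varphi(x)\in\omega^{\omega}\cap L$, hence $\varphi(x)\leq^{*}\ell$, hence $x\in\psi(\ell)$. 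Thus $\mathfrak{P}(\omega)^{L}\subseteq\psi(\ell)$ is meager and its complement, the generic reals, is comeager, which completes the argument.

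The main obstacle is the comeagerness step rather than the non-recognizability step. One must be careful that the relevant notion is meagerness in Cantor space $2^{\omega}$ (reals as subsets of $\omega$), whereas the Laver real naturally lives in Baire space $\omega^{\omega}$; the clean way around this is exactly the Tukey/Borel witness for $\mathrm{cov}(\mathcal{M})\leq\mathfrak{d}$ above, which converts domination in $\omega^{\omega}$ into a meager cover in $2^{\omega}$, and I would either invoke it as standard or verify in one line that $\psi(\ell)$ is meager and contains every ground-model real. A secondary point I would state explicitly is the reading of ``generic reals'': because a Laver real is dominating, the reals that are genuinely $\mathbb{L}$-generic over $L$ are themselves contained in a meager set and so cannot be comeager, so ``generic reals'' here must denote the full collection of newly added reals of $L[G]$, all of which are simultaneously non-recognizable by the uniform application of Lemma \ref{whomgennonrec}.
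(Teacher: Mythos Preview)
Your proposal is correct and follows essentially the same route as the paper: force with Laver over $L$, invoke Lemma~\ref{laverwh} and Lemma~\ref{whomgennonrec} to kill recognizability of every new real, and argue that the ground-model reals become meager so that the set of new reals is comeager. The only differences are cosmetic. For comeagerness the paper simply cites Theorem~7.3.28 of \cite{BJ}, whereas you unpack the same fact via the Borel Tukey witness for $\mathrm{cov}(\mathcal{M})\leq\mathfrak{d}$ applied to the dominating Laver real; both are standard and your version is a bit more self-contained. The paper's proof additionally records that Laver forcing is minimal, so that all new reals are mutually parameter-OTM-computable and hence mutually recognizable; this extra ingredient is not needed for the theorem as stated but is what drives the subsequent Corollary~\ref{relrecogindep}, so you may want to mention it if you intend to use the theorem for that purpose. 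Your explicit remark that ``generic reals'' must mean all of $\mathfrak{P}(\omega)^{L[G]}\setminus\mathfrak{P}(\omega)^{L}$ rather than the Laver-generic reals over $L$ is exactly the intended reading.
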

\begin{proof}


By \cite{G}, Laver forcing is minimal; hence, when $x,y$ are generic, they are constructible in each other, i.e. $x\in L[y]$ and $y\in L[x]$. As the parameter-OTM-computable reals in the oracle $y$
are exactly the elements of $L[y]$ (see e.g. Lemma 17 of \cite{CS}), this means that all generics are parameter-OTM-computable, and hence in particular recognizable, from each other. 

On the other hand, Laver forcing is weakly homogenous by Lemma \ref{laverwh}. 

Now let $G$ be generic for Laver forcing over $L$ and consider $L[G]$. By Lemma \ref{whomgennonrec}, it follows that no real that is added through the forcing is recognizable. By
Theorem 7.3.28 of \cite{BJ}, Laver forcing makes the set of ground model reals meager, so that the added elements form a comeager set. Any real in $L[G]\setminus L$ is hence recognizable relative to all other such reals,
but not itself recognizable.

Therefore, the claim that parameter-OTM-recognizability from all elements of a comeager set of oracles implies parameter-OTM-recognizability fails in $L[G]$.
\end{proof}

By Theorem \ref{laver} and the remark preceeding it, we get:

\begin{corollary}{\label{relrecogindep}}
It is independent of ZFC whether parameter-OTM-recognizability from all elements of a comeager set of oracles implies parameter-OTM-recognizability.
\end{corollary}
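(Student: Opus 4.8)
The plan is to establish independence by exhibiting one model of ZFC where the implication holds and one where it fails, and the two halves of the paper have essentially set up both. Recall that the statement in question is: \emph{parameter-OTM-recognizability from all elements of a comeager set of oracles implies parameter-OTM-recognizability.} I would prove independence by combining the failure direction with a model in which the implication trivially holds.

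For the \textbf{failure} direction, I would simply invoke Theorem \ref{laver}: in the Laver extension $L[G]$, the generic reals form a comeager set, each generic is parameter-OTM-computable (hence recognizable) from any other generic (by minimality of Laver forcing), yet none of them is itself parameter-OTM-recognizable (by weak homogeneity, via Lemma \ref{whomgennonrec} and Lemma \ref{laverwh}). Thus in $L[G]$ there is a real $x$ that is recognizable relative to every element of a comeager set of oracles but is not recognizable. This witnesses that the implication fails in $L[G]$, so it is not provable in ZFC.

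For the direction showing the implication is \emph{consistent}, I would pass to the constructible universe $L$ itself. As noted in the text preceding the statement, every constructible real is parameter-OTM-computable and hence recognizable by some parameter-OTM; so in $L$ every real is recognizable, and the implication ``recognizability from a comeager set implies recognizability'' is vacuously true. Hence the negation of the implication is also not provable in ZFC, since it fails in $L$.

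The main (and essentially only) obstacle has already been dispatched in Theorem \ref{laver}, namely verifying that Laver forcing simultaneously makes the generics comeager and fails to add recognizable reals; the corollary itself is then just the observation that these two models, $L[G]$ and $L$, realize the two truth values of the statement. Concretely I would write: By Theorem \ref{laver}, there is a model of ZFC (a Laver extension of $L$) in which parameter-OTM-recognizability from a comeager set of oracles does not imply parameter-OTM-recognizability. On the other hand, in $L$ every real is recognizable, so the implication holds trivially there. Since the statement is true in one model of ZFC and false in another, it is independent of ZFC.
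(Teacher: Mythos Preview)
Your proposal is correct and matches the paper's own argument: the paper derives the corollary directly from Theorem~\ref{laver} (for the failure direction) together with the earlier remark that in $L$ every real is parameter-OTM-computable and hence recognizable (for the consistency direction). No additional ideas are needed beyond what you have written.
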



Recognizability for $\alpha$-register machines was considered briefly in \cite{Ca}, where it turned out that the existence of lost melodies depends on $\alpha$. We do not know for which $\alpha$ the analogue of our statement holds.

\section{The Recognizable Jump Operator}


A notion of computability is commonly accompanied by a corresponding jump operator; a jump operator can roughly be seen as the set of programs that compute something in the sense of the notion of computability in question.
This motivates the introduction of a jump operator for recognizability. This jump operator will turn out to be strongly connected to $\Sigma_{1}$-stability and is conceptually stable in the sense that the recognizable jumps
for ITRMs and ITTMs, which are otherwise very different in strength, are primitive recursively equivalent.

In this section, we will, besides ITRMs and ITTMs, also consider Ordinal Turing Machines (OTMs), introduced in \cite{Ko1}. Unless stated otherwise, we will consider OTMs without ordinal parameters.

It is easy to see that the two variants of the jump operator given by (1) $x^{\prime}:=\{i\in\omega:\forall{j\in\omega}P^{x}_{i}(j)\downarrow\}$ and (2) $x^{\prime}:=\{i\in\omega:P^{x}(0)\downarrow\}$ are equivalent for
the models of computability discussed here (i.e. ITRMs, ITTMs, OTMs). Namely, to reduce (1) to (2), consider, given the index $i$, the program $Q$ that, for any input in the first register,
 lets $P_{i}^{x}(j)$ run successively for all $j\in\omega$. Then $Q^{x}(0)$ halts if and only if $P_{i}^{x}(j)$ halts for every $j\in\omega$, and an index for $Q$ is easily Turing-computable from $i$.
 To reduce (2) to (1), given index $i$, consider the program $Q$ that, for any input in the first register, runs $P_{i}^{x}(0)$. Then $P_{i}^{x}(0)$ halts if and only if $Q^{x}(j)$ halts for every $j\in\omega$.
We can thus say that the jump operator for a model of infinitary computability sends a real $x$ to the set of all indices $i\in\omega$ such that $P_{i}^{x}$ computes a real number.

Analogously, we now define the `recognizable jump operator', or $r$-jump, $x^{r}$ of a real to be the set of all indices $i\in\omega$ such that $P_{i}^{x}$ recognizes a real:

\begin{defini}{\label{recjump}}
Let $M$ be ITRM, ITTM or OTM, $x\subseteq\omega$. Fix a natural enumeration $(P_{i,M}:i\in\omega)$ of the $M$-programs. 
Then $x^{r}_{M}$, the $r$-jump of $x$ (for $M$), is defined as $\{i\in\omega:\exists{y\subseteq\omega}\forall{z\subseteq\omega}P_{i,M}^{z\oplus x}\downarrow=\delta(z,y)\}$.
We can iterate this operator by setting $r-x^{0}_{M}:=0$ and $r-x^{i+1}_{M}:=(r-x^{i}_{M})^{r}_{M}$. Transfinite iterations are also possible as for the Turing jump, but will not be considered here.
When $M$ is clear from the context, we drop it.
\end{defini}

Many of the following theorems hold for ITRMs, ITTMs and OTMs. To avoid repitions, we use an index $M$ to denote, unless stated otherwise, ITRMs, ITTMs and OTMs for the rest of the section.
Thus, $\equiv_{M}$ means computational equivalence in the sense of the model $M$.

We start by noting that the recognizable jump enjoys the appropriate amount of stability to be expected of a jump operator:

\begin{prop}
Let $x,y\subseteq\omega$ such that $x\equiv_{M}y$. Then $x^{r}_{M}\equiv_{M}y^{r}_{M}$.
\end{prop}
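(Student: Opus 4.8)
The plan is to prove the two reductions $x^r_M \leq_M y^r_M$ and $y^r_M \leq_M x^r_M$ separately; by symmetry it suffices to carry out the first. Since $x \equiv_M y$, fix $M$-programs $A$ and $B$ with $A^x$ computing $y$ and $B^y$ computing $x$. I will exhibit a single recursive function $f:\omega\to\omega$ with the property that $i \in x^r_M$ if and only if $f(i)\in y^r_M$; since recursive functions are computable by every model $M$ under consideration, this yields a many-one reduction of $x^r_M$ to $y^r_M$ and hence $x^r_M \leq_M y^r_M$.

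To define $f$, I would use the program $B$ computing $x$ from $y$ to perform a uniform relativized simulation. Given an index $i$, let $P_{f(i),M}$ be the program that, on any oracle of the form $z\oplus y$, simulates $P_{i,M}$ on the virtual oracle $z\oplus x$: every query $P_{i,M}$ makes to a bit of its $z$-component is answered directly from the actual oracle, while every query to a bit of its $x$-component is answered by running the subroutine $B$ on the $y$-component (which is legitimate because $B^y$ is total, computing $x$). Since $B^y$ decides each bit of $x$ correctly and halts, this simulation reproduces the computation of $P_{i,M}^{z\oplus x}$ step for step, so that $P_{f(i),M}^{z\oplus y}\downarrow = P_{i,M}^{z\oplus x}\downarrow$ for every $z\subseteq\omega$ (including the non-halting case). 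Consequently $P_{i,M}$ recognizes a real relative to $x$ exactly when $P_{f(i),M}$ recognizes the same real relative to $y$, which is precisely the equivalence $i\in x^r_M \Leftrightarrow f(i)\in y^r_M$. The index $f(i)$ is obtained by a primitive recursive transformation of the source codes of $P_{i,M}$ and $B$, so $f$ is recursive.

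The reverse reduction is obtained identically, using $A$ (computing $y$ from $x$) in place of $B$ to produce a recursive $g$ with $i\in y^r_M \Leftrightarrow g(i)\in x^r_M$, yielding $y^r_M\leq_M x^r_M$. Combining the two gives $x^r_M\equiv_M y^r_M$. I expect the only real point requiring care to be the uniform simulation step, and in particular verifying it within models that lack a universal machine: for ITRMs one must check that substituting calls to $B^y$ for the $x$-oracle queries of $P_{i,M}$ can be carried out with finitely many additional registers and without altering the exact halting behaviour, so that the recognizability condition $\forall z\,P^{z\oplus x}_{i}\downarrow\in\{0,1\}$ transfers faithfully; for ITTMs and OTMs this is routine via their universal machines. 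The key observations making this harmless are that $B$ is total and that no bound on the number of registers (respectively tapes or tape length) is imposed in Definition \ref{recjump}.
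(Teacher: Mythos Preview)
Your proof is correct and follows essentially the same approach as the paper: both construct, primitive-recursively in $i$, an index $f(i)$ for a program that simulates $P_{i,M}$ with its $x$-oracle queries answered via the fixed program computing $x$ from $y$, so that $i\in x^{r}_{M}\Leftrightarrow f(i)\in y^{r}_{M}$. Your version is in fact somewhat more explicit about the ITRM case and the totality of $B^{y}$ than the paper's sketch.
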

\begin{proof}
 We show that $x^{r}_{M}\leq_{M}y^{r}_{M}$, the other direction following by symmetry. So assume that $y^{r}_{M}$ is given in the oracle and we want to determine whether $(P_{i}^{M})^{x}$ recognizes a real number.
Let $Q$ be an $M$-program that computes $x^{r}_{M}$ from $y^{r}_{M}$. From $P_{i}^{M}$ and $Q$, it is easy to obtain (primitive recursively, in fact) an $M$-program $Q^{\prime}_{i}$ that, given the oracle $z_{0}\oplus z_{1}$, works by first applying
$Q$ to $z_{0}$ and then, after $Q^{z_{0}}$ halts (if it does) with output $z$, running $(P_{i}^{M})^{z\oplus z_{1}}$. Now, $Q_{i}^{\prime}$ recognizes a real relative to $y$ if and only if $P_{i}^{M}$ recognizes a real relative to $x$;
but whether $Q_{i}^{\prime}$ recognizes a real number can simply be determined by using $y^{r}_{M}$.
\end{proof}

\begin{prop}{\label{recjumpabs}}
$0^{r}_{M}$ is absolute between $V$ and $L$ for $M\in\{\text{ITRM},\text{ITTM}\}$.
\end{prop}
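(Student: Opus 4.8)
The goal is to show that $0^r_M$, the recognizable jump of $0$, is computed identically whether evaluated in $V$ or in $L$, for $M$ being an ITRM or ITTM. The plan is to unpack the definition: $i \in 0^r_M$ if and only if there exists a real $y$ such that $P_{i,M}$ recognizes $y$, i.e. for all reals $z$ we have $P_{i,M}^z \downarrow = \delta(z,y)$. So membership in $0^r_M$ amounts to an existential real quantifier asserting the existence of a recognizable witness, subject to a universally-quantified correctness condition on the recognizing program. The strategy is to bound the relevant reals inside a level of the constructible hierarchy, so that both the existence of a witness and the verification of recognition can be carried out inside $L$, invoking an absoluteness argument.

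First I would reduce the statement to showing, for each fixed index $i$, that $i \in 0^r_M$ computed in $V$ agrees with $i \in 0^r_M$ computed in $L$. By Lemma \ref{retractrecog} (for ITTMs) and the analogous ITRM-characterization coming from Theorem \ref{relITRM} and Theorem \ref{hp}, any real $y$ that is recognizable by $P_{i,M}$ lies in a canonical countable level of $L$: for ITTMs, $y \in L_{\lambda^y}$ and $y$ is the unique witness to a $\Sigma_1$-formula there, while for ITRMs the recognizable real lives in $L_{\omega_\omega^{CK,y}}$ by the halting-time stratification. In either case such a $y$ is an element of $L$, so if a witness exists in $V$, it already exists in $L$. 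This handles the forward direction $0^r_M{}^{V} \subseteq 0^r_M{}^{L}$ modulo verifying that the recognition condition is absolute.

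The heart of the argument is the two-directional absoluteness of the recognition statement. The assertion ``$P_{i,M}$ recognizes some real'' is of the shape $\exists y\, \forall z\, (P_{i,M}^z \downarrow = \delta(z,y))$. The inner computation relation $P_{i,M}^z \downarrow = j$ is absolute between $V$ and $L$ by absoluteness of (the relevant) computations, since a halting computation is coded by a real and is a $\Sigma_1$ (indeed $\Delta_1$) fact over any admissible level containing the relevant oracle. The subtlety is the universal quantifier $\forall z$: passing to $L$ could in principle change the collection of reals $z$ against which correctness must hold. The key point, which I would make explicit, is that by the assumption that $P_{i,M}$ halts on every oracle (the extra convergence condition in the definition of recognizability, secured via Theorem \ref{hp} for ITRMs and by definition for ITTMs), the whole statement $\exists y\,\forall z\,(\dots)$ can be recast as a $\Sigma^1_2$-type predicate which, together with the fact that the witness $y$ and any potential counterexample $z$ both live in levels of $L$, becomes absolute by Shoenfield-style absoluteness or, more directly, by the Jensen--Karp $\Sigma_1$-absoluteness results (Lemma \ref{relJK}) applied at the appropriate admissible level.

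The main obstacle I expect is the universal quantifier over oracles $z$ in the correctness clause, because absoluteness of $\Sigma_1$-statements does not by itself control a $\Pi_1$ or $\forall$-quantified condition. The clean way around this is to observe that a genuine recognizer cannot have a counterexample real $z \neq y$ with $P_{i,M}^z \downarrow = 1$ anywhere in $V$; were there such a $z$, it would be witnessed by a halting computation coded by a real, and that computation is itself an absolute object, so its existence reflects down to $L$ and up from $L$ symmetrically. Thus both the success clause ($P_{i,M}^y\downarrow = 1$) and the failure of any counterexample are carried by halting computations, each coded by a real, and each absolute between $V$ and $L$. I would therefore phrase the final argument as: $i \in 0^r_M$ if and only if $L$ believes there is a $y$ with $P_{i,M}^y \downarrow = 1$ and no $z \neq y$ with $P_{i,M}^z \downarrow = 1$, and this equivalence holds because every relevant halting computation is an absolute real object — completing the proof that $0^r_M$ is computed identically in $V$ and in $L$ for both ITRMs and ITTMs.
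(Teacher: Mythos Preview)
Your overall strategy is sound, and your handling of cases (3) and (4) in the paper's terminology --- the existence of a witness $y$ with $P^{y}\downarrow=1$ and the nonexistence of a second such witness --- is essentially correct and matches the paper. But there is a genuine gap concerning the total-convergence condition. Your final characterization, ``$i \in 0^{r}_{M}$ if and only if $L$ believes there is a $y$ with $P_{i,M}^{y}\downarrow=1$ and no $z \neq y$ with $P_{i,M}^{z}\downarrow=1$'', is not equivalent to $i\in 0^{r}_{M}$: a program that outputs $1$ on exactly one oracle $y$ and diverges (or outputs $7$) on every other $z$ satisfies your condition but is not a recognizer. The definition requires $P^{z}\downarrow\in\{0,1\}$ for \emph{every} $z$, and you never establish that this $\Pi_{1}$-over-reals condition is absolute. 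Your remark that convergence is ``secured via Theorem~\ref{hp} for ITRMs and by definition for ITTMs'' is a misreading: Theorem~\ref{hp} bounds halting times when the program halts, and the remark following the ITRM definition lets one replace a recognizer by a totally-defined one, but neither tells you that a \emph{given} index $i$ has the total-convergence property absolutely.

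This is exactly the nontrivial point in the paper's proof. The paper decomposes ``$P_{i}$ does \emph{not} recognize a real'' into four disjuncts and shows each is $\Sigma_{1}$ (set-theoretically), hence absolute. Disjuncts (2)--(4) are routine; the crux is (1), ``$\exists z\,(P^{z}\uparrow)$'', which is \emph{not} obviously $\Sigma_{1}$ since divergence is prima facie $\Pi_{1}$. The paper handles this by invoking the halting-time characterizations: for ITRMs, $P^{z}\uparrow$ iff $L_{\omega_{\omega}^{\text{CK},z}}[z]\models P^{z}\uparrow$; for ITTMs, $P^{z}\uparrow$ iff $L_{\lambda^{z}}[z]\models P^{z}\uparrow$, and $\lambda^{z}$ is $\Sigma_{1}$-definable via Theorem~\ref{ITTMchar}. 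This converts divergence into an existential statement about the appropriate $L$-level, making (1) genuinely $\Sigma_{1}$. Your proposal gestures toward the halting-time bounds but never deploys them for this purpose; once you add this step, your $\Sigma^{1}_{2}$/Shoenfield sketch can be made to work, since halting then becomes $\Delta^{1}_{1}$ in the oracle and the full recognizability predicate collapses to $\Sigma^{1}_{2}$.
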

\begin{proof}
 That a program $P$ does not recognize a real number means that one of the following holds: (1) There is a real number $x$ such that $P^{x}\uparrow$ (2) There is a real number $x$ such that $P^{x}\downarrow\notin\{0,1\}$
(3) There is no real number $x$ such that $P^{x}\downarrow=1$ (4) There are different real numbers $x,y$ such that $P^{x}\downarrow=1$ and $P^{y}\downarrow=1$.
So (2) and (4) are set-theoretical $\Sigma_1$-statements and hence absolute between $V$ and $L$. Concerning (3), if $L$ contains a real number $x$ such that $P^{x}\downarrow=1$, then, by absoluteness
of computations, so does $V$. On the other hand, if $V\models\exists{x}P^{x}\downarrow=1$, then, by Shoenfield absoluteness, $L\models\exists{x}P^{x}\downarrow=1$ and by absoluteness of computations, $L$ contains
some $y$ such that $P^{y}\downarrow=1$. Finally, (1) is $\Sigma_{1}$-expressible for ITRMs as `There is $y\subseteq\omega$ such that $L_{\omega_{\omega}^{\text{CK},y}}[y]\models P^{y}\uparrow$' and
for ITTMs as `There is $y\subseteq\omega$ such that $L_{\lambda^{y}}[y]\models P^{y}\uparrow$' together with Theorem \ref{ITTMchar}.
\end{proof}

We observe that the computable jump of $x$ reduces to the recognizable jump of $x$, so that the latter is not computable from $x$:

\begin{prop}{\label{cjumpfromrjump}}
$x^{\prime}_{M}\leq_{T}x^{r}_{M}$ (where $\leq_{T}$ denotes Turing reducibility).
\end{prop}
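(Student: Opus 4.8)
The plan is to exhibit a many-one reduction of $x'_M$ to $x^r_M$ computed by an ordinary (finitary) recursive function, which a fortiori yields $x'_M\leq_T x^r_M$. By the equivalence of the two variants of the computable jump noted above, I would work with $x'_M=\{i\in\omega : P_{i,M}^{x}(0)\downarrow\}$. The key point is to attach to each index $i$, uniformly and effectively, an index $f(i)$ of a program $Q_i$ such that $Q_i$ recognizes a real relative to $x$ if and only if $P_{i,M}^{x}(0)$ halts. Then $i\in x'_M \iff f(i)\in x^r_M$, and recursiveness of $f$ gives the claim.

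I would define $Q_i$ on an oracle of the form $z\oplus x$ as follows: first simulate $P_{i,M}^{x}(0)$, reading off the $x$-part of the oracle and ignoring $z$. If and when this simulation halts, $Q_i$ proceeds to test whether the $z$-part is the empty real $\emptyset$: it scans the bits of $z$ along $\omega$, outputting $0$ and halting as soon as it meets a bit equal to $1$, and, having found none by the limit stage, outputting $1$ and halting. If $P_{i,M}^{x}(0)$ never halts, then $Q_i$ never halts on any oracle. The emptiness test is a genuinely halting subroutine for each of ITRM, ITTM and OTM (it decides a $\Pi^0_1$ condition on the oracle), so $Q_i$ is a legitimate $M$-program, and an index $f(i)$ for it is obtained primitive-recursively from $i$.

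The verification is then immediate from the observation that the behaviour of $Q_i$ on $z\oplus x$ factors through $P_{i,M}^{x}(0)$, whose outcome is independent of $z$. If $P_{i,M}^{x}(0)\downarrow$, then for every $z$ we have $Q_i^{z\oplus x}\downarrow=\delta(z,\emptyset)$, so $Q_i$ recognizes $\emptyset$ relative to $x$ and hence $f(i)\in x^r_M$. Conversely, if $P_{i,M}^{x}(0)\uparrow$, then $Q_i^{z\oplus x}\uparrow$ for every $z$; since a recognizing program must halt on \emph{all} oracles, no $y$ can satisfy $\forall z\, Q_i^{z\oplus x}\downarrow=\delta(z,y)$, so $f(i)\notin x^r_M$. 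Thus $i\in x'_M\iff f(i)\in x^r_M$, giving $x'_M\leq_m x^r_M$ and hence $x'_M\leq_T x^r_M$.

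The construction is routine; the only points requiring care are (i) that $Q_i$ halts on every oracle exactly when $P_{i,M}^{x}(0)$ halts, which is what makes the recognition condition $\exists y\,\forall z\, Q_i^{z\oplus x}\downarrow=\delta(z,y)$ track the halting of $P_{i,M}^{x}(0)$, and (ii) that the emptiness test can be carried out as a halting computation in each of the three models. Both are straightforward for transfinite machines, so I expect no serious obstacle; the content of the statement is simply that the strong ``halts on all oracles'' demand in the definition of recognizability lets the computable jump be coded into the recognizable jump.
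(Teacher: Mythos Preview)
Your proposal is correct and follows essentially the same approach as the paper: construct, primitive-recursively in $i$, a program $Q_i$ that first simulates $P_{i,M}^{x}(0)$ and then (if that halts) recognizes the fixed real $0$, so that $Q_i$ recognizes a real relative to $x$ iff $P_{i,M}^{x}(0)\downarrow$. Your write-up is in fact more careful than the paper's, which uses the letter $x$ ambiguously for both the fixed parameter and the oracle being tested; your explicit distinction between the $z$- and $x$-components and your verification that the emptiness test halts in each model are exactly the details one would want spelled out.
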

\begin{proof}
Let $i\in\omega$. To test, using $x^{r}_{M}$, whether $P_{i}^{x}(0)$ halts, we compute from $i$ a code for the program $Q$ that does the following:
First, $Q$ runs $P^{x}(0)$. Once $P^{x}(0)$ has stopped (if ever), $Q$ checks whether $x=0$ and returns $1$ if $x=0$ and otherwise $0$.
Clearly, $Q$ recognizes a real (namely $0$) if and only if $P^{x}(0)$ halts. And an index for $Q$ is easily Turing-computable from $i$.
\end{proof}

A crucial property of the computable jump is that $x^{\prime}_{M}$ is not $M$-computable from $x$. The next goal is to show that the same holds for the $r$-jump.

\begin{defini}{\label{stable}}
(See \cite{Ba}) An ordinal $\alpha$ is $1$-stable if and only if $L_{\alpha}\prec_{\Sigma_{1}}L$. $\alpha$ is $\Sigma_1$-fixed if and only if there is some $\Sigma_1$-statement $\phi$
such that $\alpha$ is minimal with the property that $L_{\alpha}\models\phi$. For $\iota\in On$, $\sigma_{\iota}$ denotes the $\iota$th $1$-stable ordinal.
The first $1$-stable ordinal, $\sigma_{0}$, is also denoted $\sigma$. This notation relativizes to real parameters in the obvious way.
\end{defini}

\begin{lemma}{\label{barwise}}\ \\
(1) $\sigma$ is the supremum of the $\Sigma_{1}$-fixed ordinals.\\
(2) If $\alpha$ is $1$-stable, then $\alpha$ is recursively inaccessible, i.e. an admissible limit of admissible ordinals.\\
(3) $L_{\sigma}$ is the set of all $x$ that are parameter-free $\Sigma_{1}$-definable in $L$.
\end{lemma}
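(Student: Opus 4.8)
The plan is to derive all three items from the standard $\Sigma_1$-reflection theory of $L$, using that $L$ carries a parameter-free $\Sigma_1$-definable Skolem function and a $\Sigma_1$-good global well-ordering $<_L$, that satisfaction for set-sized structures is $\Delta_1$, and that $\Sigma_1$-formulas are upward absolute among the levels $L_\beta$. The one bespoke observation I would isolate first is that the predicate ``$\beta$ is the least ordinal with $L_\beta\models\phi$'' is $\Sigma_1$ in $\beta$: it is equivalent to ``there is a set $w=L_{\beta+1}$ satisfying, internally, both $L_\beta\models\phi$ and $\forall\gamma<\beta\,(L_\gamma\not\models\phi)$'', and once the set $L_{\beta+1}$ is produced as a witness the bracketed condition is $\Delta_0$. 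Together with the $\Sigma_1$-ness of $\gamma\mapsto L_\gamma$, this makes every $\Sigma_1$-fixed ordinal $\Sigma_1$-definable.

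For (2) I start from $L_\alpha\prec_{\Sigma_1}L$. The ordinal $\alpha$ is a limit, since if $\alpha=\beta+1$ then $\beta$ would be the largest ordinal of $L_\alpha$ while the $\Sigma_1$ fact $\exists\gamma\,(\gamma>\beta)$, true in $L$ with the parameter $\beta\in L_\alpha$, would reflect into $L_\alpha$. For admissibility I check $\Delta_0$-collection: if $a\in L_\alpha$ and $L_\alpha\models\forall u\in a\,\exists v\,\theta(u,v)$ with $\theta$ a $\Delta_0$ formula, then for each $u\in a$ the $\Sigma_1$ statement $\exists v\,\theta(u,v)$ reflects upward to $L$; admissibility of $L$ gives a collecting set, and ``there is $b$ with $\forall u\in a\,\exists v\in b\,\theta(u,v)$'' is $\Sigma_1$ in $a$ and hence reflects back into $L_\alpha$ (the remaining axioms of KP hold in every limit level). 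Finally, to see $\alpha$ is a limit of admissibles I reflect, for each $\xi<\alpha$, the statement ``$\exists\beta>\xi\,(L_\beta\models \text{KP})$'', which is $\Sigma_1$ by the preliminary observation and true in $L$, obtaining an admissible $\beta$ with $\xi<\beta<\alpha$.

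For (1), let $\tau$ be the supremum of the $\Sigma_1$-fixed ordinals. Since each such ordinal is $\Sigma_1$-definable, part (3) places it in $L_\sigma$, whence $\tau\le\sigma$. For the reverse inequality I show the $\Sigma_1$-fixed ordinals are cofinal in $\sigma$: given $\eta<\sigma$, part (3) provides a $\Sigma_1$ formula $\psi$ of which $\eta$ is the unique solution, and then the least level $L_\beta$ with $L_\beta\models\exists v\,\psi(v)$ is $\Sigma_1$-fixed; upward absoluteness of $\psi$ forces the witness to be $\eta$ itself, so $\eta\in L_\beta$ and $\beta>\eta$, while $\beta<\sigma$ by the first inequality. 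Thus the $\Sigma_1$-fixed ordinals are unbounded below $\sigma$ and $\tau=\sigma$.

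For (3), one inclusion is immediate from $L_\sigma\prec_{\Sigma_1}L$: if $x$ is the unique solution of a $\Sigma_1$ formula $\psi$, then $\exists v\,\psi(v)$ passes from $L$ to $L_\sigma$, and its $L_\sigma$-witness equals $x$ by upward absoluteness and uniqueness, so $x\in L_\sigma$. For the converse I pass to the $\Sigma_1$-Skolem hull $X$ of $\emptyset$ in $L$; there being only countably many $\Sigma_1$ formulas, $X$ is countable, closure under the canonical $\Sigma_1$-Skolem function (and closure of $\Sigma_1$-definability under composition) yields $X\prec_{\Sigma_1}L$, and G\"{o}del condensation collapses $X$ to some $L_\gamma$. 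The crux is to show that this collapse is the identity -- equivalently that $X$ is transitive, i.e.\ that every member of $L_\sigma$, and in particular every ordinal below $\sigma$, is already $\Sigma_1$-definable -- whereupon $X=L_\sigma$ and $\gamma=\sigma$. Here I would exploit that $<_L$ ranks each element of a set before the set itself, so that an induction along $<_L$ shows the members of any $a\in X$ again lie in $X$, forcing $X\cap On$ to be an initial segment and the collapse to be trivial. I expect precisely this transitivity step -- together with the cofinality claim it underwrites in (1) -- to be the main obstacle, as it is the one point where statements carrying ordinal parameters below $\sigma$ must be reflected back into the hull; the $\Sigma_1$-good well-ordering of $L$ and condensation are the ingredients that carry it through.
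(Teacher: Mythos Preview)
The paper does not prove this lemma at all; its ``proof'' is a bare citation to Corollaries V.7.6 and V.7.9 of Barwise's \emph{Admissible Sets and Structures}. Your proposal therefore goes well beyond what the paper offers, and your treatment of (2) and of the easy inclusions in (1) and (3) is sound.

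The one place your argument does not close is precisely the step you flag: the transitivity of the $\Sigma_1$-Skolem hull $X$ of $\emptyset$ in $L$. Your proposed resolution---an induction along $<_L$ showing that $a\in X$ implies $a\subseteq X$---does not work as stated. At the inductive step one has $a\in X$ and $c\in a$, hence $c<_L a$; but the inductive hypothesis only tells you that $<_L$-predecessors of $a$ \emph{which already lie in $X$} have their members in $X$, and nothing in this places $c$ itself in $X$. Equivalently, attempting to show that $X\cap\mathrm{On}$ is an initial segment by ordinal induction founders at limit stages: knowing that each $\beta<\alpha$ is $\Sigma_1$-definable does not by itself make $\alpha$ $\Sigma_1$-definable.

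The argument that does go through---and in which the condensation you invoke really does the work---runs via the $\Sigma_1$-fixed ordinals. If $\alpha$ is $\Sigma_1$-fixed, witnessed by the sentence $\psi$, then the $\Sigma_1$-Skolem hull of $\emptyset$ \emph{inside $L_\alpha$} collapses to some $L_{\alpha'}$ which still satisfies $\psi$; minimality of $\alpha$ forces $\alpha'=\alpha$, so that hull was already all of $L_\alpha$, and every element of $L_\alpha$ is parameter-free $\Sigma_1$-definable over $L_\alpha$. Prefixing any such definition with ``in the least $L_\alpha$ satisfying $\psi$'' yields a parameter-free $\Sigma_1$ definition over $L$. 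Hence $L_\tau\subseteq X$ for $\tau$ the supremum of the $\Sigma_1$-fixed ordinals. Now every parameter from $L_\tau$ is itself $\Sigma_1$-definable in $L$, so any $\Sigma_1$ formula with such parameters is equivalent in $L$ to a $\Sigma_1$ sentence, which (if true) first holds at some $\Sigma_1$-fixed level below $\tau$; this gives $L_\tau\prec_{\Sigma_1}L$ and hence $\tau\ge\sigma$. Combined with $X\subseteq L_\sigma$ one obtains $X=L_\sigma=L_\tau$, establishing (1) and (3) simultaneously.
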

\begin{proof}
 See Corollary $V.7.9$ and Corollary $V.7.6$ of \cite{Ba}. 
\end{proof}

\begin{lemma}{\label{recogsup}}
 $\sigma=\text{sup}\{\alpha\in\text{On}:\exists{x\in\text{RECOG}_{M}}(x\notin L_{\alpha}\wedge x\in L_{\alpha+1})\}$. 
In particular, we have $\text{RECOG}_{M}\subseteq L_{\sigma}$.
\end{lemma}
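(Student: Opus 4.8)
The plan is to prove the two inequalities $\sup S\le\sigma$ and $\sup S\ge\sigma$, where $S:=\{\alpha\in\mathrm{On}:\exists x\in\text{RECOG}_{M}\,(x\notin L_{\alpha}\wedge x\in L_{\alpha+1})\}$; the supplementary claim $\text{RECOG}_{M}\subseteq L_{\sigma}$ drops out of the first inequality. For the upper bound it suffices to show $\text{RECOG}_{M}\subseteq L_{\sigma}$, since $\sigma$ is a limit ordinal (indeed recursively inaccessible by Lemma \ref{barwise}(2)): if $x\in\text{RECOG}_{M}$ first appears at level $\alpha+1$ and $x\in L_{\sigma}$, then $\alpha+1<\sigma$, so $\alpha<\sigma$ and $S\subseteq\sigma$. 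For the lower bound I will exhibit, for cofinally many $\beta<\sigma$, an $M$-recognizable real whose first-appearance level in $L$ is at least $\beta$.

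To see $\text{RECOG}_{M}\subseteq L_{\sigma}$, fix $x$ recognized by the $M$-program $P$, so that $x$ is the unique real with $P^{x}\downarrow=1$. As in the analysis of Proposition \ref{recjumpabs}, the predicate $P^{v}\downarrow=1$ is $\Sigma_{1}$ in $v$ and absolute between $V$ and $L$: a halting run of $P^{v}$ is a set coded in $L_{\omega_{\omega}^{\text{CK},v}}[v]$ for ITRMs (Theorem \ref{hp}), in $L_{\lambda^{v}}[v]$ for ITTMs (Theorem \ref{ITTMchar}), and similarly for OTMs. Hence $\exists v\,(P^{v}\downarrow=1)$ is $\Sigma_{1}$, and being $\Sigma^{1}_{2}$ about reals it reflects to $L$ by Shoenfield absoluteness; the $L$-witness, by recognition and upward absoluteness of $\Sigma_1$, equals $x$, so $x\in L$. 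The same absoluteness shows that $\psi(v):\equiv P^{v}\downarrow=1$ is a $\Sigma_{1}$ formula that $x$ satisfies in $L$ and that has no other solution in $L$ (any $L$-solution is a $V$-solution, hence equals $x$). Thus $x$ is parameter-free $\Sigma_{1}$-definable in $L$, so $x\in L_{\sigma}$ by Lemma \ref{barwise}(3).

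For the lower bound, recall that the $\Sigma_{1}$-fixed ordinals are cofinal in $\sigma$ by Lemma \ref{barwise}(1). Given a $\Sigma_{1}$-fixed ordinal $\beta$, fix a $\Sigma_{1}$-sentence $\phi$ for which $\beta$ is least with $L_{\beta}\models\phi$, and let $c_{\beta}$ be the $<_{L}$-least real coding $L_{\beta}$; since $c_{\beta}\notin L_{\beta}$, its first-appearance level lies in $[\beta,\sigma)$. I claim $c_{\beta}$ is $M$-recognizable. Given an oracle $z$, the recognizing program uses the oracle-$z$ computation of $L_{\omega_{\omega}^{\text{CK},z}}[z]$ (for ITRMs, by Theorems \ref{hp} and \ref{relITRM}), resp.\ $L_{\lambda^{z}}[z]$ (ITTMs) or $L[z]$ (OTMs), to verify in turn that (i) $z$ codes a well-founded level $L_{\gamma}$ of the required form, (ii) $L_{\gamma}\models\phi$ while no shorter level satisfies $\phi$, which pins $\gamma=\beta$, and (iii) $z$ is the $<_{L}$-least code of $L_{\beta}$, by checking that no $<_{L}$-earlier real codes $L_{\beta}$. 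When $z$ does code such a level, the ordinal $\omega_{\omega}^{\text{CK},z}$ (resp.\ $\lambda^{z}$) is large enough that all these checks, which concern only levels slightly above $\beta$, can be carried out; on every other oracle the program rejects and halts. Thus $c_{\beta}$ is recognized and contributes to $S$ a level $\ge\beta$. Letting $\beta\to\sigma$ yields $\sup S\ge\sigma$, completing the proof.

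The main obstacle is the lower bound, specifically step (iii): making the choice of a \emph{unique} recognizable real rigorous and uniform across the three models. For OTMs and ITTMs the required search through $L$ (resp.\ $L_{\lambda^{z}}[z]$) is routine, but for ITRMs one must check that the limited computational power suffices — that an oracle $z$ coding $L_{\beta}$ automatically forces $\omega_{\omega}^{\text{CK},z}$ high enough to verify both the minimality of $\gamma$ and the $<_{L}$-leastness of $z$, and that the recognizing program halts on \emph{every} oracle via the bounded-halting machinery of Theorem \ref{hp}. One must also confirm that the first-appearance levels of the $c_{\beta}$ are genuinely cofinal in $\sigma$ rather than clustering below some smaller ordinal, which is guaranteed here by Lemma \ref{barwise}(1) together with $c_{\beta}\notin L_{\beta}$.
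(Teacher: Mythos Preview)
Your proposal is correct and follows essentially the same route as the paper: Shoenfield absoluteness plus the $\Sigma_{1}$-definability characterization of $L_{\sigma}$ for the inclusion $\text{RECOG}_{M}\subseteq L_{\sigma}$, and recognizing the $<_{L}$-minimal code of $L_{\beta}$ for $\Sigma_{1}$-fixed $\beta$ to get cofinality. Your exposition is considerably more detailed than the paper's sketch (which defers to \cite{Ca1}), and the concern you flag in your final paragraph about step~(iii) for ITRMs is exactly the point the paper leaves implicit, to be handled by the lost-melody techniques of \cite{ITRM}.
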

\begin{proof}
 This is done in Theorem $27$ of \cite{Ca1} for ITRMs, but the same argument works for ITTMs and parameter-free OTMs: If $x\subseteq\omega$ is $M$-recognizable, then, by Shoenfield absoluteness, it is constructible. 
Now, if $P$ recognizes $x$, then $\exists{y}P^{y}\downarrow=1$ is a $\Sigma_1$-definition must become true for the first time in some $L_{\alpha}$ with $\alpha<\sigma$, so that $x\in L_{\sigma}$. On the other hand, 
if $\alpha$ is minimal such that $L_{\alpha}\models\phi$ for some $\Sigma_{1}$-statement $\phi$, then $L_{\alpha+1}$ will contain a $<_{L}$-minimal real coding $L_{\alpha}$ which can be recognized as the $<_{L}$-minimal code
of an $L$-level in which $\phi$ holds.
\end{proof}

As one would expect, the recognizable jump of a real number $x$ transcends recognizability relative to $x$:

\begin{thm}{\label{rjumpnonrec}}
Let $x\subseteq\omega$. Then $x^{r}_{M}$ is not $M$-recognizable relative to $x$.
\end{thm}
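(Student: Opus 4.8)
The plan is to argue by contradiction. Assume that $x^{r}_{M}$ is $M$-recognizable relative to $x$ and derive that $x^{r}_{M}$ would have to lie both inside and outside $L_{\sigma^{x}}[x]$, where $\sigma^{x}$ denotes the least $x$-$1$-stable ordinal. On the one hand, the relativization of Lemma \ref{recogsup} confines every real recognizable relative to $x$ to $L_{\sigma^{x}}[x]$: if some program $R$ recognized $x^{r}_{M}$ relative to $x$, then $\exists y\, R^{y\oplus x}\downarrow=1$ would be a $\Sigma_{1}$-statement in the parameter $x$ that is true in $V$, hence, by the relativized stability $L_{\sigma^{x}}[x]\prec_{\Sigma_{1}}L[x]$, first witnessed at some level $L_{\gamma}[x]$ with $\gamma<\sigma^{x}$; its unique witness $x^{r}_{M}$ would therefore lie in $L_{\sigma^{x}}[x]$. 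The contradiction comes from showing that $x^{r}_{M}$ is in fact too complex for this, since it computes the relativized $\Sigma_{1}$-theory of $L[x]$, which cannot be an element of $L_{\sigma^{x}}[x]$.

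First I would reduce the $x$-relativized $\Sigma_{1}$-theory $T^{x}:=\{\psi:\psi\text{ is }\Sigma_{1}\text{ with parameter }x\text{ and }L[x]\models\psi\}$ to $x^{r}_{M}$ by a primitive recursive function. Given an index for a $\Sigma_{1}$-sentence $\psi$, I construct uniformly an $M$-program $P_{\psi}$ that, on oracle $c\oplus x$, tests whether $c$ codes a well-founded structure of the form $L_{\gamma}[x]$ with $\gamma$ \emph{minimal} such that $L_{\gamma}[x]\models\psi$ and with $c$ the $<_{L}$-least such code, outputting $1$ exactly in that case and $0$ otherwise; these checks are carried out by the usual techniques for evaluating first-order truth predicates in coded $L$-levels already used in Lemma \ref{ITRMfeatures}, and by the remark following the definition of recognizability we may assume $P_{\psi}$ halts with output in $\{0,1\}$ on every oracle. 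If $L[x]\models\psi$, then by $\Sigma_{1}$-reflection and stability $\psi$ first holds at some $x$-$\Sigma_{1}$-fixed level $\gamma<\sigma^{x}$, whose $<_{L}$-least code is the unique real recognized by $P_{\psi}$ relative to $x$ (this is exactly the recognizable real produced in the proof of Lemma \ref{recogsup}); if $L[x]\not\models\psi$, then no level satisfies the $\Sigma_{1}$-sentence $\psi$, so $P_{\psi}$ recognizes nothing. Hence $\psi\in T^{x}$ if and only if the index of $P_{\psi}$ lies in $x^{r}_{M}$, giving a primitive recursive reduction of $T^{x}$ to $x^{r}_{M}$.

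It then remains to show $T^{x}\notin L_{\sigma^{x}}[x]$, and for this I would use admissibility. By the relativization of Lemma \ref{barwise}, $\sigma^{x}$ is $x$-admissible and $\sigma^{x}=\sup\{\gamma:\gamma\text{ is }x\text{-}\Sigma_{1}\text{-fixed}\}$. If $T^{x}$ were an element of $L_{\sigma^{x}}[x]$, then the map sending each $\psi\in T^{x}$ to the least $\gamma$ with $L_{\gamma}[x]\models\psi$ would be $\Sigma_{1}$-definable over $L_{\sigma^{x}}[x]$ with domain the set $T^{x}\in L_{\sigma^{x}}[x]$ and range cofinal in $\sigma^{x}$, contradicting $\Sigma_{1}$-boundedness in the admissible structure $L_{\sigma^{x}}[x]$. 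But $x^{r}_{M}\in L_{\sigma^{x}}[x]$ by assumption, and since $L_{\sigma^{x}}[x]\models\text{KP}$ is closed under the primitive recursive reduction of the previous paragraph, $T^{x}$ would be $\Delta_{1}$-definable from $x^{r}_{M}$ and hence an element of $L_{\sigma^{x}}[x]$. This is the desired contradiction, so no such $R$ exists and $x^{r}_{M}$ is not $M$-recognizable relative to $x$.

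The main obstacle I expect is the uniform construction of $P_{\psi}$ across the three models $M\in\{\text{ITRM},\text{ITTM},\text{OTM}\}$ together with the precise relativized forms of the stability facts: one must verify that the evaluation of $\Sigma_{1}$-truth in coded $L[x]$-levels, the recognition of $<_{L}$-least codes, and the confinement of the recognizables to $L_{\sigma^{x}}[x]$ all go through uniformly in $x$ and for each machine model. Once these relativizations of Lemma \ref{recogsup} and Lemma \ref{barwise} are in place, the remaining boundedness argument is routine.
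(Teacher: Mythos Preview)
Your proposal is correct and follows essentially the same approach as the paper: both arguments assume $x^{r}_{M}\in L_{\sigma^{x}}[x]$ via the relativized Lemma~\ref{recogsup}, and then derive a $\Sigma_{1}$-definable total function from a set in $L_{\sigma^{x}}[x]$ cofinally into $\sigma^{x}$, contradicting the admissibility of $\sigma^{x}$ (Lemma~\ref{barwise}). The only organizational difference is that the paper constructs its cofinal map directly from $0^{r}_{M}$ (sending $i$ to the least $\alpha$ with $L_{\alpha}\models\exists y\,P_{f(i)}^{y}\downarrow=1$, where $f$ enumerates $0^{r}_{M}$), whereas you interpose the reduction $T^{x}\leq_{T}x^{r}_{M}$ (which is exactly Theorem~\ref{recjumpsigma1}) and then show $T^{x}\notin L_{\sigma^{x}}[x]$; this makes your version slightly more modular but is not a genuinely different route.
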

\begin{proof}
 We prove this for $x=0$. The proof relativizes to arbitrary oracles. 

By Lemma \ref{recogsup}, it suffices to show that $0^{r}_{M}\notin L_{\sigma}$. So assume otherwise for a contradiction.
By Lemma \ref{barwise} then, let $\phi$ be a $\Sigma_{1}$-formula such that $0^{r}_{M}$ is unique with the property that
$L\models\phi(0^{r}_{M})$. Clearly, $0^{r}_{M}$ is an infinite set. Hence the function $f:\omega\rightarrow\omega$
sending $i$ to the $i$th element of $0^{r}_{M}$ is total and definable in the parameter $0^{r}_{M}$ and hence also contained in $L_{\sigma}$.

Now, let $g:\omega\rightarrow\sigma$ be the function that sends $i\in\omega$ to the smallest $\alpha\in\text{On}$ such that
$L_{\alpha}\models\exists{x}P_{f(i)}^{x}\downarrow=1$. As $P_{f(i)}^{x}\downarrow=1$ is $\Sigma_1$ in the parameter $0^{r}_{M}$, 
such an $\alpha$ is clearly $\Sigma_1$-fixed and hence below $\sigma$ by Lemma \ref{barwise}. Moreover, $L_{\alpha}$ will contain
the unique real $x$ such that $P_{f(i)}^{x}\downarrow=1$. Hence, the supremum of these $\alpha$ will be $\sigma$ by Lemma \ref{recogsup}.
We show that $g$ is $\Sigma_{1}$-definable over $L_{\sigma}$. This will be the desired contradiction, as $g$ will then be a $\Sigma_1$-definable
total function mapping $\omega<\sigma$ cofinally into $\sigma$, contradicting the fact that $\sigma$ is admissible by Lemma \ref{barwise}.
But $g(i)=\alpha$ can be written as 
$\exists{x,y,j}[(\phi(x)\wedge j\in x\wedge|x\cap j|=i)\wedge(y=L_{\alpha}\wedge y\models(\exists{z}P_{j}^{z}\downarrow=1)\wedge y\models(\forall{\gamma\in\alpha}(L_{\gamma}\not\models(\exists{z}P_{j}^{z}\downarrow=1))))]$ 
(where the first conjunct says that $j$ is the $i$th element of $0^{r}_{M}$ while the second expresses that $\alpha$ is minimal with the property that $L_{\alpha}$ believes in the existence of some real $z$ with $P_{f(i)}^{z}\downarrow=1$),
which is $\Sigma_1$. 
\end{proof}

We can also show the unrecognizability of the recognizable jump more directly by a diagonalization argument that works rather generally for models of computation that allow universal programs (which includes ITTMs, OTMs, OTMs with a fixed parameter $\alpha$,
etc. but not ITRMs):

\begin{thm}{\label{recjumpnonrec}}
 The recognizable jump $0^{r}$ is not recognizable.
\end{thm}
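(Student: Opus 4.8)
The plan is to argue by contradiction using a self-referential (recursion-theoretic) diagonalization, exploiting that the presence of a universal program yields a recursion theorem for $M$. Suppose toward a contradiction that $0^r$ is recognizable, say by a program $R$; thus $R^z\downarrow\in\{0,1\}$ for every $z\subseteq\omega$ and $R^z\downarrow=1$ if and only if $z=0^r$. The point of having a recognizer for $0^r$ is that it lets a program detect, on the fly, the single oracle equal to $0^r$, and on that oracle the program can simply read $0^r$ off the tape bit by bit.

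First I would invoke the recursion theorem to obtain a program $P_e$ with index $e$ that has access to its own index and, on oracle $z\subseteq\omega$, behaves as follows. It runs $R^z$; since $R$ halts on every oracle, this terminates. If $R^z\downarrow=0$ (equivalently $z\neq 0^r$), then $P_e$ outputs $0$. If $R^z\downarrow=1$, then necessarily the oracle $z$ equals $0^r$, so $P_e$ reads the $e$-th bit of $z$ to decide whether $e\in 0^r$: if $e\in z$ it outputs $0$, and if $e\notin z$ it outputs $1$. In every case $P_e^z$ halts with output in $\{0,1\}$, so $P_e$ is a legitimate candidate recognizer.

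Next I would analyze the two possibilities for $e$. If $e\in 0^r$, then on the oracle $0^r$ the program outputs $0$ (by the last clause), while on every oracle $z\neq 0^r$ it also outputs $0$; hence $P_e^z\downarrow=0$ for all $z$, so $P_e$ recognizes no real, giving $e\notin 0^r$. Conversely, if $e\notin 0^r$, then on the oracle $0^r$ the program outputs $1$ and it outputs $0$ on every other oracle, so $P_e^z\downarrow=\delta(z,0^r)$; thus $P_e$ recognizes $0^r$ and $e\in 0^r$. Either way we obtain $e\in 0^r$ if and only if $e\notin 0^r$, the desired contradiction, so $0^r$ is not recognizable.

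The only non-routine point is the appeal to the recursion theorem: the passage from an index to the program $P_e$ described above is a primitive recursive operation on indices, so a fixed point $e$ exists precisely when $M$ supports the $s$-$m$-$n$ and recursion theorems, which is exactly what the existence of a universal program provides, and exactly what fails for ITRMs. Everything else is bookkeeping, namely the observation that $R$ converts the global predicate ``$P_e$ recognizes a real'' into the local bit ``$e\in 0^r$'' read directly from the distinguished oracle.
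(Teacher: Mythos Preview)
Your argument is correct. Both your proof and the paper's are diagonalizations that apply only to models with a universal program, but the mechanisms differ. The paper first shows that the join $z=(\bigoplus_i x_i)\oplus 0^r$ of all recognizable reals with $0^r$ would itself be recognizable, then explicitly constructs a real $\bar z$ that disagrees with every $x_i$ (hence is unrecognizable) yet is recognizable from $z$, a contradiction. You instead invoke the recursion theorem to produce a single self-referential program $P_e$ whose membership in $0^r$ is made to contradict itself. Your route is shorter and more classical in flavor; the paper's route is more concrete, exhibiting an actual unrecognizable real as a byproduct. Both arguments rely on the same essential feature---that a recognizer $R$ for $0^r$ lets a program convert the global question ``does $P_e$ recognize something?'' into a locally readable bit---and both explicitly fail for ITRMs for the same reason (no universal machine, hence no recursion theorem and no uniform simulation of all recognizers).
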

\begin{proof}
 Assume otherwise, so that $0^{r}\in\text{RECOG}$. Let $(P_{i}:i\in\omega)$ enumerate the programs. Denote by $j_{i}$ the $i$th element of $0^{r}$ for $i\in\omega$ (so $j_{i}$ is the index of the $i$th recognizing program) and
by $x_{i}$ the real recognized by $P_{j_{i}}$. We note that $x:=\oplus_{i\in\omega}x_{i}$ is recognizable relative to $0^{r}$ by observing that the following procedure recognizes $x$ relative to $0^{r}$:
Given $y=\oplus_{i\in\omega}y_{i}$ in the oracle, we perform the following for every $i\in\omega$: First, we find $j_{i}$ using $0^{r}$. Then, we run $P_{j_{i}}^{y_{i}}$. As $j_{i}\in 0^{r}$, $P_{j_{i}}^{y_{i}}$ must stop with output $0$ or $1$.
If the output is $0$, then $y\neq x$ and we stop with output $0$; otherwise, we continue. When we have run through all $i\in\omega$ in this way, then $x=y$.

It follows that $z:=x\oplus 0^{r}$ is recognizable (the second component $0^{r}$ is recognizable by assumption, the first then relative to the second by the above). We will now construct a nonrecognizable real $\bar{z}$ from $z$ by diagonalizing against
$(x_{i}:i\in\omega)$; as $\bar{z}$ will be seen to be recognizable if $z$ is, this will be a contradiction.

Let $p:\omega\times\omega\rightarrow\omega$ denote Cantor's pairing function.
The $0$th bit of $x_{i}$ is represented by the $2p(i,0)$th bit of $z$. We now define $\bar{z}$ by letting $\bar{z}(2p(i,0)):=1-z(2p(i,0))$ if $x_{i}(0)=x_{i}(2p(i,0))$ and $\bar{z}(j)=z(j)$ otherwise.
Note that the so constructed $\bar{z}$ will hence differ from $x_{i}$ in the $2p(i,0)$th bit for all $i\in\omega$: If $x_{i}(0)\neq x_{i}(2p(i,0))$, then $\bar{z}(2(p(i,0))=x_{i}(0)\neq x_{i}(2p(i,0))=z_{i}(2p(i,0))=\bar{z}(2p(i,0))$,
and if $x_{i}(0)=x_{i}(2p(i,0))$, then $\bar{z}(2(p(i,0))=x_{i}(0)=x_{i}(2p(i,0))=z_{i}(2p(i,0))\neq 1-z_{i}(2p(i,0))=\bar{z}(2p(i,0))$. As each recognizable real is among the $x_{i}$ and $\bar{z}$ is different from all the $x_{i}$,
$\bar{z}$ cannot be recognizable.

On the other hand, given that $0^{r}$ is recognizable, the following procedure recognizes $\bar{z}$: Given $y=y_{1}\oplus y_{2}$ in the oracle, first check whether $y_{2}=0^{r}$. If not, then $y\neq\bar{z}$.
Otherwise, let $y_{1}=\oplus_{i\in\omega}y_{1,i}$ and run the following procedure for each $i\in\omega$: First, check whether $y_{1,i}(0)=y_{1,i}(2p(i,0))$. If yes, then $y\neq\bar{z}$. Otherwise,
let $\tilde{y}_{1,i}(0)=1-y_{1,i}(0)$ and $\tilde{y}_{1,i}(j)=y_{1,i}(j)$ for $j>0$ and check whether $P_{j_{i}}^{y_{1,i}}\downarrow=1$ or $P_{j_{i}}^{\tilde{y}_{1,i}}\downarrow=1$. If not, then $y\neq\bar{z}$.
If, on the other hand, we have run through all natural numbers in this manner, then $y=\bar{z}$.

So it follows that $\bar{z}$ is both recognizable and unrecognizable, a contradiction. Thus $0^{r}$ is not recognizable.
\end{proof}

Iterating the argument for Theorem \ref{rjumpnonrec}, we get:

\begin{corollary}{\label{stableandrecogjump}}
 For $i\in\omega$ and $M\in\{\text{ITRM}, \text{ITTM}\}$, we have $r-0^{i}_{M}\in L_{\sigma_{i+1}}\setminus L_{\sigma_{i}}$. In particular, $r-0^{i}_{M}$ is not M-recognizable from $r-0^{i-1}_{M}$ for $i>0$. 
\end{corollary}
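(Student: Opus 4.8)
The plan is to iterate Theorem~\ref{rjumpnonrec} in relativized form while keeping exact track of the stable ordinals, using that all the relevant tools relativize to a real oracle $w$. Writing $\sigma^{w}$ for the least $w$-stable ordinal (the relativization of $\sigma$, cf. Definition~\ref{stable}), the relativizations of Lemma~\ref{recogsup} and Lemma~\ref{barwise} give that $\text{RECOG}_{M,w}\subseteq L_{\sigma^{w}}[w]$, that the $w$-$\Sigma_{1}$-fixed ordinals are cofinal in $\sigma^{w}$, and that $L_{\sigma^{w}}[w]\prec_{\Sigma_{1}}L[w]$. Granting these, the ``in particular'' clause is immediate: the relativization of Theorem~\ref{rjumpnonrec} to $w:=r-0^{i-1}_{M}$ says precisely that $w^{r}_{M}=r-0^{i}_{M}$ is not $M$-recognizable from $w$. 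So the substance is the placement $r-0^{i}_{M}\in L_{\sigma_{i+1}}\setminus L_{\sigma_{i}}$, which I would prove by induction on $i$.

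I would carry the strengthened induction hypothesis asserting, for $w_{i}:=r-0^{i}_{M}$, both that $w_{i}\in L_{\sigma_{i+1}}\setminus L_{\sigma_{i}}$ \emph{and} that $\sigma^{w_{i}}=\sigma_{i+1}$ (so in particular $w_{i}\in L_{\sigma^{w_{i}}}$); the base case is Theorem~\ref{rjumpnonrec} together with the upper-bound computation below. For the step I apply the relativization of Theorem~\ref{rjumpnonrec} to $w_{i}$: its proof produces a total function $g\colon\omega\to\sigma^{w_{i}}=\sigma_{i+1}$ with cofinal range which is $\Sigma_{1}$-definable in the parameter $w_{i+1}$ (note $w_{i}\leq_{T}w_{i+1}$ by Proposition~\ref{cjumpfromrjump}, so $w_{i}$ is itself available). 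This $g$ does two things. First, since $w_{i}\in L_{\sigma_{i+1}}=L_{\sigma^{w_{i}}}$ we have $L_{\sigma_{i+1}}[w_{i}]=L_{\sigma_{i+1}}$, so the very computation proving Theorem~\ref{rjumpnonrec} yields $w_{i+1}\notin L_{\sigma_{i+1}}$, the lower half of the claim for $i+1$. Second, $g$ witnesses that no $\alpha\le\sigma_{i+1}$ is $w_{i+1}$-stable: for $\alpha<\sigma_{i+1}$, $\Sigma_{1}$-reflection of $L_{\alpha}[w_{i+1}]$ would force $\operatorname{ran}(g)\subseteq\alpha$, contradicting cofinality, while $\sigma_{i+1}$ itself carries a $\Sigma_{1}(w_{i+1})$-definable cofinal map $\omega\to\sigma_{i+1}$ and hence is not even $w_{i+1}$-admissible, so not $w_{i+1}$-stable by Lemma~\ref{barwise}(2). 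Thus $\sigma^{w_{i+1}}>\sigma_{i+1}$.

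For the upper bound I would read off from the proof of Proposition~\ref{recjumpabs} that ``$P_{j}$ recognizes a real relative to $w_{i}$'' is a Boolean combination of $\Sigma_{1}(w_{i})$- and $\Pi_{1}(w_{i})$-conditions on $j$ (the four cases analysed there relativize, using $\Sigma_{1}$-expressibility of non-halting via $\omega_{\omega}^{\text{CK}}$ relativized for ITRMs and via $\lambda$ relativized for ITTMs). Since $\sigma^{w_{i}}=\sigma_{i+1}$ is $w_{i}$-stable, i.e. $L_{\sigma_{i+1}}[w_{i}]=L_{\sigma_{i+1}}\prec_{\Sigma_{1}}L$, each such $\Sigma_{1}(w_{i})$- or $\Pi_{1}(w_{i})$-condition has the same truth value in $L_{\sigma_{i+1}}$ as in $L$; hence $w_{i+1}=\{j:L_{\sigma_{i+1}}\models\psi(j,w_{i})\}$ for a fixed formula $\psi$ with parameter $w_{i}\in L_{\sigma_{i+1}}$, so $w_{i+1}\in L_{\sigma_{i+1}+1}\subseteq L_{\sigma_{i+2}}$. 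This gives the upper half. Finally, because $w_{i+1}\in L_{\sigma_{i+1}+1}$ we have $L_{\alpha}[w_{i+1}]=L_{\alpha}$ for every $\alpha>\sigma_{i+1}$, whence $w_{i+1}$-stability coincides with $1$-stability there; combined with $\sigma^{w_{i+1}}>\sigma_{i+1}$ this forces $\sigma^{w_{i+1}}=\sigma_{i+2}$, re-establishing the invariant for $i+1$.

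The step I expect to be the genuine obstacle is the bookkeeping of relativized stability: one must show at each stage that adjoining the oracle $w_{i}$ neither destroys the stable ordinal that is supposed to become $\sigma^{w_{i+1}}$ nor manufactures a spurious $w_{i+1}$-stable ordinal below it, so that the relativized least-stable ordinals line up exactly with the absolute $\sigma_{i}$ rather than drifting. This is handled by the two sharp facts just used --- $w_{i+1}\in L_{\sigma_{i+1}+1}$ (collapsing $w_{i+1}$-stability to $1$-stability above $\sigma_{i+1}$) and the cofinal $\Sigma_{1}(w_{i+1})$-map (killing $w_{i+1}$-stability at and below $\sigma_{i+1}$) --- but verifying that these interlock correctly, and that the Boolean-combination analysis of Proposition~\ref{recjumpabs} really decides $w_{i+1}$ inside $L_{\sigma^{w_{i}}}[w_{i}]$, is the part that has to be done with care. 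Everything else is a direct transcription of the unrelativized arguments.
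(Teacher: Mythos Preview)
Your approach is correct and runs parallel to the paper's, with one structural elaboration: you explicitly maintain the auxiliary invariant $\sigma^{w_i}=\sigma_{i+1}$, whereas the paper never introduces relativized stable ordinals and works directly with the absolute sequence $(\sigma_j)$. For the upper bound the two arguments coincide: the paper's Claim uses only that $\sigma_i$ is $1$-stable and that (inductively) $r\text{-}0^{i-1}_M\in L_{\sigma_i}$, so that each of the $\Sigma_1$-conditions $\phi_0,\phi_1,\phi_2$ from the Proposition~\ref{recjumpabs} analysis is absolute between $L_{\sigma_i}$ and $L$; this is exactly your Boolean-combination step, and both proofs land $r\text{-}0^{i}_M$ in $L_{\sigma_i+1}$. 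For the lower bound the paper replaces your relativized-stability bookkeeping by a citation to Barwise's characterization of $L_{\sigma_{j+1}}$ as the set of elements of $L$ that are $\Sigma_1$-definable with ordinal parameters $\le\sigma_j$, which supplies (somewhat implicitly) the cofinality of the range of the map $g$ in $\sigma_k$. Your version makes that cofinality fully transparent by pinning down $\sigma^{w_i}$ and then invoking the relativized Lemma~\ref{recogsup}, at the cost of having to re-establish the extra invariant at each stage; the paper's version is terser but leaves the reader to extract cofinality from the Barwise result. The substantive ingredients --- the cofinal $\Sigma_1$-definable map from the proof of Theorem~\ref{rjumpnonrec} for the lower bound and the absoluteness analysis of Proposition~\ref{recjumpabs} for the upper --- are the same in both.
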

\begin{proof}
We adapt the proof of Theorem \ref{rjumpnonrec}. As there, we can see that $r-0^{k}_{M}\notin L_{\sigma_{k}}$, using that, by Corollary 7.9 of \cite{Ba}, $L_{\sigma_{j+1}}$ consists of those elements of $L$
that are definable by ordinal parameters $\leq\sigma_{j}$ for all $j\in\omega$.
If we had $r-0^{k}_{M}\in L_{\sigma_{k}}$, the function $g$ sending each $i\in\omega$ to the smallest ordinal $\alpha_{i}$ such that $L_{\alpha_{i}}$ contains some computation
of $P^{x\oplus r-0^{k}_{M}}_{f(i)}$ that converges to $1$, where $x\subseteq\omega$ and $f(i)$ denotes the $i$th element of $r-0^{k}_{M}$ would be $\Sigma_{1}$-definable over $L_{\sigma_{k}}$ and cofinal in $\sigma_{k}$, so that
$\sigma_{k}$ could not be admissible, contradicting Corollary 7.6 of \cite{Ba}.

To see that $r-0^{i}_{M}\in L_{\sigma_{i+1}}$, we proceed inductively, using the assumption $r-0^{i-1}_{M}\in L_{\sigma_{i}}$ (for $i>0$) to show that
$r-0^{i}_{M}$ is definable over $L_{\sigma_{i}}$ and hence contained in $L_{\sigma_{i+1}}$.\\

\textbf{Claim}: For every $j\in\omega$, the program $P_{j}$ recognizes a real number relative to $r-0^{i-1}_{M}$ if and only if $L_{\sigma_i}$ believes that it does.

\begin{proof} For ITRMs and ITTMs, the property that $P^{x}\uparrow$ is $\Sigma_1$-expressable in the parameter $x$
by stating that $P^{x}$ does not halt in $\omega_{\omega}^{\text{CK},x}$ many steps or (by Theorem \ref{ITTMchar}) that there is a minimal triple $(\alpha,\beta,\gamma)$ 
with $L_{\alpha}[x]\prec_{\Sigma_{1}}L_{\beta}[x]\prec_{\Sigma_{2}}L_{\gamma}[x]$ and $P^{x}$ does not halt in
$\alpha$ many steps, respectively. Hence $\exists{x}P^{x\oplus r-0^{i-1}_{M}}\uparrow$ is $\Sigma_{1}$ in the parameter $r-0^{i-1}_{M}$ and thus absolute between $L_{\sigma_{i}}$ and the real world for every $P$.
Thus, if there was some real number $x$ for which $P^{x\oplus r-0^{i-1}_{M}}$ did not halt, such an $x$ would be contained in $L_{\sigma}$. Hence, the statement `For every real $x$, $P^{x\oplus r-0^{i-1}_{M}}$ halts'
is absolute for $L_{\sigma_{i}}$. Similarly, we get the absoluteness of $\phi_{0}(P):=$`For every $x$, $P^{x\oplus r-0^{i-1}_{M}}$ halts with output $0$ or $1$' for $L_{\sigma_{i}}$.
The statement $\phi_{1}(P):=\exists{x}P^{x\oplus r-0^{i-1}_{M}}\downarrow=1$ is $\Sigma_{1}$ in $r-0^{i-1}_{M}$ and thus absolute between $L_{\sigma_{i}}$ and $L$ by the stability of $\sigma_{i}$, and by the recursive inaccessibility of $\sigma_{i}$ and
Theorem \ref{relJK} also between $L$ and $V$. Finally, $\phi_{2}(P):=\exists{x,y}(x\neq y\wedge P^{x\oplus r-0^{i-1}_{M}}\downarrow=1\wedge P^{y\oplus r-0^{i-1}_{M}}\downarrow=1)$ is also $\Sigma_{1}$ in $r-0^{i-1}_{M}$ and hence absolute for $L_{\sigma_{i}}$
for the same reason. Hence the statement `$P$ recognizes some real number from $r-0^{i-1}_{M}$' is equivalent with $\phi_{0}(P)\wedge \phi_{1}(P)\wedge\neg\phi_{2}(P)$ and therefore absolute for $L_{\sigma_{i}}$, as desired.
Thus $k\in r-0^{i}_{M}$ if and only if $L_{\sigma_{i}}\models k\in r-0^{i}_{M}$ for all $k\in\omega$
\end{proof}

By the claim, $r-0^{i}_{M}$ is definable over $L_{\sigma_{i}}$ as the set $\{k\in\omega:\phi_{0}(P_{k})\wedge\phi_{1}(P_{k})\wedge\neg\phi_{2}(P_{k})\}$. Hence $r-0^{i}_{M}\in L_{\sigma_{i+1}}$.



\end{proof}

\textbf{Remark}: For parameter-free OTMs, we have no $\Sigma_{1}$-definable bound on the halting times so that $P^{x}\uparrow$ might fail to be $\Sigma_1$-expressible; hence the argument doesn't work in this case.
Clearly, $0^{r}_{\text{OTM}}$ is constructible when $V=L$, but we currently do not even know whether the constructibility of $0^{r}_{\text{OTM}}$ follows from ZFC alone.

\begin{prop}{\label{easyreduction}}
 $0^{r}_{ITRM}\leq_{T}0^{r}_{ITTM}\leq_{T}0^{r}_{OTM}$.
\end{prop}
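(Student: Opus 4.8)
The plan is to reduce each recognizable jump to the next by exploiting that the weaker machine model can be faithfully and uniformly simulated by the stronger one. The only inputs I need are the following two standard simulation facts. First, there is a primitive recursive function $h$ such that, for every ITRM-program $P_{i,\mathrm{ITRM}}$, the ITTM-program $P_{h(i),\mathrm{ITTM}}$ satisfies $P_{h(i),\mathrm{ITTM}}^{w}\downarrow=j$ if and only if $P_{i,\mathrm{ITRM}}^{w}\downarrow=j$, for all oracles $w\subseteq\omega$ and all $j\in\omega$; in particular the two computations converge on exactly the same oracles with exactly the same outputs. Second, and analogously, there is a primitive recursive function $h'$ translating each ITTM-program $P_{i,\mathrm{ITTM}}$ into an OTM-program $P_{h'(i),\mathrm{OTM}}$ with identical halting-and-output behaviour on every oracle. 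Both simulations are standard in the literature on these models; the only mildly delicate point in the first is implementing the ITRM limit rule (resetting a register to $0$ when the inferior limit of its contents is infinite) on the ITTM tape, which is routine and index-uniform.

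First I would establish $0^{r}_{\mathrm{ITRM}}\leq_{T}0^{r}_{\mathrm{ITTM}}$. Fix the translation $h$ above. I claim that for every $i\in\omega$, the program $P_{i,\mathrm{ITRM}}$ recognizes a real (relative to the empty oracle) if and only if $P_{h(i),\mathrm{ITTM}}$ does. Indeed, since $P_{h(i),\mathrm{ITTM}}^{w}$ and $P_{i,\mathrm{ITRM}}^{w}$ converge on exactly the same oracles $w$ with the same outputs, for any fixed real $y$ the condition $\forall z\, P_{i,\mathrm{ITRM}}^{z\oplus 0}\downarrow=\delta(z,y)$ holds if and only if $\forall z\, P_{h(i),\mathrm{ITTM}}^{z\oplus 0}\downarrow=\delta(z,y)$ holds. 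Existentially quantifying over $y$ gives $i\in 0^{r}_{\mathrm{ITRM}}$ if and only if $h(i)\in 0^{r}_{\mathrm{ITTM}}$. As $h$ is recursive, this is even a many-one reduction of $0^{r}_{\mathrm{ITRM}}$ to $0^{r}_{\mathrm{ITTM}}$, and in particular $0^{r}_{\mathrm{ITRM}}\leq_{T}0^{r}_{\mathrm{ITTM}}$.

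The second reduction $0^{r}_{\mathrm{ITTM}}\leq_{T}0^{r}_{\mathrm{OTM}}$ is obtained in exactly the same manner using $h'$: since $P_{h'(i),\mathrm{OTM}}$ and $P_{i,\mathrm{ITTM}}$ agree in halting behaviour and outputs on every oracle, a program recognizes a real in the ITTM sense if and only if its OTM-translate recognizes the same real, so $i\in 0^{r}_{\mathrm{ITTM}}$ if and only if $h'(i)\in 0^{r}_{\mathrm{OTM}}$, and recursiveness of $h'$ yields the reduction. Composing the two reductions gives the stated chain.

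The substantive content, and hence the only real obstacle, lies entirely in the two simulation facts: one must verify that the inclusions of machine strength $\mathrm{ITRM}\subseteq\mathrm{ITTM}\subseteq\mathrm{OTM}$ can be realized by \emph{step-faithful} simulations that agree on divergent as well as convergent computations over \emph{every} oracle (not merely on which reals each model can write), and that the passage from a program to its simulating program is index-uniform and recursive. Once these are in place the proposition is immediate, and no property specific to the jump or to recognizability beyond the defining condition of $0^{r}_{M}$ is needed.
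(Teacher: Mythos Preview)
Your argument is correct and follows essentially the same approach as the paper: both rely on the existence of recursive index-translations $f,g$ (your $h,h'$) that send each program to a simulating program in the stronger model with identical halting-and-output behaviour on every oracle, from which the many-one (hence Turing) reduction of the recognizable jumps is immediate. Your write-up simply makes explicit the step from faithful simulation to preservation of the defining condition of $0^{r}_{M}$, which the paper leaves to the reader.
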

\begin{proof}
It is easy to see that ITRM-programs can be simulated by ITTM-programs and ITTM-programs can be simulated by OTM-programs and there are recursive maps $f,g$ sending each ITRM-program P to an ITTM-program $P^{\prime}$
with the same behaviour for all oracles and each ITTM-program $Q$ to an OTM-program $\hat{Q}$ with the same behaviour for all oracles, respectively.
\end{proof}

The following results concern the strength of the recognizable jump operator.

We have shown above how to retrieve the halting information from $0^{(r)}$ for ITRMs. This can in fact be iterated:

\begin{thm}{\label{iteratedhalting}}
 From $0^{r}_{\text{ITRM}}$ and $i\in\omega$, we can Turing-compute $0^{(i)}_{\text{ITRM}}$ (i.e. the $i$th iterated halting problem) for ITRMs.
\end{thm}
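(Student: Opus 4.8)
The plan is to prove $0^{(i)}_{\text{ITRM}}\leq_{T}0^{r}_{\text{ITRM}}$ for every fixed $i\in\omega$, by iterating the reduction behind Proposition \ref{cjumpfromrjump}. The mechanism I would reuse is this: to decide a single yes/no question by one query to $0^{r}$, it suffices to build, uniformly and recursively in the data of the question, an ITRM-program $R$ that recognizes some real if and only if the answer is ``yes''. The standard device (as in Proposition \ref{cjumpfromrjump}) is to let $R$ halt with output $0$ on all oracles except one distinguished oracle, on which $R$ halts with output $1$ in the ``yes'' case and diverges in the ``no'' case; then $R$ recognizes a real exactly in the ``yes'' case, so the answer to $R\in 0^{r}$ settles the question. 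Since $0^{(0)}=\emptyset$ is computable and $0^{(1)}=0'$ is exactly Proposition \ref{cjumpfromrjump}, it remains to decide, for a given index $j$, whether $j\in 0^{(i)}$, i.e.\ whether $P_{j}^{0^{(i-1)}}(0)\downarrow$.

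The hard part is that I cannot access the oracle $0^{(i-1)}$: it is not ITRM-computable, and I cannot simply recognize it either, since verifying bit-by-bit that a candidate equals $0^{(i-1)}$ would require one fixed program to decide the halting problem for ITRM-programs with arbitrarily many registers, which is impossible as there is no universal ITRM. The way around this is to push the entire computation into a \emph{coded} structure and recognize a code for a suitable $L$-level instead. Concretely, by Theorem \ref{hp} (relativized) every halting computation $P_{m}^{u}(0)$ terminates in fewer than $\omega_{\omega}^{\text{CK},u}$ steps, so if $\alpha$ is taken large enough, say the least admissible ordinal above $\omega_{\omega}^{\text{CK},0^{(i-1)}}$, then $L_{\alpha}$ correctly computes the halting problem relative to any $u\in L_{\alpha}$ (a halting computation appears below $\alpha$, and a non-halting one appears nowhere), and in particular contains the whole finite tower $\emptyset=u_{0},u_{1},\dots,u_{i-1}=0^{(i-1)}$ defined internally by $u_{k+1}=\{m:L_{\alpha}\models P_{m}^{u_{k}}(0)\downarrow\}$, using that each $u_{k}\leq_{T}0^{(i-1)}$ keeps $\omega_{\omega}^{\text{CK},u_{k}}\leq\omega_{\omega}^{\text{CK},0^{(i-1)}}\leq\alpha$. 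Crucially, checking that a real $c$ is the $<_{L}$-least code of such an $L_{\alpha}$, and evaluating first-order assertions such as ``$L_{\alpha}\models P_{j}^{u_{i-1}}(0)\downarrow$'' over the coded structure, can be carried out by a single ITRM-program using a fixed number of registers, independent of $j$ and of the size of $\alpha$, via the code-recognition and truth-evaluation techniques already used in Lemma \ref{ITRMfeatures} and Lemma \ref{recogsup}; this is precisely what bypasses the register-count obstruction.

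With this in hand I would, given $j$, construct the program $Q_{j}$ as follows. On oracle $z$, $Q_{j}$ first tests, by the fixed-register evaluation just described, whether $z$ is the $<_{L}$-minimal code of the canonical $L_{\alpha}$ above, internally carrying the tower $u_{0},\dots,u_{i-1}$; if this test fails, $Q_{j}$ outputs $0$ and halts. If it succeeds, $Q_{j}$ checks inside the coded structure whether $P_{j}^{u_{i-1}}(0)\downarrow$, halting with output $1$ if so and diverging otherwise. Since the evaluation procedures are total, $Q_{j}$ halts with output $0$ on every non-canonical oracle; on the unique canonical code it halts with $1$ exactly when $P_{j}^{0^{(i-1)}}(0)\downarrow$ and diverges otherwise. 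Hence $Q_{j}$ recognizes a real, namely that canonical code, if and only if $j\in 0^{(i)}$, so $Q_{j}\in 0^{r}$ iff $j\in 0^{(i)}$. As the index of $Q_{j}$ is recursive in $j$ (with $i$ fixed), one query to $0^{r}$ per bit yields $0^{(i)}\leq_{T}0^{r}$. The main obstacle, and the one place where genuine care is needed, is the verification that all the internal checks — recognizing the canonical code, reconstructing the tower, and reading off $P_{j}$'s halting behaviour — really are uniformly first-order over the coded structure and hence executable with a bounded number of registers; this is exactly where Theorem \ref{hp}, Theorem \ref{relITRM} and the recognition machinery behind Lemma \ref{ITRMfeatures} and Lemma \ref{recogsup} do the work.
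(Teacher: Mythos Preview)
Your approach is correct and takes a genuinely different route from the paper. The paper's proof is shorter: it invokes the cited fact (Corollary~35 of \cite{Ca4}) that each $0^{(i)}_{\text{ITRM}}$ is itself ITRM-recognizable, fixes a recognizer $P$ for $0^{(i)}$, and for each $j$ builds a program $Q_j$ that recognizes the $<_L$-minimal real coding a halting computation of $P_j^{0^{(i)}}$ --- the oracle appearing inside the coded computation is verified to be $0^{(i)}$ simply by running $P$ on it. Thus $Q_j$ recognizes a real iff $j\in 0^{(i+1)}$, and an index for $Q_j$ is primitive recursive in $j$.

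Your route instead recognizes a code for a sufficiently large $L_\alpha$ and rebuilds the entire jump tower inside it. This is more self-contained --- you avoid importing the recognizability of $0^{(i)}$ --- but it requires an \emph{internal}, uniquely-determining characterization of the target $\alpha$: the description ``least admissible above $\omega_\omega^{\text{CK},0^{(i-1)}}$'' is external, and you must phrase the test so that no smaller admissible $L_\beta$ with a \emph{wrong} internally-defined tower $u_0^\beta,\dots,u_{i-1}^\beta$ accidentally passes. The natural fix is to demand that for every $k<i$ the candidate structure sees $\omega_\omega^{\text{CK},u_k}$ as an ordinal, and then argue by induction on $k$ that this forces $u_k=0^{(k)}$ (since once $\omega_\omega^{\text{CK},u_k}<\beta$ the internal halting predicate relative to $u_k$ is correct, hence $u_{k+1}^\beta=0^{(k+1)}$); this works and your pointer to the machinery behind Lemma~\ref{recogsup} is the right one. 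The paper's argument simply sidesteps this verification by outsourcing the correctness of the tower to the external recognizer for $0^{(i)}$.
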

\begin{proof}
 Recall that when $x$ is ITRM-recognizable, then so is $x^{\prime}$ (see Corollary $35$ of \cite{Ca4}) and hence so are all finite iterations of the jump of $x$.
Suppose we want to compute $0^{(i+1)}_{\text{ITRM}}$ from $0^{(r)}$. Let $P$ be an ITRM-program recognizing $0^{(i)}$.
Then, using $P$, it is easy to find (effectively) for every $j\in\omega$ an ITRM-program $Q_{j}$ such that $Q_{j}^{x}\downarrow=1$ if and only if $x$ is the $<_{L}$-minimal code
of a halting computation of $P_{j}^{0^{(i)}_{\text{ITRM}}}$. In fact, an index $f(j)$ for $Q_{j}$ can be obtained primitive recursively from $j$.
Now $Q_{j}$ will recognize a real number if and only if $P_{j}^{0^{(i)}}$ halts, i.e. $f(j)\in 0^{(r)}$ if and only if $j\in 0^{(i+1)}$. 
Hence $0^{(i+1)}_{\text{ITRM}}$ can be obtained by a Turing-computation from $0^{(r)}_{\text{ITRM}}$.
\end{proof}


We note here the somewhat curious fact that there are ITRM-unrecognizable real numbers $x$ between $0$ and $0^{\prime}_{\text{ITRM}}$ 
such that their ITRM-jump $x^{\prime}_{\text{ITRM}}$ is recognizable:

\begin{lemma}{\label{onlyjumprecog}}
 There is an ITRM-unrecognizable real $x$ such that $x^{\prime}$ is recognizable. In fact, $x$ can be taken to be strictly below $0^{\prime}_{\text{ITRM}}$.
\end{lemma}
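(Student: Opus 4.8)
The plan is to realize $x$ as a Cohen-generic real that is both non-recognizable and trapped, as a degree, strictly between $0$ and $0^{\prime}_{\text{ITRM}}$. First I would record that $0^{\prime}_{\text{ITRM}}$ is itself recognizable: this is the instance $x=0$ of the closure of recognizability under the ITRM-jump (Corollary $35$ of \cite{Ca4}, which is also invoked in the proof of Theorem \ref{iteratedhalting}). By Proposition \ref{recogstable}, recognizability is invariant under $\equiv_{\text{ITRM}}$, so it suffices to produce a non-recognizable real $x$ with $x\leq_{\text{ITRM}}0^{\prime}_{\text{ITRM}}$ and $x^{\prime}_{\text{ITRM}}\equiv_{\text{ITRM}}0^{\prime}_{\text{ITRM}}$. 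Such an $x$ then has recognizable jump, and since $0$ and $0^{\prime}_{\text{ITRM}}$ are recognizable while $x$ is not, Proposition \ref{recogstable} immediately gives $0<_{\text{ITRM}}x<_{\text{ITRM}}0^{\prime}_{\text{ITRM}}$.

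For the construction I would take $x=g$ to be a real Cohen-generic over $L_{\omega_{\omega}^{\text{CK}}}$. The crucial point is that such a $g$ can be built inside $L_{\omega_{\omega}^{\text{CK},0^{\prime}}}[0^{\prime}_{\text{ITRM}}]$: since $0^{\prime}_{\text{ITRM}}$ is not ITRM-computable we have $0^{\prime}_{\text{ITRM}}\notin L_{\omega_{\omega}^{\text{CK}}}$ by Theorem \ref{relITRM}, and more is true, namely $\omega_{\omega}^{\text{CK}}<\omega_{\omega}^{\text{CK},0^{\prime}}$ (the jump strictly raises the admissibility spectrum). Hence the countable structure $L_{\omega_{\omega}^{\text{CK}}}$ together with an enumeration of its dense sets is available inside $L_{\omega_{\omega}^{\text{CK},0^{\prime}}}[0^{\prime}_{\text{ITRM}}]$, and a Rasiowa--Sikorski construction carried out there yields a generic $g\in L_{\omega_{\omega}^{\text{CK},0^{\prime}}}[0^{\prime}_{\text{ITRM}}]$. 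By Theorem \ref{relITRM} this gives $g\leq_{\text{ITRM}}0^{\prime}_{\text{ITRM}}$, and since every ITRM-computable real lies in $L_{\omega_{\omega}^{\text{CK}}}$ whereas a generic $g$ does not, $g$ is non-computable, so $0<_{\text{ITRM}}g$.

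Next I would show $g$ is non-recognizable, reusing the forcing argument behind Theorem \ref{ITRMcomeager}. Suppose some program $P$ using $n$ registers recognized $g$. As $g$ is generic over $L_{\omega_{n+1}^{\text{CK}}+1}$, Lemma \ref{genhaltingtimes} gives $\omega_{n+1}^{\text{CK},g}=\omega_{n+1}^{\text{CK}}$, so $P^{g}\downarrow=1$ already inside $L_{\omega_{n+1}^{\text{CK}}}[g]$; by the forcing theorem for admissible sets there is a condition $p\subseteq g$ with $p\Vdash P^{\dot g}\downarrow=1$ over $L_{\omega_{n+1}^{\text{CK}}}$. But there are continuum-many reals extending $p$ that are generic over $L_{\omega_{n+1}^{\text{CK}}+1}$, so I may pick one, $h\neq g$; then $P^{h}\downarrow=1$ with $h\neq g$, contradicting that $P$ recognizes $g$. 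Hence no program recognizes $g$.

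Finally I would verify $g^{\prime}_{\text{ITRM}}\equiv_{\text{ITRM}}0^{\prime}_{\text{ITRM}}$. The reduction $0^{\prime}_{\text{ITRM}}\leq_{\text{ITRM}}g^{\prime}_{\text{ITRM}}$ is immediate: send an index $i$ primitive-recursively to an index of the program that ignores its oracle and simulates $P_{i}$ with the empty oracle, so that $i\in 0^{\prime}_{\text{ITRM}}$ iff that new index lies in $g^{\prime}_{\text{ITRM}}$. For $g^{\prime}_{\text{ITRM}}\leq_{\text{ITRM}}0^{\prime}_{\text{ITRM}}$, genericity over all of $L_{\omega_{\omega}^{\text{CK}}}$ yields $\omega_{\omega}^{\text{CK},g}=\omega_{\omega}^{\text{CK}}$, so by Theorem \ref{hp} each halting $P_{i}^{g}(0)$ halts before $\omega_{\omega}^{\text{CK}}$; thus $i\in g^{\prime}_{\text{ITRM}}$ is decided by searching a code for $L_{\omega_{\omega}^{\text{CK}}}[g]$ for the halting computation, and such a code is ITRM-computable from $0^{\prime}_{\text{ITRM}}$ because $g\leq_{\text{ITRM}}0^{\prime}_{\text{ITRM}}$ and $\omega_{\omega}^{\text{CK}}<\omega_{\omega}^{\text{CK},0^{\prime}}$ (Theorem \ref{relITRM}). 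Therefore $g^{\prime}_{\text{ITRM}}\equiv_{\text{ITRM}}0^{\prime}_{\text{ITRM}}$ is recognizable, and $g$ is the desired real. I expect the main obstacle to be the ordinal bookkeeping behind $\omega_{\omega}^{\text{CK}}<\omega_{\omega}^{\text{CK},0^{\prime}}$ and the fact that the generic can be produced inside $L_{\omega_{\omega}^{\text{CK},0^{\prime}}}[0^{\prime}_{\text{ITRM}}]$; once these are in hand, every remaining step reuses machinery already established above.
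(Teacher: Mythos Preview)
Your proposal is correct and takes essentially the same route as the paper: both choose $x$ to be a Cohen-generic over $L_{\omega_{\omega}^{\text{CK}}}$ that is ITRM-computable from $0^{\prime}_{\text{ITRM}}$, and then argue $x^{\prime}_{\text{ITRM}}\equiv_{\text{ITRM}}0^{\prime}_{\text{ITRM}}$, invoking recognizability of $0^{\prime}_{\text{ITRM}}$ and Proposition~\ref{recogstable}. The only noteworthy difference is in the reduction $x^{\prime}_{\text{ITRM}}\leq_{\text{ITRM}}0^{\prime}_{\text{ITRM}}$: the paper decides $P_{i}^{x}\downarrow$ by searching for a condition $p$ with $p\Vdash P_{i}^{\dot x}\downarrow$ over $L_{\omega_{n+1}^{\text{CK}}}$ (so the oracle $x$ is never used as data, only its genericity), whereas you first compute $g$ from $0^{\prime}_{\text{ITRM}}$ and then a code for $L_{\omega_{\omega}^{\text{CK}}}[g]$; you also supply the non-recognizability of $g$ and the strictness $g<_{\text{ITRM}}0^{\prime}_{\text{ITRM}}$ explicitly, while the paper defers these to \cite{Ca4}.
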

\begin{proof}
Trivially, we have $0^{\prime}_{\text{ITRM}}\leq_{\text{ITRM}}x^{\prime}_{\text{ITRM}}$ for any $x$.

Let $x$ be Cohen-generic over $L_{\omega_{\omega}^{\text{CK}}}$ and ITRM-computable from $0^{\prime}_{\text{ITRM}}$. It follows from Lemma $29$ of \cite{Ca4} that such an $x$ exists.
From Theorem $16$ of the same paper, it follows that $x<_{\text{ITRM}}0^{\prime}_{\text{ITRM}}$. Clearly, $x\notin L_{\omega_{\omega}^{\text{CK}}}$, so $x$ is not ITRM-computable.

We also have that $\omega_{\omega}^{\text{CK},x}=\omega_{\omega}^{\text{CK}}$. Now if $i\in\omega$ and $P_{i}$ is the $i$th ITRM-program using $n$ registers, 
then $P_{i}^{x}$ stops if and only if there is a forcing condition
$p$ such that $p\Vdash P_{i}^{\check{x}}\downarrow$ over $L_{\omega_{n+1}^{\text{CK}}}$. But from $0^{\prime}_{\text{ITRM}}$, we can compute a code for $L_{\omega_{n+1}^{\text{CK}}}$, which suffices both
to evaluate the statement that a condition $p$ forces a certain statement over $L_{\omega_{n+1}^{\text{CK}}}$, and to exhaustively search for such a condition. If none is found, then $P_{i}^{x}$ will not halt,
otherwise it will. This hence allows us to solve the halting problem relative to $x$ in the oracle $0^{\prime}_{\text{ITRM}}$, so that $x^{\prime}_{\text{ITRM}}\leq_{\text{ITRM}}0^{\prime}_{\text{ITRM}}$. 
Hence $x^{\prime}_{\text{ITRM}}\equiv_{\text{ITRM}}0^{\prime}_{\text{ITRM}}$.

As $0^{\prime}_{\text{ITRM}}$ is ITRM-recognizable by section $4$ of \cite{Ca2}, it follows from Proposition \ref{recogstable} that $x^{\prime}_{\text{ITRM}}$ is also ITRM-recognizable, so $x$ is as desired.
\end{proof}

With a similar idea to that of the proof of Lemma \ref{onlyjumprecog}, we also get the following, stronger statement that the jump of every real ITRM-computable from, but not ITRM-equivalent to 
$0^{\prime}_{\text{ITRM}}$ is `ITRM-low', i.e. has its jump equivalent to $0^{\prime}_{\text{ITRM}}$:

\begin{corollary}{\label{nojumpinv}}
Let $x<_{\text{ITRM}}0^{\prime}_{\text{ITRM}}$. Then $x^{\prime}_{\text{ITRM}}\equiv_{\text{ITRM}}0^{\prime}_{\text{ITRM}}$.
\end{corollary}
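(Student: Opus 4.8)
The plan is to prove the two reductions $0^{\prime}_{\text{ITRM}}\leq_{\text{ITRM}}x^{\prime}_{\text{ITRM}}$ and $x^{\prime}_{\text{ITRM}}\leq_{\text{ITRM}}0^{\prime}_{\text{ITRM}}$ separately. The first is immediate from monotonicity of the jump: since $0\leq_{\text{ITRM}}x$ trivially, running a program that ignores its oracle shows that $i\in 0^{\prime}_{\text{ITRM}}$ iff $j\in x^{\prime}_{\text{ITRM}}$ for an index $j$ obtained primitive-recursively from $i$, exactly as in the remark ``Trivially, $0^{\prime}_{\text{ITRM}}\leq_{\text{ITRM}}x^{\prime}_{\text{ITRM}}$'' used in the proof of Lemma \ref{onlyjumprecog}. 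The whole content is therefore the reduction $x^{\prime}_{\text{ITRM}}\leq_{\text{ITRM}}0^{\prime}_{\text{ITRM}}$, and the quantity controlling it will be $\omega_{\omega}^{\text{CK},x}$.

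The crucial structural fact I would establish first is that $\omega_{\omega}^{\text{CK},x}=\omega_{\omega}^{\text{CK}}$. Since adding an oracle can only thin out the admissible ordinals, $\omega_{\omega}^{\text{CK},x}\geq\omega_{\omega}^{\text{CK}}$ holds for every $x$, so only ``$\leq$'' is at issue, and I would prove it by contraposition, using the hypothesis $0^{\prime}_{\text{ITRM}}\not\leq_{\text{ITRM}}x$ that is implicit in $x<_{\text{ITRM}}0^{\prime}_{\text{ITRM}}$. Suppose $\omega_{\omega}^{\text{CK},x}>\omega_{\omega}^{\text{CK}}$. Then some $x$-admissible ordinal $\omega_{k}^{\text{CK},x}$ exceeds $\omega_{\omega}^{\text{CK}}$, so $L_{\omega_{k}^{\text{CK},x}}[x]$ is a model of KP containing the ordinal $\omega_{\omega}^{\text{CK}}$ and hence, by the usual $\Sigma_{1}$-recursion building the constructible hierarchy up to an ordinal in the model, contains $L_{\omega_{\omega}^{\text{CK}}}$ as an element. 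By Theorem \ref{relITRM}, a real coding $L_{\omega_{\omega}^{\text{CK}}}$ is then ITRM-computable from $x$; but from any code for $L_{\omega_{\omega}^{\text{CK}}}$ one recovers $0^{\prime}_{\text{ITRM}}$, since for $P_{i}$ using $n$ registers we have $i\in 0^{\prime}_{\text{ITRM}}$ iff $L_{\omega_{n+1}^{\text{CK}}}\models P_{i}\downarrow$ by Theorem \ref{hp}, and this is decided by the first-order evaluation technique of Lemma \ref{ITRMfeatures} inside the given code. This yields $0^{\prime}_{\text{ITRM}}\leq_{\text{ITRM}}x$, contradicting $x<_{\text{ITRM}}0^{\prime}_{\text{ITRM}}$.

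With this equality in hand, I would carry out the reduction in the spirit of Lemma \ref{onlyjumprecog}, but working over $L_{\omega_{\omega}^{\text{CK}}}[x]$ rather than a forcing extension. Given $0^{\prime}_{\text{ITRM}}$ in the oracle, a single fixed program can compute $x$ (as $x\leq_{\text{ITRM}}0^{\prime}_{\text{ITRM}}$) together with codes for $L_{\omega_{n}^{\text{CK}}}$ uniformly in $n$ (the uniformity in $n$ is exactly what $0^{\prime}_{\text{ITRM}}$ supplies and what is not available from $x$ alone, since building these levels directly needs unboundedly many registers); amalgamating, along these ordinals, the $x$-relativised hierarchy into one real gives a code for $L_{\omega_{\omega}^{\text{CK}}}[x]=L_{\omega_{\omega}^{\text{CK},x}}[x]$. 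To decide whether $i\in x^{\prime}_{\text{ITRM}}$, let $P_{i}$ use $n$ registers; by Theorem \ref{hp}, $P_{i}^{x}\downarrow$ iff it halts in fewer than $\omega_{n+1}^{\text{CK},x}<\omega_{\omega}^{\text{CK},x}$ steps, and this is determined by evaluating, inside the single computed code, whether the computation $P_{i}^{x}$ halts before level $\omega_{\omega}^{\text{CK},x}$, again via first-order evaluation. This is now a bounded, fixed-register procedure, so it defines an ITRM-reduction $x^{\prime}_{\text{ITRM}}\leq_{\text{ITRM}}0^{\prime}_{\text{ITRM}}$; combined with the first direction this gives $x^{\prime}_{\text{ITRM}}\equiv_{\text{ITRM}}0^{\prime}_{\text{ITRM}}$.

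I expect the main obstacle to be the identity $\omega_{\omega}^{\text{CK},x}=\omega_{\omega}^{\text{CK}}$, and specifically the contrapositive step showing that any $x$ pushing $\omega_{\omega}^{\text{CK}}$ strictly upward already computes a code for $L_{\omega_{\omega}^{\text{CK}}}$ and hence $0^{\prime}_{\text{ITRM}}$. This is where the strict hypothesis $x<_{\text{ITRM}}0^{\prime}_{\text{ITRM}}$ (rather than mere $x\leq_{\text{ITRM}}0^{\prime}_{\text{ITRM}}$) is essential, and it plays, for an arbitrary oracle, the role that the cheap genericity computation $\omega_{\omega}^{\text{CK},x}=\omega_{\omega}^{\text{CK}}$ played for the special $x$ of Lemma \ref{onlyjumprecog}. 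The assembly of a single $0^{\prime}_{\text{ITRM}}$-computable code for $L_{\omega_{\omega}^{\text{CK}}}[x]$ and the ensuing evaluation are then routine relativisations of constructions already present in Lemma \ref{ITRMfeatures} and Theorem \ref{hp}.
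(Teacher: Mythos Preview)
Your proposal is correct and follows essentially the same approach as the paper: establish $\omega_{\omega}^{\text{CK},x}=\omega_{\omega}^{\text{CK}}$ by contraposition (if strict, a code for $L_{\omega_{\omega}^{\text{CK}}}$ is ITRM-computable from $x$, hence so is $0^{\prime}_{\text{ITRM}}$), then use $0^{\prime}_{\text{ITRM}}$ to obtain a code for $L_{\omega_{\omega}^{\text{CK}}}[x]=L_{\omega_{\omega}^{\text{CK},x}}[x]$ and evaluate halting of $P_{i}^{x}$ inside it. The only cosmetic difference is that the paper obtains the code for $L_{\omega_{\omega}^{\text{CK}}}[x]$ by observing directly that $\omega_{\omega}^{\text{CK},0^{\prime}_{\text{ITRM}}}>\omega_{\omega}^{\text{CK}}$, so this code already lies in $L_{\omega_{\omega}^{\text{CK},0^{\prime}_{\text{ITRM}}}}[0^{\prime}_{\text{ITRM}}]$, whereas you assemble it by amalgamating the uniformly $0^{\prime}_{\text{ITRM}}$-computable codes for the $L_{\omega_{n}^{\text{CK}}}$; these are two phrasings of the same computation.
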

\begin{proof}
We claim that, for $x<_{\text{ITRM}}0^{\prime}_{\text{ITRM}}$, we have $\omega_{\omega}^{\text{CK},x}=\omega_{\omega}^{\text{CK}}$: Clearly, $\omega_{\omega}^{\text{CK},x}\geq\omega_{\omega}^{\text{CK}}$.
Suppose the inequality was strict. As already $L_{\omega_{\omega}^{\text{CK}+1}}$ contains a real $c$ coding $L_{\omega_{\omega}^{\text{CK}}}$ by a standard fine-structural argument, such a code
is then also contained in $L_{\omega_{\omega}^{\text{CK},x}}[x]$ and thus ITRM-computable from $x$. But, as ITRM-programs in the empty oracle either halt inside $L_{\omega_{\omega}^{\text{CK}}}$ or do not halt at all,
we can compute $0^{\prime}_{\text{ITRM}}$ by evaluating truth-predicates for first-order formulas in $L_{\omega_{\omega}^{\text{CK}}}$, which can be done uniformly by an ITRM in the oracle $c$.
Hence $0^{\prime}_{\text{ITRM}}\leq_{\text{ITRM}}c\leq x$, which contradicts the assumption that $x$ lies strictly below $0^{\prime}_{\text{ITRM}}$.

We can now argue as in the proof of Lemma \ref{onlyjumprecog} to see that $x^{\prime}_{\text{ITRM}}\leq_{\text{ITRM}}0^{\prime}_{\text{ITRM}}$: To compute $x^{\prime}$ from $0^{\prime}_{\text{ITRM}}$, first compute $x$ from $0^{\prime}_{\text{ITRM}}$,
which is possible by assumption. Therefore, we have $x\in L_{\omega_{\omega}^{\text{CK},0^{\prime}_{\text{ITRM}}}}[0^{\prime}_{\text{ITRM}}]$ and as $\omega_{\omega}^{\text{CK},0^{\prime}_{\text{ITRM}}}>\omega_{\omega}^{\text{CK}}$,
it follows that $L_{\omega_{\omega}^{\text{CK},0^{\prime}_{\text{ITRM}}}}[0^{\prime}_{\text{ITRM}}]$ contains a code $\bar{c}$ for $L_{\omega_{\omega}^{\text{CK}}}[x]=L_{\omega_{\omega}^{\text{CK},x}}[x]$. Hence such a code
is ITRM-computable from $0^{\prime}_{\text{ITRM}}$. As above, $c^{\prime}$ can then be used to solve the halting problem relative to $x$, so $x^{\prime}_{\text{ITRM}}\leq_{\text{ITRM}}\bar{c}\leq_{\text{ITRM}}0^{\prime}_{\text{ITRM}}$, as desired.
\end{proof}

\textbf{Remark}: Shoenfield's jump inversion theorem implies that, for Turing machines, there is some $c<_{T}0^{\prime}$ such that $c^{\prime}=0^{\prime\prime}$. 
Corollary \ref{nojumpinv} shows that this is impossible for ITRMs.\\

We now characterize the computational strength of the recognizable jump.

\begin{defini}
For $N\models\text{ZFC}$, let $\mathcal{T}^{N}$ denote the set of parameter-free $\Sigma_{1}$-statements that hold in $N$.
\end{defini}

Note that $\mathcal{T}$ is absolute between transitive models of ZFC by Shoenfield's absoluteness theorem; as we are only interested in transitive models here, we can simply write $\mathcal{T}$.
In particular, we have $\mathcal{T}=\mathcal{T}^{L}$.

\begin{thm}{\label{recjumpsigma1}}
Let $0^{(r)}$ denote the recognizable jump for any of ITRMs, ITTMs and parameter-free OTMs. Then $\mathcal{T}\leq_{T}0^{(r)}$, i.e. the parameter-free $\Sigma_{1}$-theory of $L$ (and hence of $V$) $\mathcal{T}$ is Turing-reducible to
the recognizable jump.
\end{thm}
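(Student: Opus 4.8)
The plan is to exhibit a many-one (indeed primitive recursive) reduction of $\mathcal{T}$ to $0^{(r)}$, from which the claimed Turing reduction is immediate. The guiding observation combines Lemma \ref{recogsup} with Lemma \ref{barwise}: a parameter-free $\Sigma_{1}$-statement $\phi$ lies in $\mathcal{T}$ if and only if $L\models\phi$, and (since a $\Sigma_{1}$-statement is witnessed at some level of the $L$-hierarchy and is upward absolute) this holds if and only if there is an ordinal $\alpha$ with $L_{\alpha}\models\phi$. When this is the case, the least such $\alpha$ is by Definition \ref{stable} $\Sigma_{1}$-fixed, hence $\alpha<\sigma$ by Lemma \ref{barwise}, so the $<_{L}$-minimal real coding $L_{\alpha}$ lies in $L_{\sigma}$ and is $M$-recognizable, exactly as in the proof of Lemma \ref{recogsup}.

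First I would, uniformly in (a code for) $\phi$, write down an $M$-program $Q_{\phi}$ meant to recognize the $<_{L}$-minimal code of the least $L$-level modelling $\phi$. On oracle $z$, the program $Q_{\phi}$ checks, using the techniques for deciding well-foundedness of a coded $\in$-structure and for evaluating first-order truth predicates that underlie the respective lost-melody theorems, whether $z$ codes a well-founded structure of the form $L_{\alpha}$ with $L_{\alpha}\models\phi$, with $L_{\beta}\not\models\phi$ for all $\beta<\alpha$, and with $z$ the $<_{L}$-least real coding $L_{\alpha}$; it outputs $1$ if all these checks succeed and $0$ otherwise. Since each verification only inspects the fixed real $z$ and the structure it codes, $Q_{\phi}$ halts on every oracle with output in $\{0,1\}$, so it is total. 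An index $i(\phi)$ for $Q_{\phi}$ is obtained effectively (in fact primitive recursively) from $\phi$, as $Q_{\phi}$ arises from a fixed template by substituting $\phi$.

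Next I would verify that $\phi\in\mathcal{T}$ if and only if $i(\phi)\in 0^{(r)}$. If $\phi\in\mathcal{T}$, then the least $\alpha$ with $L_{\alpha}\models\phi$ exists and satisfies $\alpha<\sigma$; the $<_{L}$-least code $c_{\phi}$ of $L_{\alpha}$ is then the unique oracle on which $Q_{\phi}$ outputs $1$, so $Q_{\phi}$ recognizes $c_{\phi}$ and $i(\phi)\in 0^{(r)}$. Conversely, if $\phi\notin\mathcal{T}$ then no $L$-level models $\phi$, so the verification inside $Q_{\phi}$ can never succeed and $Q_{\phi}$ outputs $0$ on every oracle; in particular there is no real $y$ with $Q_{\phi}^{z}\downarrow=\delta(z,y)$ for all $z$ (as $\delta(y,y)=1$ would be needed), whence $i(\phi)\notin 0^{(r)}$. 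Thus $\phi\mapsto i(\phi)$ is a computable reduction of $\mathcal{T}$ to $0^{(r)}$, giving $\mathcal{T}\leq_{T}0^{(r)}$.

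The hard part will be certifying that $Q_{\phi}$ genuinely halts on all oracles with output in $\{0,1\}$, since it is precisely this totality that places $i(\phi)$ into $0^{(r)}$ in the positive case (the recognizable jump quantifies only over programs that halt everywhere). Concretely this amounts to checking that each of the three models can, on a given oracle $z$, decide in bounded time whether $z$ codes a well-founded $L$-level, evaluate $\phi$ together with the minimality and $<_{L}$-leastness conditions there, and then halt --- the same capabilities invoked in the lost-melody constructions behind Lemma \ref{recogsup}. For parameter-free OTMs one must additionally observe that the relevant searches remain below a computable bound so that the verification terminates; the negative direction, by contrast, is insensitive to totality, since it uses only that $Q_{\phi}$ never outputs $1$.
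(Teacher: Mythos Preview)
Your proposal is correct and follows essentially the same route as the paper's proof: build, primitive recursively in $\phi$, a program that accepts exactly the $<_{L}$-least code of the minimal $L$-level satisfying $\phi$, and observe that this program recognizes a real if and only if $\phi\in\mathcal{T}$. You are somewhat more explicit than the paper about the minimality-of-$\alpha$ clause and about the totality requirement built into the definition of $0^{(r)}$, but the underlying argument is the same.
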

\begin{proof}
 Let $(\phi_{i}:i\in\omega)$ enumerate the set-theoretical $\Sigma_{1}$-statements without free variables in some natural way. Let $i\in\omega$ be arbitrary and assume that $L\models\phi_{i}$. 
Then there is a minimal $\alpha\in\text{On}$ such that $L_{\alpha}\models\phi_{i}$. By an easy condensation argument, we have $\alpha<\omega_{1}^{L}$.
Hence, there is some $<_{L}$-minimal real $r_{i}$ coding $L_{\alpha}$.

Now, given $i\in\omega$, it is easy to write a program $C_{i}$ that checks whether its oracle $y$ is a $<_{L}$-minimal code for an $L$-structure $L_{\alpha}$ such that
$L_{\alpha}\models\phi_{i}$: In fact, $C_{i}$ can be obtained from $i$ primitive recursively. Let $c(i)$ denote the index of $C_{i}$ in the enumeration $(P_{i}:i\in\omega)$
of programs.

But if $\phi_{i}$ holds, then there is a unique oracle $x$ such that $C_{i}^{x}\downarrow=1$, namely $x=r_{i}$. If, on the other hand $\phi_{i}$ is false, then there is no such oracle.
Hence $C_{i}$ recognizes a real number if and only if $\phi_{i}$ holds, i.e. $\phi_{i}$ holds if and only if $c(i)\in 0^{(r)}$. As $c(i)$ is primitive recursive in $i$, we can
Turing-compute $\Sigma_{1}$-truth from $0^{(r)}$.
\end{proof}

For ITTMs and ITRMs, we also get the converse. This can be seen as the recognizable counterpart of a theorem of Welch (see Corollary $1$ of \cite{We2}) showing that the (computational) ITTM-jump of a real $x$ corresponds
to a Master code for $L_{\lambda^{x}}[x]$, i.e. a $\Sigma_{1}$-truth predicate for that structure:

\begin{thm}{\label{sigma1recjump}}
 Let $0^{(r)}$ denote the recognizable jump for ITRMs or ITTMs. Then $0^{(r)}\leq_{T}\mathcal{T}$, i.e. the recognizable jump of $0$ for ITRMs and ITTMs is Turing-reducible (and hence, by Theorem \ref{recjumpsigma1}, Turing-equivalent) 
to the parameter-free $\Sigma_{1}$-theory of $L$.
\end{thm}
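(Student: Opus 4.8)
The plan is to reduce membership in $0^{(r)}$ to a fixed finite number of queries to $\mathcal{T}$, exploiting the analysis of non-recognition already carried out in the proof of Proposition \ref{recjumpabs}. Recall from there that a program $P$ fails to recognize a real number if and only if at least one of the following holds: (1) there is a real $x$ with $P^{x}\uparrow$; (2) there is a real $x$ with $P^{x}\downarrow\notin\{0,1\}$; (3) there is no real $x$ with $P^{x}\downarrow=1$; (4) there are distinct reals $x,y$ with $P^{x}\downarrow=1$ and $P^{y}\downarrow=1$. Consequently, $i\in 0^{(r)}$ if and only if $P_{i}$ recognizes a real, which is equivalent to the conjunction of $\neg(1),\neg(2),\neg(3),\neg(4)$, i.e. $P_{i}$ halts with output in $\{0,1\}$ on every oracle and there is exactly one oracle on which it outputs $1$.

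The key observation is that each of the four clauses is a parameter-free $\Sigma_{1}$-statement, uniformly in the index $i$. Conditions (2), (4), and the negation $\exists x\, P_{i}^{x}\downarrow=1$ of (3) are manifestly $\Sigma_{1}$, since halting of a computation with a prescribed output is $\Sigma_{1}$ in the oracle and the oracles are themselves existentially quantified reals. Condition (1) is the only delicate one; as noted in the proof of Proposition \ref{recjumpabs}, it can be rendered $\Sigma_{1}$ for ITRMs as ``there is $y\subseteq\omega$ with $L_{\omega_{\omega}^{\text{CK},y}}[y]\models P_{i}^{y}\uparrow$'' and for ITTMs as ``there is $y\subseteq\omega$ with $L_{\lambda^{y}}[y]\models P_{i}^{y}\uparrow$'', using the $\Sigma_{1}$-definable bounds on halting times supplied by Theorem \ref{hp} and Theorem \ref{ITTMchar} respectively. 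Since $i$ is a natural number, the program $P_{i}$ is a finite object whose description can be written directly into the formula, so the four corresponding $\Sigma_{1}$-sentences $\psi_{1}(i),\psi_{2}(i),\psi_{3}(i),\psi_{4}(i)$ (where $\psi_{3}(i)$ abbreviates $\exists x\, P_{i}^{x}\downarrow=1$, i.e. $\neg(3)$) can be produced from $i$ primitive recursively.

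I would then run the reduction as follows. Given $i$, compute the G\"odel numbers of $\psi_{1}(i),\psi_{2}(i),\psi_{3}(i),\psi_{4}(i)$ and query $\mathcal{T}$ at each. Since $\mathcal{T}$ is precisely the set of parameter-free $\Sigma_{1}$-sentences true in $L$, and since by Shoenfield absoluteness $\mathcal{T}=\mathcal{T}^{L}=\mathcal{T}^{V}$, a $\Sigma_{1}$-sentence is true in the real world exactly when it lies in $\mathcal{T}$; hence each query decides the corresponding clause both positively and negatively. Thus $i\in 0^{(r)}$ if and only if $\psi_{1}(i)\notin\mathcal{T}$, $\psi_{2}(i)\notin\mathcal{T}$, $\psi_{3}(i)\in\mathcal{T}$ and $\psi_{4}(i)\notin\mathcal{T}$. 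This is a total procedure recursive in the oracle $\mathcal{T}$, whence $0^{(r)}\leq_{T}\mathcal{T}$.

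The conceptual point, and the only real obstacle, is that ``$P_{i}$ recognizes a real'' is not itself $\Sigma_{1}$ but a Boolean combination of $\Sigma_{1}$-sentences (of the shape $\Sigma_{1}\wedge\Pi_{1}\wedge\Pi_{1}\wedge\Pi_{1}$); it is therefore not decidable from a mere $\Sigma_{1}$-semidecision procedure, but becomes decidable once one has $\mathcal{T}$ \emph{as a set}, which settles $\Sigma_{1}$-sentences negatively as well. The genuine mathematical content sits in the $\Sigma_{1}$-expressibility of clause (1), which rests on the halting-time bounds of Theorems \ref{hp} and \ref{ITTMchar}; this is exactly the ingredient unavailable for parameter-free OTMs, which is why the theorem is confined to ITRMs and ITTMs.
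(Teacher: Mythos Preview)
Your proof is correct and follows essentially the same approach as the paper: decompose ``$P_i$ recognizes a real'' into a Boolean combination of parameter-free $\Sigma_1$-sentences (with divergence rendered $\Sigma_1$ via the halting-time bounds of Theorems \ref{hp} and \ref{ITTMchar}), observe that these sentences are primitive recursive in $i$, and query $\mathcal{T}$ to decide each. The only cosmetic difference is that the paper merges your clauses (1) and (2) into a single $\Sigma_1$-sentence, working with three formulas rather than four.
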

\begin{proof}
We start by noting that, for $P$ an ITRM- or an ITTM-program, the following statements are $\Sigma_{1}$:
\begin{enumerate}
 \item There is some $x\subseteq\omega$ such that $P^{x}\downarrow=1$
 \item There is some $x\subseteq\omega$ such that $P^{x}\uparrow$ or $P^{x}\downarrow\notin\{0,1\}$
 \item There are $x,y\subseteq\omega$ such that $x\neq y$ and $P^{x}\downarrow=1$ and $P^{y}\downarrow=1$ 
\end{enumerate}
This is straightforward for (1), which can be expressed as `There is $x\subseteq\omega$ and a $P$-computation in the oracle $x$ that contains a halting state with $1$ written in the first register/on the first tape cell,
and similarly for (3) and the claim that there is $x$ such that $P^{x}\downarrow\notin\{0,1\}$. The only complication arises with the statement that $P^{x}$ diverges for some $x$. We treat the ITRM- and the ITTM-case separately:

For ITRMs, $P^{x}\uparrow$ means, by Theorem \ref{hp}, that $L_{\omega_{\omega}^{\text{CK},x}}[x]\models P^{x}\uparrow$, which can be expressed as `There are a set $y$ and an ordinal $\alpha$ such that
$y=L_{\alpha}[x]$, $y$ contains infinitely many $x$-admissible ordinals and $y\models P^{x}\uparrow$'. As `$y=L_{\alpha}[x]$' is $\Sigma_{1}$ in $x$ and $\alpha$, this is a $\Sigma_{1}$-statement.

For ITTMs, $P^{x}\uparrow$ means, by definition of $\lambda^{x}$, that $L_{\lambda^{x}}[x]\models P^{x}\uparrow$; this can, by \cite{We3}, be expressed as `There are sets $a,b,c$ and ordinals $\alpha,\beta,\gamma$ such that
$a=L_{\alpha}[x]$, $b=L_{\beta}[x]$ and $c=L_{\gamma}[x]$, $a\prec_{\Sigma_{1}}b$, $b\prec_{\Sigma_{2}}c$ and $a\models P^{x}\uparrow$', which again is $\Sigma_{1}$.

Now, it is easy to see that the statements $\phi_{1}^{P,\text{ITRM}},\phi_{2}^{P,\text{ITRM}},\phi_{3}^{P,\text{ITRM}}$ and $\phi_{1}^{P,\text{ITTM}},\phi_{2}^{P,\text{ITTM}},\phi_{3}^{P,\text{ITTM}}$
 expressing (1)-(3) for ITRMs and ITTMs in $\Sigma_{1}$-form, respectively, can be obtained primitive recursively from $P$. Note that, as parameter-free $\Sigma_{1}$-statements, all of these statements
are absolute between $V$ and $L$ by Shoenfield absoluteness. Hence $P$ is recognizing if and only if $\phi_{1}^{P}$ holds and $\phi_{2}^{P},\phi_{3}^{P}$ are false (for the machine type in question), 
which, by absoluteness, holds if and only if the hold in $L$, which in turn can be checked by using
the (oracle coding the) parameter-free $\Sigma_{1}$-theory of $L$.

\end{proof}

\textbf{Remark}: This doesn't work for OTMs since there is no bound on the halting time of an OTM in the oracle $x$ that is $\Sigma_{1}$-definable in the oracle $x$, so that $\exists{x}P^{x}\uparrow$ is not $\Sigma_{1}$-expressible.
It is in fact easy to see that it is not: For there is a parameter-free OTM that, given a parameter-free $\Sigma_{1}$-statement $\phi$, halts if and only if $\phi$ holds: $P$ works simply by writing $L$ on the tape and checking
at each level whether $\phi$ holds in it. Also, the statement that an OTM-program $Q$ halts is clearly $\Sigma_{1}$-expressible. If the divergence of $Q$ was $\Sigma_{1}$-expressible as well, OTMs could solve their own halting problem,
which they clearly can't. At this moment, we do not know of a characterization of the recognizable OTM-jump in the spirit of Theorem \ref{sigma1recjump}.\\

The concept of relativized recognizability makes it tempting to define `degrees of recognizability'. This, however, is hindered by the observation made in \cite{Ca1} that relativized recognizability is not transitive. There are two ways
around this: One can either give up on having degrees and merely study the reducibility relation on single real numbers, or one can replace relativized recognizability with its transitive closure to make it an equivalence relation. We shall take the second route here.

\begin{defini}{\label{recclos}}
We let $x\preceq_{M}y$ if and only if there is a finite sequence $x=z_{0},z_{1},...,z_{n}=y$ such that $z_{i}\leq^{r}_{M}z_{i+1}$ for $i\in\{0,...,n-1\}$. If $x\preceq_{M}y$, we say that
$x$ is heriditarily $M$-recognizable from $y$.
\end{defini}

\textbf{Remark}: In fact, the reduction turns out to be much simpler: A two-step iteration suffices to give the whole transitive closure. 
Even more: It is not hard to show (see e.g. \cite{CS1}) that $x\preceq_{M}y$ if and only if there is $z$ such that $z\leq^{r}_{M}y$ and $x$ is primitive recursive in $z$ (and less).

\begin{defini}{\label{receq}}
 $x$ and $y$ are called $M$-recognizably equivalent, written $x\asymp_{M} y$ if and only if $x\preceq_{M}y$ and $y\preceq_{M}x$. It is easy to see that $M$-recognizable equivalence is indeed an equivalence relation. $[x]^{r}_{M}$ will denote the 
$\asymp_{M}$-equivalence class of $x$, called $M$-recognizability degree of $x$.
\end{defini}

We can now formulate and solve a recognizable analogue for Post's problem. For convenience, we write $\alpha_{M}$ where $\alpha_{\text{ITRM}}^{x}=\omega_{\omega}^{\text{CK},x}$, $\alpha_{\text{ITTM}}^{x}=\lambda^{x}$ and
$\alpha_{\text{OTM}}=\sigma^{x}$ for $x\subseteq\omega$. We call $\alpha_{M}^{x}$ the $M$-characteristic ordinal for $x$.

\begin{thm}{\label{intermedrecogdeg}}
For $M\in\{\text{ITRM},\text{ITTM}\}$, there is $x$ such that $[0]^{r}_{M}\prec_{M}[x]^{r}_{M}\prec_{M}[0^{r}_{M}]^{r}_{M}$.
\end{thm}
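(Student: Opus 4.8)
The plan is to solve this recognizable Post's problem by exhibiting, as the intermediate real, a Cohen-generic real over the first stable level $L_\sigma$, and to separate all three degrees by means of the stability hierarchy of Definition \ref{stable} together with the bounds of Lemma \ref{recogsup} and Corollary \ref{stableandrecogjump}. Concretely, I would let $g$ be the canonical Cohen-generic real over $L_{\sigma+1}$ obtained by running the relativized Rasiowa--Sikorski construction (as in part (2) of Lemma \ref{ITRMfeatures}) on a $<_L$-least code for $L_{\sigma+2}$; thus $g\in L_{\sigma+2}\subseteq L_{\sigma_1}$ and, being generic, $g\notin L_\sigma$. The claim is that $x:=g$ witnesses $[0]^r_M\prec_M[g]^r_M\prec_M[0^r_M]^r_M$.

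First I would establish $[0]^r_M\prec_M[g]^r_M$. By Lemma \ref{recogsup} every $M$-recognizable real lies in $L_\sigma$, and since $L_\sigma$ is admissible (indeed recursively inaccessible by Lemma \ref{barwise}) it is closed under primitive recursive reductions; hence by the Remark following Definition \ref{recclos} any $a$ with $a\preceq_M 0$ already lies in $L_\sigma$. As $g\notin L_\sigma$ we obtain $g\not\preceq_M 0$, whereas $0\preceq_M g$ is trivial, the empty real being recognizable relative to every oracle. This yields the first strict inequality.

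Next, $g\preceq_M 0^r_M$. Using the Turing equivalence $0^r_M\equiv_T\mathcal{T}$ of Theorems \ref{recjumpsigma1} and \ref{sigma1recjump}, one checks that the $M$-characteristic ordinal $\alpha_M^{0^r_M}$ exceeds $\sigma_1$: relative to $\mathcal{T}$ one can build the constructible hierarchy and test $\Sigma_1$-stability of successive levels against $\mathcal{T}$, a process running well past $\sigma_1$. Since $g\in L_{\sigma_1}\subseteq L_{\alpha_M^{0^r_M}}[0^r_M]$, Theorem \ref{relITRM} (for ITRMs) respectively Theorem \ref{ITTMchar} (for ITTMs) shows $g$ to be $M$-computable from $0^r_M$; as computability implies recognizability, $g\leq^r_M 0^r_M$, and hence $g\preceq_M 0^r_M$. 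The conceptual heart of the argument is the reverse non-reduction $0^r_M\not\preceq_M g$. Because $g\in L_{\sigma_1}$ and $\sigma_1$ is $1$-stable we have $L_{\sigma_1}[g]=L_{\sigma_1}\prec_{\Sigma_1}L=L[g]$, so $\sigma_1$ is $g$-stable and therefore $\sigma^g\le\sigma_1$. By the relativization of Lemma \ref{recogsup}, every real recognizable from $g$ lies in $L_{\sigma^g}[g]\subseteq L_{\sigma_1}$, and $L_{\sigma_1}$ is admissible, hence closed under primitive recursive reductions. Now if $0^r_M\preceq_M g$ held, then by the Remark after Definition \ref{recclos} there would be some $z\leq^r_M g$ with $0^r_M$ primitive recursive in $z$; but then $z\in L_{\sigma_1}$ forces $0^r_M\in L_{\sigma_1}$, contradicting $0^r_M\notin L_{\sigma_1}$ from Corollary \ref{stableandrecogjump}. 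Thus $0^r_M\not\preceq_M g$, while $0^r_M\preceq_M 0^r_M$ is trivial, giving the second strict inequality and the theorem.

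The step I expect to be the main obstacle is the middle one, $g\preceq_M 0^r_M$: verifying that $\alpha_M^{0^r_M}>\sigma_1$ (so that the generic $g$ is genuinely $M$-computable from the recognizable jump) requires a careful analysis of the characteristic ordinal relative to $0^r_M$ through the identification $0^r_M\equiv_T\mathcal{T}$, and one must also confirm that the relativized form of Lemma \ref{recogsup} invoked in the final step is at hand. The remaining ingredients—the location $g\in L_{\sigma_1}\setminus L_\sigma$ and the $g$-stability of $\sigma_1$—are immediate from the stability hierarchy, so the argument reduces cleanly to these two bookkeeping points once the candidate real $g$ is fixed.
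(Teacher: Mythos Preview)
Your choice of witness $g$ (the $<_L$-least Cohen-generic over $L_{\sigma+1}$) matches the paper's, and your argument for $[0]^r_M\prec_M[g]^r_M$ is fine. The proof breaks down, however, at the crucial step $0^r_M\not\preceq_M g$.

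Your argument there rests on the claim $0^r_M\notin L_{\sigma_1}$, but this is false: in fact $0^r_M\in L_{\sigma+1}\subseteq L_{\sigma_1}$. The point is that ``$P_i$ recognizes a real'' is a Boolean combination of the statements $\phi_0(P_i),\phi_1(P_i),\phi_2(P_i)$ from the proof of Corollary~\ref{stableandrecogjump}, each of which is $\Sigma_1$ (or $\Pi_1$) and hence absolute for the \emph{first} stable level $L_\sigma$ already, not only for $L_{\sigma_1}$; so $0^r_M$ is definable over $L_\sigma$ and lies in $L_{\sigma+1}$. (The indexing of Corollary~\ref{stableandrecogjump} is misleading here; what is actually proved there, and in Theorem~\ref{rjumpnonrec}, is only that $0^r_M\notin L_\sigma$.) Since your bound ``everything $\preceq_M g$ lies in $L_{\sigma_1}$'' is satisfied by $0^r_M$ itself, it cannot separate the degrees. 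A related overclaim is $\alpha_M^{0^r_M}>\sigma_1$: for ITRMs this says $\omega_\omega^{\text{CK},0^r_M}>\sigma_1$, which fails because $0^r_M\in L_{\sigma+1}$ forces the $0^r_M$-admissibles above $\sigma$ to be just the ordinary admissibles above $\sigma$, and $\sigma_1$ is recursively inaccessible. Fortunately only $\alpha_M^{0^r_M}>\sigma$ is needed for the step $g\preceq_M 0^r_M$, and that much is true.

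The paper avoids the whole issue by separating via the \emph{characteristic ordinal} rather than constructibility level: it shows $\alpha^{0^r_M}_M>\sigma$ (by $M$-computing from $0^r_M$ a well-order of type $\sigma$), then uses genericity to get $\sigma^g=\sigma$, and finally argues that for any $z\leq^r_M g$ and any halting $M$-program $Q^z$ the $\Sigma_1(g)$-statement $\exists a\,(P^{a\oplus g}\downarrow=1\wedge Q^a\downarrow)$ reflects to $L_{\sigma^g}[g]=L_\sigma[g]$, bounding all halting times below $\sigma$. Thus every $y\preceq_M g$ has $\alpha^y_M\leq\sigma<\alpha^{0^r_M}_M$, which gives $0^r_M\not\preceq_M g$ directly. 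If you want to salvage your ``location in $L$'' approach, the right barrier is $L_\sigma[g]$, not $L_{\sigma_1}$, and you would then need the additional (true, but not immediate) fact that $0^r_M\notin L_\sigma[g]$.
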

\begin{proof}
Let $x$ be the $<_{L}$-minimal Cohen-generic real over $L_{\sigma+1}$. We claim that $x\notin [0]^{r}_{M}$: For if it was, then $x$ would, by our remark above, be recursive in a recognizable real,
but all recognizable real numbers and all real numbers recursive in recognizable real numbers are contained in $L_{\sigma}$, which $x$ clearly is not. 

Second, we need to see that $0^{r}_{M}\notin [x]^{r}_{M}$. We claim that $\alpha^{0^{r}_{M}}_{M}>\sigma=\sigma^{x}\geq\alpha^{y}_{M}$ for every $y\in [x]^{r}_{M}$, which suffices. 

For the first inequality, note that a real number coding a well-ordering of order-type $\sigma$ is $M$-computable from $0^{r}_{M}$. To see this, first split $\omega$ into $\omega$ many disjoint infinite portions $(A_{i})_{i\in\omega}$
in some effective way and declare any element of $A_{i}$ to be smaller than any element of $A_{j}$ for $i<j$. The $A_{i}$ are then ordered as follows:
Using $0^{r}_{M}$, search for the $i$th $M$-program $Q$ such that $Q$ recognizes a code for a well-ordering. `There is some $x$ that codes a well-ordering such that $Q^{x}\downarrow=1$' is a $\Sigma_1$-statement
whose truth value can be evaluated using $0^{r}_{M}$ by Theorem \ref{recjumpsigma1}. By the same argument, every bit of the real number $c_{Q}$ recognized by $Q$ can be computed from $0^{r}_{M}$. Now order $A_{i}$
in the way coded by $c_{Q}$. The resulting real number will code a well-ordering whose order-type is the ordinal sum of all ordinals with a recognizable code, which is $\sigma$.

That $\sigma^{x}=\sigma$ follows from the genericity of $x$: Let $P$ be an OTM-program such that $P^{x}$ halts in $\alpha$ many steps. By the forcing theorem for admissible sets (see e.g. Lemma 32 of \cite{CS}),
there is a condition $p\subseteq x$ such that $p\Vdash`P^{x}$ halts in $\alpha$ many steps'. Hence $L\models\exists{\alpha}(p\Vdash`P^{x}$ halts in $\alpha$ many steps'), which is a $\Sigma_{1}$-statement and hence
must become true already in $L_{\sigma}$. Thus, we must have $\alpha<\sigma$ and $\sigma$ is the supremum of OTM-halting times in the oracle $x$, so $\sigma^{x}=\sigma$.

For the second inequality, suppose $z$ is such that $z\leq^{r}_{M}x$ and $y$ is primitive recursive in $z$. Clearly, we must have $\alpha^{y}_{M}\leq\alpha^{z}_{M}$, so it suffices to see that
$\alpha^{z}_{M}\leq\sigma$. Let $P$ be a program that recognizes $z$ relative to $x$. Then $\exists{a}P^{a\oplus x}\downarrow=1$ is a $\Sigma_{1}$-statement in the parameter $x$ which, for $x\in L$, is absolute between $V$ and $L$ by
Shoenfield absoluteness and moreover, by definition of $\sigma^{x}$, either becomes true in some $L_{\alpha}[x]$ with $\alpha<\sigma^{x}$ or is false. As it is true by assumption, we must have $z\in L_{\sigma^{x}}[x]=L_{\sigma}[x]$. Now
let $Q$ be an $M$-program such that $Q^{z}$ halts. Then $\exists{a}\exists{\alpha}[P^{a\oplus x}\downarrow=1\wedge (P^{a}\downarrow)^{L_{\alpha}[a]}]$ is again a $\Sigma_{1}$-statement in the parameter $x$, which is true by assumption
and must hence hold in some $L_{\beta}[a]$ with $\beta<\sigma^{x}$. Hence the halting times of $M$-programs in the oracle $z$ are indeed majorized by $\sigma^{x}=\sigma$, as desired.

Finally, we show that $x\leq^{r}_{M} 0^{r}_{M}$. This, together with the preceeding, shows that the strict inequality holds for such an $x$ and hence that $x$ is as desired.
But as we saw above that $\sigma<\alpha_{M}^{0^{r}_{M}}$, we have $L_{\sigma+\omega}\subseteq L_{\alpha_{M}^{0^{r}_{M}}}[0^{r}_{M}]$. Now, the $\Sigma_{1}$-Skolem hull of $\emptyset$ in $L_{\sigma}$ will
be an $L$-level reflecting every $\Sigma_{1}$-statement that holds in $L_{\sigma}$ and hence, by definition of $\sigma$, be isomorphic to $L_{\sigma}$. Thus, a surjection of $\omega$ onto $L_{\sigma}$ is definable
over $L_{\sigma}$ and hence $L_{\sigma+1}$ is countable in $L_{\sigma+2}$. Thus, a real number $y$ that is Cohen-generic over $L_{\sigma+1}$ is contained in $L_{\sigma+3}\subseteq L_{\alpha_{M}^{0^{r}_{M}}}[0^{r}_{M}]$,
so the $<_{L}$-smallest such $y$ is in particular $M$-computable from $0^{r}_{M}$ and hence $M$-recognizable from $0^{r}_{M}$.


\end{proof}

We conclude by observing that the structure of $M$-recognizability degrees turns out to depend heavily on the set-theoretical background:

\begin{thm}{\label{recdeginL}}
Let $M\in\{\text{ITRM},\text{ITTM},\text{OTM}\}$. Assume that $V=L$. Then the $M$-recognizability degrees are linearly ordered by the ordering induced by the canonical well-ordering $<_{L}$ of $L$.
\end{thm}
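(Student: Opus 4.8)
The plan is to prove that, under $V=L$, the set $\mathrm{HR}(y):=\{x:x\preceq_{M}y\}$ of reals hereditarily $M$-recognizable from $y$ is exactly $\mathbb{R}\cap L_{\sigma^{y}}$, i.e.\ a $<_{L}$-initial segment of the reals. Granting this, the theorem is almost immediate: for any two reals $x,y$ with, say, $\rho(x)\le\rho(y)$ (where $\rho(w)$ denotes the $L$-level at which $w$ first appears), we have $\rho(x)\le\rho(y)<\sigma^{y}$ because $y\in L_{\sigma^{y}}$, hence $x\in L_{\sigma^{y}}$ and so $x\preceq_{M}y$. Thus any two reals are $\preceq_{M}$-comparable, $\preceq_{M}$ is a total preorder, and the order it induces on the $\asymp_{M}$-classes is linear and is governed entirely by the $<_{L}$-position of the reals, i.e.\ it is the order $<_{L}$ induces on degrees via least representatives.

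For the forward inclusion $\mathrm{HR}(y)\subseteq\mathbb{R}\cap L_{\sigma^{y}}$, I would first treat a single step $z\le^{r}_{M}y$, witnessed by a program $P$. Then ``$\exists a\,P^{a\oplus y}\downarrow=1$'' is a $\Sigma_{1}$-statement in the parameter $y$ that holds in $L$ (witnessed by $z$); since $\sigma^{y}$ is the least $y$-stable ordinal we have $L_{\sigma^{y}}[y]\prec_{\Sigma_{1}}L$, so the statement already holds in $L_{\sigma^{y}}[y]=L_{\sigma^{y}}$ (using $V=L$ and $y\in L_{\sigma^{y}}$), witnessed by some $a_{0}\in L_{\sigma^{y}}$. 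By absoluteness of computations $P^{a_{0}\oplus y}\downarrow=1$ holds truly, and as $P$ recognizes $z$ we get $a_{0}=z$, so $z\in L_{\sigma^{y}}$. Next I would note that $\sigma^{y}$ is itself $z$-stable for every $z\in L_{\sigma^{y}}$, since $L_{\sigma^{y}}[z]=L_{\sigma^{y}}\prec_{\Sigma_{1}}L$ with $z$ an available parameter, whence $\sigma^{z}\le\sigma^{y}$ and $L_{\sigma^{z}}\subseteq L_{\sigma^{y}}$. Since each $\le^{r}_{M}$-step keeps one inside $L_{\sigma^{(\cdot)}}$, induction along a witnessing chain $x=z_{0}\le^{r}_{M}\dots\le^{r}_{M}z_{n}=y$ of Definition \ref{recclos} then gives $x\in L_{\sigma^{y}}$.

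For the converse $\mathbb{R}\cap L_{\sigma^{y}}\subseteq\mathrm{HR}(y)$, take $x\in L_{\sigma^{y}}$. By the relativized form of Lemma \ref{barwise}(3), $x$ is $\Sigma_{1}$-definable in $L$ from the single parameter $y$, say $x$ is the unique $v$ with $L\models\phi(v,y)$ for a $\Sigma_{1}$ formula $\phi$. Put $\psi:=\exists v\,\phi(v,y)$ and let $\delta_{0}$ be least with $L_{\delta_{0}}[y]\models\psi$; by $y$-stability $\delta_{0}<\sigma^{y}$, and $\Sigma_{1}$-upward-absoluteness together with uniqueness of the witness force $x\in L_{\delta_{0}}[y]=L_{\delta_{0}}$. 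Exactly as in the proof of Lemma \ref{recogsup}, relativized to the parameter $y$, the $<_{L}$-least real $c$ coding $L_{\delta_{0}}[y]$ is $M$-recognizable from $y$: a machine can, relative to any oracle $w\oplus y$, verify that $w$ is the $<_{L}$-least code of an $L[y]$-level satisfying $\psi$, and this verification terminates because $w=c$ supplies a structure of ordinal height $\delta_{0}$, so the computation lies inside the reach $\alpha^{c\oplus y}_{M}>\delta_{0}$ of the machine. From $c$ the witness $x$ is then $M$-computable (search the coded structure for the unique $v$ satisfying $\phi(\cdot,y)$ and decode it), hence $x\le^{r}_{M}c$; chaining with $c\le^{r}_{M}y$ yields $x\preceq_{M}y$.

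Assembling these gives $\mathrm{HR}(y)=\mathbb{R}\cap L_{\sigma^{y}}$, a $<_{L}$-initial segment; as $y$ varies these segments are nested, each contains its own index $y$, and the comparability argument of the first paragraph shows $\preceq_{M}$ is total, so passing to $\asymp_{M}$-classes produces the claimed linear order. I expect the main obstacle to be the converse inclusion, specifically the claim that the level-code $c$ is genuinely recognizable from $y$ for each of the three machine models: one must check that the recognizing program halts on \emph{all} oracles and correctly isolates the $<_{L}$-least code, which amounts to well-foundedness checking and $<_{L}$-minimality testing for coded $L[y]$-levels. This is precisely the content of the relativized form of Lemma \ref{recogsup} and of the lost-melody constructions underlying it, which cover ITRMs, ITTMs and parameter-free OTMs uniformly, so I would lean on that rather than re-derive it.
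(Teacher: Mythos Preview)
Your proof is correct, but it takes a substantially different and more elaborate route than the paper's. The paper does not go through the stable ordinal $\sigma^{y}$ or attempt to characterize $\mathrm{HR}(y)$ precisely. Instead it uses only the constructibility rank: for each real $x$ let $\alpha_{x}$ be the least ordinal with $x\in L_{\alpha_{x}}$. The paper observes that the $<_{L}$-minimal code of $L_{\alpha_{x}}$ is $M$-recognizable from $x$ (this is the relativized lost-melody construction), and that from such a code every real in $L_{\alpha_{x}}$ is $M$-computable and hence hereditarily $M$-recognizable from $x$. Comparability is then immediate: for any $x,y$ either $\alpha_{x}\le\alpha_{y}$ (so $x\in L_{\alpha_{y}}$ and $x\preceq_{M}y$) or conversely; and $x\le_{L}y$ implies $\alpha_{x}\le\alpha_{y}$. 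This is a three-line argument once the lost-melody ingredient is granted.

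What your approach buys is the exact identification $\mathrm{HR}(y)=\mathbb{R}\cap L_{\sigma^{y}}$, which is strictly more information than the theorem requires and is interesting in its own right. The cost is the extra machinery: you need the relativized form of Lemma~\ref{barwise}(3), the observation that $\sigma^{z}\le\sigma^{y}$ whenever $z\in L_{\sigma^{y}}$, and the inductive step along chains. One small wrinkle: your assertion $L_{\delta_{0}}[y]=L_{\delta_{0}}$ presupposes $y\in L_{\delta_{0}}$, which is not automatic from the definition of $\delta_{0}$ as the least level where $\psi$ holds; it is harmless to repair this by taking $\delta_{0}$ to be the least ordinal above $\rho(y)$ with $L_{\delta_{0}}\models\psi$, since that is the structure you actually want to code anyway. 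With that adjustment your argument goes through for all three machine types, leaning (as you note) on the relativized lost-melody theorem for the recognizability of the level code.
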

\begin{proof}
Relative to $x\subseteq$, we can recognize the $<_{L}$-minimal code of the minimal $L$-level $L_{\alpha_{x}}$ containing $x$. From a code for $L_{\alpha_{x}}$, every real $z\in L_{\alpha_{x}}$
is computable and hence recognizable. Hence every real in $L_{\alpha_{x}}$ is heriditarily recognizable from $x$.

Now, for any $x,y\in L$, we have either $\alpha_{x}\leq\alpha_{y}$ or $\alpha_{x}>\alpha_{y}$, i.e. $x\in L_{\alpha_{y}}$ or $y\in L_{\alpha_{x}}$.
Hence one of them is heriditarily $M$-recognizable from the other. Moreover, if $x\leq_{L}y$, then $\alpha_{x}\leq\alpha_{y}$, so $x$ is heriditarily $M$-recognizable from $y$.
\end{proof}

On the other hand:

\begin{prop}{\label{recdeggenext}}
Let $M\in\{\text{ITRM},\text{ITTM},\text{OTM}\}$. If $x,y$ are mutually Cohen-generic over $L$, then neither $x\preceq_{M}y$ nor $y\preceq_{M}x$.
\end{prop}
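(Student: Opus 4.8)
The plan is to establish $x\not\preceq_M y$; since mutual genericity is symmetric in $x$ and $y$, the identical argument with the roles reversed will then give $y\not\preceq_M x$. The engine of the whole proof is a single observation, which I will call the key claim: for all reals $u,v$, if $u\leq^r_M v$ then $u\in L[v]$. For ITTMs this is immediate from Lemma \ref{ITTMrelrecog}, which places any $u$ recognizable relative to $v$ inside $L_{\lambda^{u\oplus v}}[v]\subseteq L[v]$. For parameter-free OTMs it is exactly the content of the `Other Machines' subsection above, where recognizability coincides with computability and the $v$-OTM-computable reals are precisely the elements of $L[v]$. For ITRMs I would run the Shoenfield argument directly: if $P$ recognizes $u$ relative to $v$, then $\exists w\,P^{w\oplus v}\!\downarrow=1$ is $\Sigma^1_2$ in $v$ (a halting ITRM-computation terminates in $<\omega_\omega^{\text{CK},w\oplus v}$ steps by Theorem \ref{hp} and is therefore coded by a real, and `$c$ codes a halting computation of $P$ in oracle $w\oplus v$ with output $1$' is arithmetical in $c,w,v$), so by Shoenfield absoluteness relativized to $v$ there is a witness $w\in L[v]$; absoluteness of computations gives $P^{w\oplus v}\!\downarrow=1$ in $V$, and uniqueness of the recognized real forces $w=u$, whence $u\in L[v]$.

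With the key claim in hand, I would argue by contradiction. Suppose $x\preceq_M y$ and fix a witnessing chain $x=z_0,z_1,\dots,z_n=y$ with $z_i\leq^r_M z_{i+1}$ for each $i$. I would then prove by downward induction on $i$ that $z_i\in L[y]$. The base case $z_n=y\in L[y]$ is trivial. For the inductive step, assume $z_{i+1}\in L[y]$; since $z_i\leq^r_M z_{i+1}$, the key claim yields $z_i\in L[z_{i+1}]\subseteq L[y]$. Taking $i=0$ gives $x\in L[y]$.

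Finally I would invoke the standard product-forcing fact that for reals $x,y$ that are mutually Cohen-generic over $L$ one has $L[x]\cap L[y]=L$. Combined with the previous step this gives $x\in L[x]\cap L[y]=L$, contradicting the basic fact that a Cohen-generic real over $L$ never lies in $L$ (the conditions forcing $\dot x\neq\check r$ are dense for every $r\in L\cap{}^\omega 2$). This refutes $x\preceq_M y$, and the symmetric argument refutes $y\preceq_M x$, completing the proof.

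I expect the only genuine obstacle to be the uniform justification of the key claim $u\leq^r_M v\Rightarrow u\in L[v]$. For ITTMs and OTMs it is simply quoted from results already available in the paper, but in the ITRM case one must verify carefully that halting of an ITRM-computation in a fixed oracle is genuinely arithmetical in a real coding the computation, so that the existential statement really is $\Sigma^1_2$ and relativized Shoenfield absoluteness applies. The remaining ingredients—the downward induction along the chain and the citation of the mutual-genericity intersection lemma—are routine.
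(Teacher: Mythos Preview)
Your proof is correct and follows essentially the same line as the paper's: both arguments reduce to showing $x\in L[y]$ via an absoluteness argument and then derive a contradiction from mutual genericity. The differences are presentational. The paper short-circuits your chain induction by invoking the remark following Definition~\ref{recclos} that $x\preceq_{M}y$ already implies the existence of a single $z\leq^{r}_{M}y$ with $x$ primitive recursive in $z$, so only one application of Shoenfield-type absoluteness is needed; you instead induct down the chain, which has the virtue of not depending on that remark (attributed in the paper to work in preparation). Your model-by-model justification of the key claim $u\leq^{r}_{M}v\Rightarrow u\in L[v]$ is also more explicit than the paper's blanket ``$x$ is $\Sigma_{1}$-definable from $y$''. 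Finally, for the contradiction the paper simply notes that $x$ is Cohen-generic over $L[y]$ and hence $x\notin L[y]$; your detour through $L[x]\cap L[y]=L$ is a correct but unnecessary extra step, since mutual Cohen genericity already gives $x\notin L[y]$ directly.
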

\begin{proof}
Assume for a contradiction that $x\preceq_{M}y$. Let $P$ be a program that recognizes a real $z$ relative to $y$ such that $x$ is primitive recursive in $x$ which exists by the remark following Theorem \ref{recclos}.
Then $x$ is $\Sigma_1$-definable from $y$ and hence, by Shoenfield absoluteness, we get $x\in L[y]$, which contradicts the assumption that $x$ is Cohen-generic over $L[y]$.
\end{proof}

Consequently, the existence of $\preceq_{M}$-incomparable $M$-degrees of recognizability is independent of ZFC for $M\in\{\text{ITRM},\text{ITTM},\text{OTM}\}$. The study of the structure of
the degrees of recognizability will hence need to work under set-theoretical extra assumptions to obtain substantial results.

\section{Conclusion and Further Work}
It is natural to ask what happens when we replace the condition of non-meagerness by the condition of positive Lebesgue measure in results like \ref{ITRMcomeager} or \ref{ITTMcomeager}.
Hence, we ask:

\medskip

\textbf{Question}: Suppose that $Y\subseteq[0,1]$ is a set of positive Lebesgue measure, and let $x\subseteq\omega$ be such that $x$ is ITRM-recognizable (uniformly or not) relative to every
$y\in Y$. Does it follow that $x$ is ITRM-recognizable?

\medskip

\textbf{Question}: Same question for ITTM-recognizability instead of ITRM-recognizability.

This and other related topics can be dealt with using random forcing
over models of $KP$, which will be treated in future work with Philipp Schlicht. 

Another possible topic to pursue is relativized recognizability for $\alpha$-Turing machines and $\alpha$-register machines, both of the resetting
and the unresetting type. Introductions to and various results about computational strength, computability from random oracles as well as on recognizability for these machines can be found in 
\cite{KS}, \cite{Rin}, \cite{Ca} and \cite{CS}. In particular, we ask:

\medskip

\textbf{Question}: For which $\alpha$ is it true that the recognizability by an $\alpha$-Turing machine or an $\alpha$-register machine of a real number $x$ relative to all elements of a set $Y$ of real numbers that is `large' 
(e.g. in the sense of being comeager or of Lebesgue measure $1$)?

\medskip

\textbf{Question}: What is the strength of the recognizable jump of $0$ for $\alpha$-Turing machines and $\alpha$-register machines? What does the recognizable degree structure for such machines look like?

\medskip

\textbf{Question}: Characterize the recognizable jump of $0$ for Ordinal Turing Machines without parameters, possibily analogous to Theorem \ref{sigma1recjump}. In particular, is it provable in ZFC that $0^{r}_{\text{OTM}}$
is constructible?

\section{Acknowledgements}

We thank Philipp Schlicht for a discussion on Corollary \ref{MAnonuniform} (and the remark following it) as well as various helpful comments on the presentation of the proof of Theorem \ref{ITRMcomeager}
and discussions during which Lemma \ref{whomgennonrec} and Theorem \ref{laver} were suggested, along with parts of their proofs. We also thank the two anonymous referees for suggesting various 
improvements on our presentation.

\section{Appendix: The relativized Jensen-Karp-Theorem}

In this section, we prove a relativization and a slight strengthening of the Jensen-Karp theorem (see section $5$ of \cite{JK}). The theorem says that, when $\phi$ is a $\Sigma_{1}$-statement with real parameters in $L_{\alpha}$ and $\alpha$
is a limit of admissible ordinals, then $V_{\alpha}\models\phi$ if and only if $L_{\alpha}\models\phi$. In the proof of our theorem concerning ITTMs above, we made use of the following relativization:

\begin{thm}{\label{parameterJK}}
Let $x\subseteq\omega$, $\phi$ a $\Sigma_{1}$-formula in the parameter $x$, possibly with further real parameters in $L_{\omega_{\omega}^{\text{CK},x}}[x]$.
Then $\phi$ is absolute between $V_{\omega_{\omega}^{\text{CK},x}}$ and $L_{\omega_{\omega}^{\text{CK},x}}[x]$.
\end{thm}

Moroever, we have the following strengthening:

\begin{thm}{\label{strongJK}}
Let $\phi$ be a $\Sigma_{1}$-formula, possibly with real parameters $z$ in $L_{\alpha}$. Let $\beta^{+}_{z}$ denote the smallest $z$-admissible ordinal $>\beta$.
Then $\phi$ is absolute between $V_{\alpha^{++}_{z}}$ and $L_{\alpha^{++}_{z}}$.
\end{thm}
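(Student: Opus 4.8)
The plan is to treat the two inclusions separately. The direction $L_{\alpha^{++}_{z}}\models\phi\Rightarrow V_{\alpha^{++}_{z}}\models\phi$ is routine upward absoluteness: writing $\phi=\exists w\,\psi(w,\bar p)$ with $\psi$ a $\Delta_{0}$-formula and parameters $\bar p=(z,\bar a)\in L_{\alpha}$, one notes that $L_{\alpha^{++}_{z}}$ is transitive of rank $\alpha^{++}_{z}$, hence $L_{\alpha^{++}_{z}}\subseteq V_{\alpha^{++}_{z}}$, so a witness in the former lies in the latter and $\psi$, being $\Delta_{0}$, is absolute between the two transitive structures. All of the content is in the converse.

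For the converse I would assume $V_{\alpha^{++}_{z}}\models\exists w\,\psi(w,\bar p)$, fix a witness $w$ of rank $<\alpha^{++}_{z}$, and aim to produce a constructible witness lying already in $L_{\alpha^{++}_{z}}$. The engine is the relativized Jensen--Karp argument behind Theorem \ref{parameterJK} (and \cite{JK}): from the set-witness $w$ one extracts, by the condensation/compactness analysis over an admissible set containing the parameters, a well-founded model of a suitable $\Sigma_{1}$-theory (enough $\mathrm{KP}$, $V=L$, the atomic diagram of $\mathrm{tc}(\bar p)$ and $\exists w\,\psi$), whose transitive collapse is an $L$-level $L_{\eta}$ in which $\bar p$ is correctly placed, so that $L_{\eta}\models\exists w\,\psi(w,\bar p)$. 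Since $z$ is a real lying in $L_{\alpha}$, $z$-admissibility above $\alpha$ agrees with ordinary admissibility and no genuine relativization of the $L$-hierarchy is needed; in particular the parameters $\bar p$ are all available already at the first $z$-admissible level $L_{\alpha^{+}_{z}}$.

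The new point, and the step I expect to be the main obstacle, is the sharp bound $\eta<\alpha^{++}_{z}$. In the proof of Theorem \ref{parameterJK} the hypothesis that $\alpha$ be a limit of ($z$-)admissibles is used only to guarantee, for parameters ranging over all of $L_{\alpha}$, arbitrarily high admissible initial segments below $\alpha$ in which to run the construction. When the parameters are fixed inside $L_{\alpha}$, I would instead run the argument over the single admissible set $L_{\alpha^{+}_{z}}$: the relevant theory is then $\Sigma_{1}$-definable over $L_{\alpha^{+}_{z}}$, and the collapse level $\eta$ of the resulting model is controlled by the next admissible ordinal, i.e. $\eta<\alpha^{++}_{z}$. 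Equivalently, the least $\eta$ with $L_{\eta}\models\exists w\,\psi(w,\bar p)$ is $\Sigma_{1}$-definable from parameters present in $L_{\alpha^{+}_{z}}$, and were it to reach $\alpha^{++}_{z}$ this would yield a $\Sigma_{1}$-definable map cofinal into the admissible ordinal $\alpha^{++}_{z}$, contradicting admissibility; this is the relative analogue of the identification of $\Sigma_{1}$-fixed ordinals below the next admissible implicit in Lemma \ref{barwise}.

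The delicate part of carrying this out is verifying that the model delivered by the compactness/condensation step really does have well-founded part collapsing below the second admissible, rather than below some higher limit of admissibles as in the unlocalized theorem, and that the early appearance of the $V$-witness (rank $<\alpha^{++}_{z}$) is what pins the constructible witness to the same region. Once that level-tracking is in place, $L_{\alpha^{++}_{z}}\models\phi$ follows and, combined with the easy direction, gives the claimed absoluteness between $V_{\alpha^{++}_{z}}$ and $L_{\alpha^{++}_{z}}$.
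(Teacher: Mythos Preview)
Your core strategy matches the paper's: the Jensen--Karp construction (the Main Lemma proved below) is primitive recursive in the parameters $z$, $\tau$ (the ordinal height of the initial transitive witness), and a single $z$-admissible ordinal $\alpha^{+}_{z}>\tau$; since admissible sets are closed under primitive recursive set functions in parameters they contain, the resulting transitive model $t'$ lies in $L_{\alpha^{++}_{z}}$, and $\Delta_{0}$-absoluteness then yields $L_{\alpha^{++}_{z}}\models\phi$. The paper's entire proof is precisely this one-line observation about where the prim construction lands.

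Two points of confusion in your write-up are worth flagging. First, the output of the construction is not an $L$-level $L_{\eta}$ but merely a transitive set $t'\in L_{\alpha^{++}_{z}}$; you cannot add $V=L$ (or KP) to the theory, because the only model available at the outset to establish consistency of $\mathcal{T}_{0}$ is the transitive closure in $V$ of the witness and parameters, which need not satisfy $V=L$. This is harmless---$t'\in L_{\alpha^{++}_{z}}$ together with $(t',\in)\models\phi$ already suffices---but it means your talk of a ``collapse level $\eta$'' is misplaced. Second, your alternative argument---that the least $\eta$ with $L_{\eta}\models\phi$ is $\Sigma_{1}$-definable from parameters in $L_{\alpha^{+}_{z}}$ and hence bounded below $\alpha^{++}_{z}$ by admissibility---presupposes that such an $\eta$ exists at all, i.e., that $\phi$ already reflects into $L$. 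That is exactly what the construction is meant to deliver, so this ``equivalently'' is circular rather than an independent route.
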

\begin{proof}
 (Sketch) This will follow from the construction below as the primitive-recursive definition of the transitive $\in$-model $(t,\in)$ of $\{\phi\}$ defined there needs only the next admissible
$\alpha^{+}_{z}$ as a parameter and can hence be carried out in $L_{\alpha^{++}_{z}}$; so $L_{\alpha^{++}_{z}}$ contains a transitive model of $\phi$, and by upwards absoluteness of $\Sigma_{1}$-formulas,
$L_{\alpha^{++}_{z}}\models\phi$.
\end{proof}

\begin{corollary}{\label{recognizablespositions}}
Let $x$ be ITRM-recognizable by an ITRM-program $P$ using $n$ registers. Then $x\in L_{\omega_{n+2}^{\text{CK},x}}$.
\end{corollary}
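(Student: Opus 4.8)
The plan is to mimic the proof of Lemma \ref{retractrecog} for ITTMs, replacing the halting-time bound $\lambda^{x}$ by the ITRM-bound $\omega_{n+1}^{\text{CK},x}$ coming from Theorem \ref{hp}, and replacing the ``limit of admissibles'' form of the Jensen--Karp theorem by the sharper Theorem \ref{strongJK}. Since $P$ uses $n$ registers and, as $P$ recognizes $x$, we have $P^{x}(0)\downarrow=1$, Theorem \ref{hp} bounds the halting time $\tau$ of $P^{x}$ by $\tau<\omega_{n+1}^{\text{CK},x}$. The decisive observation is that the statement $\psi:\equiv\exists y\,(P^{y}\downarrow=1)$ is a \emph{parameter-free} $\Sigma_{1}$-statement: $P$ is a fixed finite object, and the existence of a halting computation with output $1$ is $\Sigma_{1}$ in the witnessing computation. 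Being parameter-free, $\psi$ lets me apply Theorem \ref{strongJK} with empty parameter set $z=\emptyset$, so that the relevant ``$+$''-operation there is just ordinary admissibility.

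First I would fix the witness data and locate it at low rank. The real $x$ has rank $<\omega$, and the halting computation $c$ of $P^{x}$ is a sequence of states indexed by ordinals $\le\tau$, hence a set of rank $<\tau+\omega$. Putting $\alpha:=\tau+\omega$, we have $\alpha<\omega_{n+1}^{\text{CK},x}$ (as $\omega_{n+1}^{\text{CK},x}$ is a limit ordinal above $\tau$) and $x,c\in V_{\alpha}$, so $V_{\alpha}\models\psi$, and a fortiori $V_{\alpha^{++}}\models\psi$, where $\alpha^{++}$ denotes the second admissible ordinal above $\alpha$. By Theorem \ref{strongJK} (with $z=\emptyset$, whose hypothesis ``$z\in L_{\alpha}$'' is vacuous), $\psi$ is absolute between $V_{\alpha^{++}}$ and $L_{\alpha^{++}}$, so $L_{\alpha^{++}}\models\psi$. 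Hence some $y\in L_{\alpha^{++}}$ satisfies $L_{\alpha^{++}}\models P^{y}\downarrow=1$; by absoluteness of computations the witnessing computation is genuine, so $P^{y}\downarrow=1$ holds in $V$, and since $P$ recognizes $x$ this forces $y=x$. Therefore $x\in L_{\alpha^{++}}$.

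It remains to verify $\alpha^{++}\le\omega_{n+2}^{\text{CK},x}$, which I expect to be the main (if essentially routine) obstacle, as it is the only place the precise indexing is used. Here I would invoke the standard fact that every $x$-admissible ordinal is admissible (the $x$-free levels $L_{\gamma}$ are $\Delta_{1}$-definable inside $L_{\gamma}[x]$, so $KP$ descends from $L_{\kappa}[x]$ to $L_{\kappa}$; this is the $\emptyset$-versus-$x$ instance of the monotonicity of admissibility used implicitly in the proof of Lemma \ref{genhaltingtimes}). Consequently both $\omega_{n+1}^{\text{CK},x}$ and $\omega_{n+2}^{\text{CK},x}$ are admissible, and both exceed $\alpha=\tau+\omega$; they thus provide two distinct admissible ordinals lying in the interval $(\alpha,\omega_{n+2}^{\text{CK},x}]$, so the second admissible above $\alpha$ satisfies $\alpha^{++}\le\omega_{n+2}^{\text{CK},x}$. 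Combining this with $x\in L_{\alpha^{++}}$ yields $x\in L_{\omega_{n+2}^{\text{CK},x}}$, as claimed.
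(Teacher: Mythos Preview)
Your proof is correct and follows essentially the same route as the paper's: bound the halting time of $P^{x}$ by $\tau<\omega_{n+1}^{\text{CK},x}$ via Theorem~\ref{hp}, apply the strengthened Jensen--Karp Theorem~\ref{strongJK} to the parameter-free $\Sigma_{1}$-statement $\psi\equiv\exists y(P^{y}\downarrow=1)$ to pull a witness into an $L$-level indexed by two admissibles above $\tau$, and identify that witness as $x$ by absoluteness of computations. The only cosmetic differences are that the paper applies Theorem~\ref{strongJK} with $z=x$ (writing $\tau^{++}_{x}=\omega_{n+2}^{\text{CK},x}$ directly), whereas you take $z=\emptyset$ and then separately argue $\alpha^{++}\le\omega_{n+2}^{\text{CK},x}$ via the observation that $x$-admissibles are admissible; your version is arguably cleaner, since $\psi$ really is parameter-free and the hypothesis of Theorem~\ref{strongJK} is then vacuously satisfied.
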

\begin{proof}
Since $P$ recognizes $x$, $P^{y}$ will halt for every oracle $y$. As $P$ uses only $n$ registers, we have that for every $y\subseteq\omega$, $P^{y}$ will run for less than $\omega_{n+1}^{\text{CK},x}$ many steps.
Hence the statement $P^{y}\downarrow=1$ is absolute between $L_{\omega_{n+1}^{\text{CK},y}}$ and $V_{\omega_{n+1}^{\text{CK},y}}$, provided that $y\in L_{\omega_{n+1}^{\text{CK},y}}$.
Let $\tau$ be the halting time of $P^{x}$, so $\tau<\omega_{n+1}^{\text{CK},x}$. By Theorem \ref{strongJK}, the parameter-free $\Sigma_{1}$-statement $\psi\equiv$`There is $y\subseteq\omega$ such that $P^{y}\downarrow=1$'
is absolute between $L_{\tau^{++}_{x}}=L_{\omega_{n+2}^{\text{CK},x}}$ and $V_{\tau^{++}_{x}}=V_{\omega_{n+2}^{\text{CK},x}}$. As $P$ recognizes $x$ and $x\in V_{\omega+1}\subseteq V_{\omega_{n+2}^{\text{CK},x}}$, by the bound on the halting times 
(see Theorem \ref{hp})
and absoluteness of computations, we have $V_{\omega_{n+2}^{\text{CK},x}}\models\psi$, so $L_{\omega_{n+2}^{\text{CK},x}}\models\psi$, so $x\in L_{\omega_{n+2}^{\text{CK},x}}$ by definition of recognizability and absoluteness of computations.
\end{proof}

Both theorems can be combined in the obvious way. Though we use ITRMs instead of Jensen and Karp's primitiv recursive set functions, the proofs work very much along the lines of Jensen's and Karp's proof of the second Theorem in section $5$ of \cite{JK};
it is for the sake of completeness that we give the details of the proof of Theorem \ref{parameterJK} here.

We now prove Theorem \ref{parameterJK}.

\begin{proof}
                                                                                                                                                                                                                                                                                                                                                                                                                                                                                                                                           
We show that, if $x\subseteq\omega$ and $\phi(x)\equiv\exists{y}\psi(x,y)$ with $\psi$ a $\Delta_{0}$-formula is such that $V_{\omega_{\omega}^{\text{CK},x}}\models\phi(x)$, then $L_{\omega_{\omega}^{\text{CK,x}}}[x]\models\phi(x)$.
The construction we use is due to Jensen and Karp (\cite{JK}).

The idea is to compute with an ITRM, using the oracle $x$, a (code for a) transitive set $M\ni x$ such that $M\models\phi(x)$. To this end, we extend $\phi(x)$ to an appropriate complete theory in which every
existential statement is witnessed by a constant; then, we use the Henkin model construction, which can be effectivized on an ITRM. 

Once this is done, the claim follows: As $c$ is ITRM-computable from $x$, we have $c\in L_{\omega_{\omega}^{\text{CK},x}}[x]$. It is then not hard to conclude that $M\in L_{\omega_{\omega}^{\text{CK},x}}[x]$.
As $M\models\phi(x)$, there is some $y\in M$ such that $M\models\psi(x,y)$. As $M$ is transitive, $\psi(x,y)$ holds in the real world. Finally, as $y\in M\in L_{\omega_{\omega}^{\text{CK},x}}[x]$ and 
the latter is transitive, we have $y\in L_{\omega_{\omega}^{\text{CK},x}}[x]$, so $L_{\omega_{\omega}^{\text{CK},x}}[x]\models\phi(x)$, as desired.

We could of course attempt to compute a Henkin-model for $\phi(x)$ right away on an ITRM in the oracle $x$. The problem is that this model might fail to be transitive (or to be isomorphic to a transitive model),
so that the witness it contains for $\phi(x)$ may fail to witness this statement in $V$. Thus -- and this is the crucial point of the Jensen-Karp-proof -- it has to be ensured that the model will be well-founded,
and thus isomorphic to a transitive model. This is the point of the following construction.

As $L_{\omega_{\omega}^{\text{CK},x}}[x]\models\phi(x)$, there is $\beta<\omega_{\omega}^{\text{CK},x}$ such that $L_{\beta}[x]\models\phi(x)$; we pick such a $\beta$, say the minimal one.
Then a code $r_{\beta}$ for $\beta$ is ITRM-computable from $x$. We may thus use $\beta$ freely in the following construction.

We introduce constant symbols $(c_{\iota}:\iota<\beta)$ and another constant symbol $c_{x}$ and form the theory $\mathcal{T}$ using the binary relation symbol $\in$ and the unary function symbol $\text{rk}$ (intended to denote the rank function)
which will contain the following statements:
$\phi(c_{x})$, $c_{i}\in c_{x}$ for $i\in x$, $c_{i}\notin c_{x}$ for $i\notin x$, $c_{\iota}\in c_{\iota^{\prime}}$ for $\iota<\iota^{\prime}\leq\beta$, $c_{\iota}\notin c_{\iota^{\prime}}$ for $\iota^{\prime}\leq\iota\leq\beta$
and $\text{On}(c_{\iota})$ for $\iota\leq\beta$. Moreover, $\mathcal{T}$ contains the axiom of extensionality and statements giving the relevant information about the rank function, namely $\forall{x,y}(x\in y\rightarrow\text{rk}(x)\in\text{rk}(y))$,
$\forall{x}\text{rk}(x)\in\text{On}$ and $\forall{x\in\text{On}}(\text{rk}(x)=x)$.

Clearly, this theory $\mathcal{T}$ is ITRM-computable from $x$; moreover, $\mathcal{T}$ is consistent, as $V_{\beta+1}$ with the standard interpretations of $\in$ and $\text{rk}$ is a model of $\mathcal{T}$.

We now extend, in an ITRM-effective manner, $\mathcal{T}$ to a suitable complete consistent theory $\mathcal{T}^{\prime}$ in which every existential statement is witnessed by one of $\omega$ many new constants $\{d_{i}:i\in\omega\}$,
and which moreover has the property that, if $\mathcal{T}^{\prime}$ contains the statement $\text{On}(a)$ for some $a$, then it also contains the statement $a=c_{\iota}$ for some $\iota\leq\beta$. This latter condition
ensures that the model we obtain in the end will only have `actual' ordinals and thus be well-founded.

Let $\mathcal{L}$ be the first-order language using the function, relation and constant symbols just described. 
We enumerate the $\mathcal{L}$-formulas with one free variable in the order type $\omega$ as $(\psi_{i}:i\in\omega)$.
We now build a tree $\mathcal{B}$ on the set of the closed $\mathcal{L}$-formulas. For all $i\in\omega$, at the $3i$th level, every node of $\mathcal{B}$ will have $\omega$ many immediate successors labelled
$\exists{z}\psi_{i}(z)\rightarrow\psi_{i}(d_{j})$, $j\in\omega$. At the $(3i+1)$th level, we will again have $\omega$ many immediate successors for each node, labelled $\text{On}(d_{i})\rightarrow (d_{i}=c_{\iota})$, $\iota\leq\beta$;
these can be arranged in order type $\omega$ as we have a real number coding $\beta$ available. Finally, at level $3i+2$, we have the $i$th closed formula of $\mathcal{L}$. A finite sequence $\vec{s}$ of formulas will
enter $\mathcal{B}$ if and only if $\mathcal{T}\cup s$ is consistent, which can easily be tested with an ITRM. Thus, $\mathcal{B}$ is ITRM-computable from $x$.

Any infinite branch of $\mathcal{B}$ will be a complete consistent extension of $\mathcal{T}$. We know that such an extension exists, as we can form the elementary hull of $\beta$ in $V_{\beta+1}$, which will result in a countable structure 
in which $d_{i}$ can be interpreted as the $i$th element and all other symbols can be interpreted in the obvious way. To find such a branch with an ITRM given $\mathcal{B}$, we use the
algorithm used on ITRMs to test for well-foundedness of binary relations: $\mathcal{B}$ has an infinite branch if and only if the binary relation defined by $\eta<\eta^{\prime}$ if and only if $\eta$ is a successor of $\eta^{\prime}$ in $\mathcal{B}$
is not well-founded. Moreover, this algorithm can also be used to test whether a given sequence of formulas extends to an infinite branch of $\mathcal{B}$. In this way, always selecting the lexically minimal (leftmost)
continuation of a branch that still extends to an infinite branch of $\mathcal{B}$, we can ITRM-compute an infinite branch of $\mathcal{B}$ from $x$.

Now let $\mathcal{T}^{\prime}$ be a complete and consistent extension of $\mathcal{T}$ as described and consider the model $M$ built from the constant symbols of $\mathcal{L}$ in the usual way: We form equivalence classes
of constants depending on whether $\mathcal{T}^{\prime}$ contains the statement that they are equal and, for two such classes $[d],[d^{\prime}]$, we let $\text{rk}([d])=[d^{\prime}]$ if and only if $\text{rk}(d)=d^{\prime}\in\mathcal{T}^{\prime}$
and $[d]\in[d^{\prime}]$ if and only if $(d\in d^{\prime})$ is contained in $\mathcal{T}^{\prime}$. Clearly, the resulting structure $S$ has no infinite descending $\in$-sequences, as these would translate into an infinite descending $\in$-sequence
among the $c_{\iota}$, which would in turn induce an infinite descending sequence in their indices, i.e. in the ordinals, a contradiction. Moreover, taking the transitive collapse $M$ of $S$, $c_{\iota}$ will be mapped to $\iota$ for all $\iota\leq\beta$,
so in particular, $c_{i}$ will be mapped to $i$ for $i\in\omega$ and thus, by extensionality in $S$, $c_{x}$ will be mapped to $x$. As $S$ satisfies $\phi(x)$ by definition, the same holds for $M$. Thus $M$ is as desired.


It is now possible to ITRM-compute a code for $M$ from $\mathcal{T}^{\prime}$ by implementing the above construction on an ITRM, which is easily possible.

\end{proof}

\end{document}